\def\Xint#1{\mathchoice
{\XXint\displaystyle\textstyle{#1}}%
{\XXint\textstyle\scriptstyle{#1}}%
{\XXint\scriptstyle\scriptscriptstyle{#1}}%
{\XXint\scriptscriptstyle\scriptscriptstyle{#1}}%
\!\int}
\def\XXint#1#2#3{{\setbox0=\hbox{$#1{#2#3}{\int}$ }
\vcenter{\hbox{$#2#3$ }}\kern-.6\wd0}}
\def\avint{\Xint-}
\newcommand{\mres}{%
  \,\raisebox{-.127ex}{\reflectbox{\rotatebox[origin=br]{-90}{$\lnot$}}}\,%
}
\newcommand{\grad}{\nabla}
\newcommand{\laplace}{\Delta}
\renewcommand{\L}{\mathcal{L}}
\renewcommand{\div}{\grad\cdot}
\newcommand{\N}{\mathbf{N}}
\newcommand{\E}{{\mathcal E}}
\newcommand{\A}{{\mathcal A}}
\newcommand{\R}{\mathbf{R}}
\newcommand{\Z}{\mathbf{Z}}
\newcommand{\g}{ \mathsf{g}}
\renewcommand{\P}{\mathcal{P}}
\newcommand{\M}{{\mathcal M}}
\DeclareMathOperator{\id}{id}
\DeclareMathOperator{\spt}{spt}
\def\loc{{\mathrm{loc}}}
\renewcommand{\H}{\ensuremath{\mathcal{H}}}
\newcommand{\eps}{\varepsilon}
\newtheorem{prop}{Proposition}
\newtheorem{theorem}{Theorem}
\newtheorem{lemma}{Lemma}
\newtheorem{remark}{Remark}
\newtheorem{definition}{Definition}
\newcommand{\tacka}{\, \cdot\,}
\begin{document}

\title{The thin film equation close to self-similarity}
\author{Christian Seis}
\address{Institut f\"ur Angewandte Mathematik, Universit\"at Bonn}
\email{seis@iam.uni-bonn.de}



\date{\today}

\begin{abstract}
In the present work, we study well-posedness and regularity of the multidimensional thin film equation with linear mobility in a neighborhood of the self-similar Smyth--Hill solutions. To be more specific, we perform a von Mises change of dependent and independent variables that transforms the thin film free boundary problem into a parabolic equation on the unit ball. We show that the transformed equation is well-posed and that solutions are smooth and even analytic in time and angular direction. The latter entails the analyticity of level sets of the original equation, and thus, in particular, of the free boundary.
\end{abstract}

\maketitle

\section{Introduction and main results}

\subsection{The background}

In the present work we are concerned with  a thin film equation in arbitrary space dimensions. Our interest is in the simplest case of a linear mobility, that is, we consider the partial differential equation
\begin{equation}
\label{1}
\partial_t u + \div\left(u\grad\laplace u\right)=0
\end{equation}
 in $\R^N$. In this model, $u$ describes the thickness of a viscous thin liquid film on a flat substrate. We will focus on what is usually referred to as  the complete wetting regime, in which the liquid-solid contact angle at the film boundary is presumed to be zero.
Notice that in the three-dimensional physical space, the dimension $N$ of the substrate is $2$.

Equation \eqref{1} belongs to the following family of thin film equations
\begin{equation}
\label{1a}
\partial_t u + \div \left(m(u)\grad \laplace u\right)=0,
\end{equation}
where the mobility factor is given by $m(u) =  u^3 + \beta^{n-3}u^n$ with $\beta $ being the slippage length. The nonlinearity exponent $n>0$ depends on the slip condition at the solid-liquid interface: $n=3$ models no-slip and $n=2$ Navier-slip conditions. The case $n=1$ is a further relaxation and the linear mobility considered here is obtained to leader order in the limit $u\to 0$.

The evolution described in \eqref{1a} was originally derived as a long-wave approximation from the free-surface problem related to the Navier--Stokes equations and suitable model reductions, see, e.g., \cite{OronDavisBankoff97} and the references therein.  At the same time, it can be obtained as the Wasserstein gradient flow of the surface tension energy \cite{Otto98,GiacomelliOtto01,MatthesMcCannSavare09} and serves thus as the natural dissipative model for surface tension driven transport of viscous liquids over solid substrates.

 
The analytical treatment of the equation is challenging and the mathematical understanding is far from being satisfactory. As a fourth order problem, the thin film equation lacks a maximum principle. Moreover, the parabolicity degenerates where $u$ vanishes and, as a consequence, for compactly supported initial data (``droplets''), the solution remains compactly supported \cite{Bernis96,BertschDalPassoGarckeGrun98}. The thin film equation features thus a free boundary given by $\partial\{u>0\}$, which in physical terms is the contact line connecting the phases liquid, solid and vapour.
Nonetheless, by  using  estimates for the surface energy and compactness arguments Bernis and Friedman  established the existence of weak nonnegative solutions over a  quarter of a century ago \cite{BernisFriedman90}. The regularity of these solutions could be slightly improved with the help of certain entropy-type estimates \cite{BerettaBertschDalPasso95,BertozziPugh96,
DalPassoGarckeGrun98}, but this regularity is not sufficient for proving general uniqueness results. To gain an understanding of the thin film equation and its qualitative features, it is thus natural to find and study special solutions first. In the past ten years, most of the attention has been focused on the one-dimensional setting, for instance, near stationary solutions \cite{GiacomelliKnuepferOtto08,GiacomelliKnuepfer10}, travelling waves \cite{GiacomelliGnannKnuepferOtto14,Gnann16}, and self-similar solutions \cite{Gnann15,BelgazemGnannKuehn16}. The only regularity and well-posedness result in higher dimensions available so far is due to John \cite{John15}, whose analyzes the equation around stationary solutions. For completeness, we remark the thin film equation is also studied with non-zero contact angles, e.g., \cite{Otto98,GiacomelliOtto01,GiacomelliOtto03,Knuepfer11,
KnuepferMasmoudi15,Knuepfer15,BelgazemGnannKuehn16,Degtyarev17}. The  latter of these works is particularly interesting as it deals with the multi-dimensional situation.

In the present paper, we will conduct a study similar to John's and investigate the qualitative behavior of solutions close to self-similarity. A family of self-similar solutions to \eqref{1} is given by
 \[
u_*(t,x) = \frac{\alpha_N}{t^{\frac{N}{N+4}}} \left(\sigma_M  -  \frac{|x|^2}{t^{\frac2{N+4}}}\right)_+^2,
 \]
where $\alpha_N =\frac{ 1}{8(N+4)(N+2)}$ and $\sigma_M$ is a positive number that is determined by the mass constraint
\[
\int u_*\, dx = M,
\]
and the subscript plus sign denotes the positive part of a quantity, i.e., $(\cdot)_+ = \max\{0,\cdot\}$. 
These solutions were first found by Smyth and Hill \cite{SmythHill88} in the one-dimensional case and then rediscovered in \cite{FerreiraBernis97}.
As in related parabolic settings, the Smyth--Hill solutions play a distinguished role in the theory of the thin film equation as they are believed to describe the leading order large-time asymptotic behavior of \emph{any} solution---a fact that is currently known only for strong \cite{CarrilloToscani02} and minimizing movement \cite{MatthesMcCannSavare09} solutions. Besides that, these particular solutions are considered to feature the same regularity properties as any ``typical'' solution, at least for large times. Thus, under suitable assumptions on the initial data, we expect the solutions of \eqref{1} to be smooth up to the boundary of their support. (Notice that this behavior is exclusive for the linear mobility thin film equation, cf.~\cite{GiacomelliGnannOtto13}.)

In the present work we consider solutions that are in some suitable sense close to the self-similar Smyth--Hill solution. Instead of working with \eqref{1} directly, we will perform a certain von Mises change of dependent and independent variables, which has the advantage that it freezes the free boundary $\partial\{u>0\}$ to the unit ball. We will mainly address the following four questions:
\begin{enumerate}
\item Is there some uniqueness principle available for the transformed equation?
\item Are solutions smooth?
\item Can we deduce some regularity for the moving interface $\partial\{u>0\}$?
\item Do solutions depend smoothly on the initial data?
\end{enumerate}
We will provide positive answers to all of these questions. In fact, applying a perturbation procedure we will show that the transformed equation is well-posed in a sufficiently small neighborhood of $u_*$. We will furthermore show that the unique solution is smooth in time and space. In fact, our results show that  solutions to the transformed equation are analytic in time and in direction tangential to the free boundary. The latter in particular entails that all level sets and thus also the free boundary line corresponding to the original solutions are analytic. We finally prove analytic dependence on the initial data.

The fact that solutions depend differentiably (or even better) on the initial data will be of great relevance in a companion study on fine large-time asymptotic expansions. Indeed, in \cite{Seis17c}, we investigate the rates at which solutions converge to the self-similarity at any order. Optimal rates of convergence were already found by Carrillo and Toscani \cite{CarrilloToscani02} and Matthes, McCann and Savar\'e \cite{MatthesMcCannSavare09}, and these rates are saturated by spatial translations of the Smyth--Hill solutions. Jointly with McCann \cite{McCannSeis15} we diagonalized the differential operator obtained after formal linearization around the self-similar solution. The goal of \cite{Seis17c} is to translate the spectral information obtained in \cite{McCannSeis15} into large-time asymptotic expansions for the nonlinear problem. For this, it is necessary to rigorously linearize the equation, the framework for which is obtained in the current paper.
This strategy was recently successfully applied to the porous medium equation near the self-similar Barenblatt solutions \cite{Seis14,Seis15b}. The present work parallels in parts \cite{Seis15b} as well as the pioneering work by Koch \cite{Koch99} and the further developments by Kienzler \cite{Kienzler16} and John \cite{John15}

\subsection{Global transformation onto fixed domain}

In this subsection it is our goal to transform the thin film equation \eqref{1} into a partial differential equation that is posed on a fixed domain and that appears to be more suitable for a regularity theory than the original equation. The first step is a customary change of variables that translates the self-similarly spreading Smyth--Hill solution into a stationary solution. This is, for instance, achieved by setting
\[
\hat x = \frac{1}{\sqrt{\sigma_M}} \frac{x}{t^{\frac1{N+4}}},\quad 
\hat t  = \frac{1}{N+4}\log t,\quad
\hat u = \frac{N+4}{\sigma_M^2} t^{\frac{N}{N+4}} u ,
\]
with the effect that the Smyth--Hill solution becomes
\begin{equation}
\label{3}
\gamma \hat u_*(\hat x)^{1/2} = \frac12\left(1-|\hat x|^2\right)_+,\quad \gamma  = \sqrt{2(N+2)},
\end{equation}
and the thin film equation \eqref{1} turns into the \emph{confined} thin film equation
\begin{equation}
\label{2}
\partial_{\hat t}\hat  u + \hat \grad\cdot \left(\hat u\hat \grad \hat \laplace \hat  u\right) - \hat \grad\cdot\left(\hat x\hat u\right)=0.
\end{equation}
It is easily checked that $\hat u_*$ is indeed a stationary solution to \eqref{2} and mass is no longer spreading over all of $\R^N$. Instead, the confinement term pushes all mass towards the stationary $\hat u_*$ at the origin. To simplify the notation in the following, we will drop the hats immediately!

Now that the Smyth--Hill solution became stationary, we will perform a change of dependent and independent variables that parametrizes the solution as a graph over $ u_*$. This type of a change of variables is sometimes referred to as a von Mises transformation. It is convenient to temporarily introduce the variable $v = \gamma u^{1/2}$, so that $\sqrt{2v_*}$ maps the unit ball onto the upper half sphere.
The new variables are obtained by projecting the point $(x,\sqrt{2v(x)})$ orthogonally onto the graph of $\sqrt{2v_*}$: Noting that $\sqrt{2 v(x)+|x|^2}$ is the hypotenuse of the triangle with the edges $(0,0)$, $(0,|x|)$ and $(|x|,\sqrt{2v(x)})$, the projection point has the coordinates $(z,\sqrt{2v_*(z)})$ with
\[
z = \frac{x}{\sqrt{2v(x)+|x|^2}}.
\]
We define the new dependent variable $w$ as the distance of the point $(x,\sqrt{2v(x)})$ from the sphere, that is
\begin{equation}
\label{2a}
1 +  w = \sqrt{2v +|x|^2},
\end{equation}
which entails that $x=(1+w)z$. This change of variables is illustrated in Figure \ref{fig2}.
\begin{figure}[t]
\includegraphics[scale=.8]{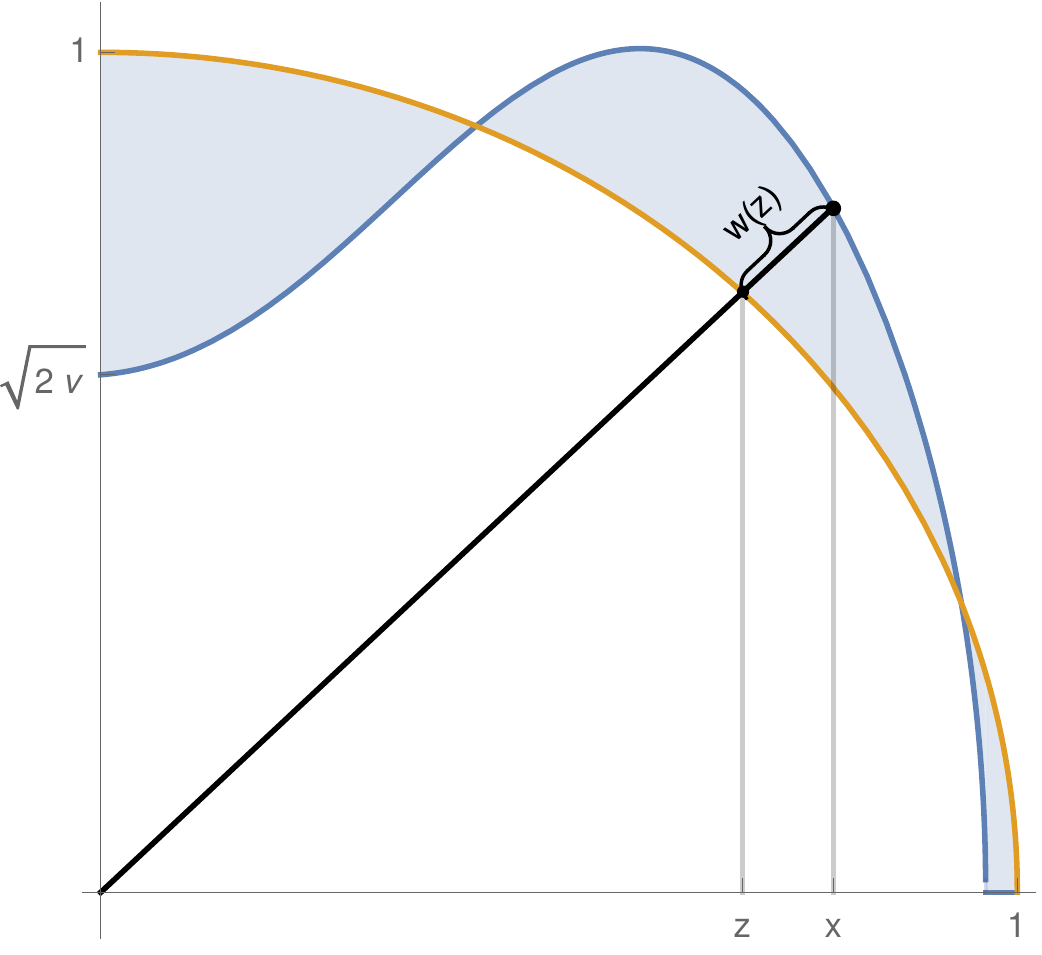}
\caption{\label{fig2} The definition of the $(z,w)$ coordinates.}
\end{figure}

The transformation of the thin film equation \eqref{2} under this change of variables leads to straightforward but tedious computations that we conveniently defer to the appendix. 
The new variable $w$ obeys the equation
\begin{equation}\label{50}
\partial_t w + \L^2w+N\L w  =f[w]
\end{equation}
on $B_1(0)$, where 
\[
\L w = -\rho^{-1}\div\left(\rho^2\grad w\right) = -\rho \laplace w +2z\cdot \grad w
\]
is precisely the linear operator that is obtained by linearizing the porous medium equation $\partial_t u - \laplace u^{3/2}=0$ about the Barenblatt solution, see, e.g., \cite{Seis14,Seis15b}. Before specifying the particular form of the nonlinearity $f[w]$, let us notice that the linear operator $\L^2 +N\L$ corresponding to the thin film dynamics was previously found by McCann and the author \cite{McCannSeis15} by a formal computation. 
Its relation to the porous medium linear operator $\L$ is not surprising but reflects the deep relation between both equations. Indeed, as first exploited by Carrillo and Toscani \cite{CarrilloToscani02}, the dissipation rate of the porous medium entropy is just the surface energy that drives the thin film dynamics. On a more abstract level, this observation can be expressed  by the so-called energy-information relation, first formulated by Matthes, McCann and Savar\'e \cite{MatthesMcCannSavare09}, which connects the Wasserstein gradient flow structures of both equations \cite{Otto98,Otto01,GiacomelliOtto01}.

Let us finally discuss the right-hand side of \eqref{50}. We can split $f[w]$ according to $f[w] = f^1[w] + f^2[w]+f^3[w]$, where
\begin{align*}
f^1[w] &= p\star R_{1}[ w] \star \left((\grad  w)^{2\star} +\grad w\star  \grad^2 w \right)\\
f^2[w]& = p\star R_{1}[ w] \star  \rho\left( (\grad^2 w)^{2\star} + \grad w\star \grad^3 w \right)\\
f^3[w]&= p\star R_{2}[ w] \star  \rho^2\left( (\grad^2 w)^{3\star}+ \grad\tilde w\star  \grad^2\tilde w \star \grad^3  w + (\grad w)^{2\star} \star\grad^4 w\right),
\end{align*}
and
\[
R_i[ w] = \frac{(\grad  w)^{k\star}}{( 1+ w+z\cdot\grad w)^{k+i}}
\]
for some $k\in \N_0$.
We will see in Section \ref{S:Nonlinear} below that the particular form of the nonlinearity is irrelevant for the  perturbation argument. We have  thus introduced a slightly condensed notation to simplify the terms in the nonlinearity: We write $f\star g$ to denote any arbitrary linear combination of the tensors (vectors, matrices) $f$ and $g$. For instance,  $\grad^{m_1} \tilde w \star \grad^{m_2}\tilde  w$ is an arbitrary linear combination of products of derivatives of orders $m_1$ and $m_2$. The iterated application of the $\star$  is abbreviated as $f^{j\star} = f\star \dots \star f$, if the latter product has $j$ factors. The conventions $f^{1\star} = 1\star f$ and $f^{0\star}=1$ apply. We  furthermore use $p$ as an arbitrary representative of a (tensor valued) polynomial in $z$. We have only kept track of those $\rho$ prefactors, that will be of importance later on.

\subsection{The intrinsic geometry and function spaces}

In our analysis of the linear equation associated to \eqref{50}, i.e.,
\begin{equation}
\label{50b}
\partial_t w + \L^2 w+N\L w = f
\end{equation} 
for some general $f$, we will make use of the framework developed earlier in \cite{Koch99,Seis15b} for the second order equation 
\begin{equation}
\label{50a}
\partial_t w + \L w = f.
\end{equation}
The underlying point of view in there is the fact that the previous equation can be interpreted as a heat flow on a weighted manifold, i.e., a Riemannian manifold to which a new volume element (typically a positive multiple of the one induced by the metric tensor) is assigned. The theories for heat flows on weighted manifold parallel those on Riemannian manifolds in many respects, cf.~\cite{Grigoryan06}; for instance, a Calder\'on--Zygmund theory is available for \eqref{50a}. The crucial idea in  \cite{Koch99,Seis15b} is to trade the Euclidean distance on $B_1(0)$ for the geodesic distance induced by the heat flow interpretation. In this way, we equip the unit ball with a non-Euclidean Carnot--Carath\'eodory metric, see, e.g., \cite{BellaicheRisler96}, which has the advantage that the parabolicity of the linear equation can be restored. The same strategy has been applied in similar settings in \cite{DaskalopoulosHamilton98,Koch99,Kienzler16,John15,
DenzlerKochMcCann15,Seis15b}.

We define
\[
d(z,z') =  \frac{|z-z'|}{\sqrt{\rho(z)} + \sqrt{\rho(z')} + \sqrt{|z-z'|}}
\]
for any $z,z' \in B_1(0)$. Notice that $d$ is not a metric as it lacks a proper triangle inequality. This semi-distance is in fact equivalent to the geodesic distance induced on the (weighted) Riemannian manifold associated with the heat flow \eqref{50a}, see \cite[Proposition 4.2]{Seis15b}.
We define open balls with respect to the metric $d$ by
\[
B_r^d(z)  = \left\{ z'\in \overline{B_1(0)}:\: d(z,z')<r\right\},
\]
and set $Q_r^d(z) = \left(\tfrac12r^4,r^4\right)\times B_r^d(z)$ and also $Q(T) = (T,T+1)\times B_1(0)$. Properties of intrinsic balls and volumes will be cited in Section \ref{SS:intrinsicballs} below.

With these preparations, we are in the position to introduce the (semi-)norms
\[
\begin{aligned}
\|w\|_{X(p)} & =  \sum_{(\ell,k,|\beta|)\in \E} \sup_{\substack{ z\in \overline{B_1(0)} \\0<r\le 1}}  \frac{r^{4k+|\beta|-1}}{\theta(r,z)^{2\ell -|\beta|+1}}|Q_r^d(z)|^{-\frac1p} \|\rho^{\ell}\partial_t^k\partial_z^{\beta} w\|_{L^p(Q_r^d(z))}\\
&\quad + \sum_{(\ell,k,|\beta|)\in \E} \sup_{T\ge 1}T\|  \rho^{\ell}\partial_t^k\partial_z^{\beta}  w\|_{L^p(Q(T))},\\
\|f\|_{Y(p)} & =   \sup_{\substack{ z\in \overline{B_1(0)} \\0<r\le 1}}  \frac{r^{3 }}{\theta(r,z)}|Q_r^d(z)|^{-\frac1p} \| f \|_{L^p(Q_r^d(z))} +  \sup_{T\ge 1} T \|    f\|_{L^p(Q(T))},\\
\end{aligned}
\]
for $p\ge 1$, where 
\[
\E = \left\{(0,1,0),(0,0,2),(1,0,3),(2,0,4)\right\}.
\]
These norms (or Whitney measures) induce the function spaces $X(p)$ and $Y(p)$, respectively, in the obvious way.

\subsection{Statement of the results}
	
In view of the particular form of the nonlinearity it is apparent that any well-posedness theory for the transformed equation \eqref{50} requires an appropriate control of $w$ and $\grad w$ to prevent the denominators in $R_i[w]$ from degenerating. This is achieved when the Lipschitz norm $\|w\|_{W^{1,\infty}} = \|w\|_{L^{\infty}} + \|\grad w\|_{L^{\infty}}$ is sufficiently small. 
 A suitable function space for existence and uniqueness is provided by $X(p)\cap L^{\infty}(W^{1,\infty})$ under minimal assumptions on the initial data. Here we have used the convention that $L^q(X) = L^q((0,T);X)$ for some (possibly infinite) $T>0$. Our first main result is:

\begin{theorem}[Existence and uniqueness]
\label{T1}
Let $p>N+4$ be given. There exists $\eps,\eps_0>0$ such that for every $g\in W^{1,\infty}$ with
\[
\|g\|_{W^{1,\infty}}\le \eps_0
\]
there exists a solution $w$ to the nonlinear equation \eqref{50} with initial datum $g$ and $w$ is unique among all solutions with $\|w\|_{L^{\infty}(W^{1,\infty})} + \|w\|_{X(p)}\le \eps$.
Moreover, this solution satisfies the estimate
\[
\|w\|_{L^{\infty}(W^{1,\infty})} + \|w\|_{X(p)} \lesssim \|g\|_{W^{1,\infty}}.
\]
\end{theorem}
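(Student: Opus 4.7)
The plan is to solve \eqref{50} with initial datum $g$ via a Banach fixed-point argument in a small ball of the space $X(p)\cap L^\infty(W^{1,\infty})$. Concretely, one sets
\[
\mathcal{B}_\eps = \left\{w : \|w\|_{L^\infty(W^{1,\infty})} + \|w\|_{X(p)} \le \eps\right\}
\]
and defines $\Phi\colon \mathcal{B}_\eps \to X(p)\cap L^\infty(W^{1,\infty})$ by $\Phi(w) = \tilde w$, where $\tilde w$ is the unique solution of the \emph{linear} Cauchy problem
\[
\partial_t \tilde w + \L^2 \tilde w + N\L \tilde w = f[w], \qquad \tilde w(0,\cdot) = g,
\]
with the right-hand side $f[w]=f^1[w]+f^2[w]+f^3[w]$ frozen. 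Fixed points of $\Phi$ are precisely the solutions sought in the theorem.

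The first step is a linear maximal-regularity estimate of the form
\[
\|\tilde w\|_{L^\infty(W^{1,\infty})} + \|\tilde w\|_{X(p)} \lesssim \|g\|_{W^{1,\infty}} + \|f\|_{Y(p)}.
\]
This is the fourth-order analogue of the Calder\'on--Zygmund theory developed for \eqref{50a} in \cite{Koch99,Seis15b}, and I would derive it by applying that second-order theory twice through the factorization $\L^2+N\L = \L(\L+N)$, combined with a Sobolev-type embedding $X(p)\hookrightarrow L^\infty(W^{1,\infty})$. The latter embedding is where the threshold $p>N+4$ enters: the intrinsic parabolic dimension associated to the Carnot--Carath\'eodory geometry $(B_1(0),d)$ together with the time scaling $r^4$ is $N+4$, and the Whitney-scale control of three spatial derivatives of $w$ in $L^p$ provided by $\E$ then yields uniform control of $w$ and $\grad w$.

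The second step is to prove the nonlinear estimates
\[
\|f[w]\|_{Y(p)} \lesssim \bigl(\|w\|_{L^\infty(W^{1,\infty})} + \|w\|_{X(p)}\bigr)^2
\]
and
\[
\|f[w_1]-f[w_2]\|_{Y(p)} \lesssim \eps \,\bigl(\|w_1-w_2\|_{L^\infty(W^{1,\infty})} + \|w_1-w_2\|_{X(p)}\bigr)
\]
for $w,w_1,w_2\in\mathcal{B}_\eps$. The smallness of $w$ and $\grad w$ in $L^\infty$ keeps the denominator $1+w+z\cdot\grad w$ in the factors $R_i[w]$ bounded away from zero, so $R_i[w]$ is a smooth and bounded multiplier on $\mathcal{B}_\eps$ contributing only harmless constants. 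The key observation is that every summand in $f^1,f^2,f^3$ is at least quadratic in the collection $(\grad w,\grad^2 w,\grad^3 w,\grad^4 w)$, with the $\rho$- and $\rho^2$-prefactors and the polynomial weights $p(z)$ tuned so that the exponents $(\ell,k,|\beta|)\in\E$ match the target weight $r^3\theta(r,z)^{-1}$ in the $Y(p)$ semi-norm. For each term, I would place the highest-order factor in the local Whitney $L^p$ norm of $X(p)$ and the lower-order factors in $L^\infty(W^{1,\infty})$ (invoking the embedding of step one), and then check that the resulting exponents of $r$ and $\theta$ balance correctly.

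Combining these two steps and choosing $\eps$ and subsequently $\eps_0$ small enough, $\Phi$ maps $\mathcal{B}_\eps$ into itself and is a contraction with respect to $\|\cdot\|_{L^\infty(W^{1,\infty})}+\|\cdot\|_{X(p)}$, hence admits a unique fixed point in $\mathcal{B}_\eps$. The linear estimate applied to this fixed point, with the quadratic contribution of $f[w]$ absorbed on the left-hand side, yields the claimed a priori bound. The main obstacle I expect is the second step: the combinatorics of matching the Whitney weights $r,\theta(r,z),\rho(z)$ across the many inhomogeneous-order terms in $f^1,f^2,f^3$, and in particular verifying that the far-field component $\sup_{T\ge 1}T\|\cdot\|_{L^p(Q(T))}$ of both the $X(p)$ and the $Y(p)$ semi-norms closes correctly under the nonlinearity, is where the structural design of those norms is actually being put to use and where the delicate accounting takes place.
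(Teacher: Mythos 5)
Your core argument is the same as the paper's: a Banach fixed point for $w\mapsto I(w,g)$ in the ball $M_\eps\subset X(p)\cap L^\infty(W^{1,\infty})$, combining the linear maximal-regularity estimate of Theorem~\ref{Thm1} with the Lipschitz bound on $w\mapsto f[w]$ of Lemma~\ref{Lemma15}, and then closing the a priori bound by smallness.

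Two of your asides about how the linear machinery would be built, however, would not survive if pursued literally. First, the factorization $\L^2+N\L=\L(\L+N)$ does not let you ``apply the second-order theory twice,'' because $\partial_t + \L(\L+N)$ is not a composition of two second-order parabolic operators: a product $(\partial_t+\L)(\partial_t+\L+N)$ produces a $\partial_t^2$ term, which is absent. The paper instead develops energy estimates, Gaussian kernel bounds, and Calder\'on--Zygmund theory directly for the fourth-order operator $\partial_t+\L_\sigma^2+n\L_\sigma$ in Section~2. Second, there is no embedding $X(p)\hookrightarrow L^\infty(W^{1,\infty})$: the semi-norm $\|\cdot\|_{X(p)}$ controls only $\partial_t w$, $\grad^2 w$, $\rho\grad^3 w$, $\rho^2\grad^4 w$, so it says nothing about $w$ or $\grad w$ themselves (affine functions have vanishing $X(p)$ semi-norm). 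In the paper the $L^\infty(W^{1,\infty})$ control is genuinely part of the output of the linear theory (Propositions~\ref{Prop1}, \ref{Prop4}, via kernel bounds and the initial datum), not a consequence of $X(p)$-membership; this is precisely why the contraction is run in the intersection space $X(p)\cap L^\infty(W^{1,\infty})$ rather than in $X(p)$ alone. Neither issue undermines your fixed-point argument for Theorem~\ref{T1}, since that argument uses the linear estimate as a black box, but both would derail you if you tried to fill in Section~2 that way.
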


Theorem \ref{T1} contains the first (conditional) uniqueness result for the multidimensional thin film equation in a neighborhood of self-similar solutions. Since any solution to the thin film equation is expected to converge towards the self-similar Smyth--Hill solution, our result can be considered as a uniqueness result for large times. 
Notice that the smallness of the Lipschitz norm of $w$ can be translated back into closeness of $u$ to the stationary $u_*$. Indeed, $\|w\|_{W^{1,\infty}}\ll1$ can be equivalently expressed as
\[
\|u-u_*\|_{L^{\infty}(\P(u))}  + \|\grad u + x\|_{L^{\infty}(\P(v))}\ll1,
\]
if $\P(u) = \{u>0\}$ is the positivity set of $u$. Recall that $\grad u_* = -x$ inside $B_1(0)$.

Our second result addresses the regularity of the unique solution found above.

\begin{theorem}\label{T2}
Let $w$ be the solution from Theorem \ref{T1}. Then $w$ is smooth and analytic in time and angular direction.
\end{theorem}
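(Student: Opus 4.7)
The plan is twofold: first establish smoothness by a bootstrap argument using the linear maximal regularity theory in the $X(p)$-$Y(p)$ scale, and then upgrade to analyticity in time and angular direction via a symmetry/rescaling argument in the spirit of \cite{Koch99,Kienzler16,Seis15b}.

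For the smoothness part, I would exploit the fact that the linear operator $\partial_t + \L^2 + N\L$ commutes with the symmetries of the problem, namely time translations $\partial_t$ and the angular vector fields $T_{ij} = z_i \partial_{z_j} - z_j \partial_{z_i}$ (these are tangential to the free boundary $\partial B_1(0)$ and preserve the weight $\rho = 1-|z|^2$). Differentiating \eqref{50} in these directions yields equations of the form
\[
\partial_t W + \L^2 W + N\L W = \partial f[w],
\]
where $W = \partial_t^k T^{\alpha} w$ and the right-hand side is controlled, via Leibniz's rule and the multilinear structure of the nonlinearity described in Section~\ref{S:Nonlinear}, by products of lower-order tangential derivatives of $w$ in the $Y(p)$ norm. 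Iterating the linear maximal regularity estimate together with the smallness from Theorem~\ref{T1}, I would inductively obtain $\partial_t^k T^{\alpha} w \in X(p)$ for every $k$ and $\alpha$. Regularity in the remaining (normal) direction is then recovered by reading the equation as $\L^2 w = -\partial_t w - N\L w + f[w]$ and using the (degenerate) elliptic structure of $\L^2 + N\L$ in the intrinsic Carnot--Carath\'eodory geometry, in which this operator is uniformly elliptic of fourth order.

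For the analyticity in time and angular direction, my approach would be to estimate the tangential iterated derivatives with factorial control. Concretely, I would prove by induction on $m = k + |\alpha|$ that there exist constants $C_0, C_1$ depending only on $\|w\|_{L^\infty(W^{1,\infty})}+\|w\|_{X(p)}$ such that
\[
\|\partial_t^k T^{\alpha} w\|_{X(p)} \le C_0 \, C_1^{m} \, m! .
\]
The induction step applies the linear estimate to the equation satisfied by $\partial_t^k T^{\alpha} w$; the Fa\`a di Bruno combinatorics for differentiating the rational nonlinearity $R_i[w]$ produces a sum over partitions, and each summand factorizes into norms of lower-order derivatives. Using the induction hypothesis and the standard convolution identity $\sum_{j=0}^{m} \binom{m}{j} j!(m-j)! = (m+1)!$, one closes the recursion provided $C_1$ is chosen large and $\|w\|_{X(p)}$ is small enough to absorb the terms with $m$ factors on the right-hand side. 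The claim follows since $C_0 C_1^m m!$ is precisely the Cauchy-type bound characterizing real analyticity in the $(t,\theta)$ variables.

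The principal difficulty lies in the combinatorics of the Leibniz expansion of $f[w]$: since the denominators $(1+w+z\cdot\grad w)^{k+i}$ in $R_i[w]$ generate, upon differentiation, an expansion involving arbitrarily many factors, one must organize the terms so that each factor can be absorbed using the induction hypothesis without losing the factorial scaling. Here the smallness of $\|w\|_{W^{1,\infty}}$ is crucial: it keeps the denominators bounded away from zero uniformly, so that $R_i[w]$ and all its tangential derivatives can be expanded as convergent Neumann-type series whose combinatorial weights match those produced by the Fa\`a di Bruno formula. A secondary (but routine) point is that the intrinsic function spaces $X(p)$ and $Y(p)$ are invariant under $\partial_t$ and under the rotations generated by $T_{ij}$, which follows from the rotational symmetry of the semi-distance $d$ and of the weight $\rho$, and this invariance is what allows the linear estimate to be applied at each inductive step with a constant independent of $m$.
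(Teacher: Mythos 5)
Your proposal is mathematically plausible but takes a genuinely different route from the paper. Where you propose a \emph{hard} analyticity proof---an explicit induction $\|\partial_t^k T^\alpha w\|_{X(p)}\le C_0\,C_1^m\,m!$ closed via Fa\`a di Bruno combinatorics and the convolution identity $\sum_j\binom{m}{j}j!(m-j)!=(m+1)!$---the paper uses the \emph{soft} parameter trick of Angenent and Koch--Lamm. Concretely, the paper switches to spherical coordinates, introduces the analytic family of coordinate changes $\Xi_{\lambda,\psi}(t,s,\phi)=(\lambda t,s,\phi+t\psi)$ and the transplanted solution $w_{\lambda,\psi}^*=w^*\circ\Xi_{\lambda,\psi}$, observes that $w_{\lambda,\psi}^*$ solves a perturbed equation with right-hand side $f_{\lambda,\psi}[w]=\lambda f[w]+(1-\lambda)\H_\sigma w+\psi\cdot\grad_\phi w$ that is \emph{analytic in the parameters} $(\lambda,\psi)$, and then applies the analytic implicit function theorem once more to the parametrized fixed-point map. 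Analyticity in $(\lambda,\psi)$ near $(1,0)$ comes out for free; the identity $\partial_\lambda^k\partial_\psi^{\beta'}\big|_{(1,0)}w_{\lambda,\psi}=t^{k+|\beta'|}\partial_t^k\partial_\phi^{\beta'}w$ converts this into the factorial bounds you were trying to prove by hand. The gain is that all the combinatorics you correctly identify as the ``principal difficulty'' never actually has to be done: the analyticity of the rational nonlinearity as a map between $\bar X(p)$ and $Y(p)$ is checked once (it is essentially your Lemma~\ref{Lemma15}), and the implicit function theorem does the rest. Your induction would presumably also close, at the cost of carefully organizing the sum over partitions produced by differentiating the denominators $(1+w+z\cdot\grad w)^{-(k+i)}$ so that the smallness of $\|w\|_{W^{1,\infty}}$ can absorb higher multiplicities without destroying the $m!$ scaling---this is doable but significantly more delicate than your sketch suggests.

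The second place where your sketch and the paper diverge is the normal (radial) regularity. You invoke ``the degenerate elliptic structure of $\L^2+N\L$, uniformly elliptic of fourth order in the intrinsic geometry,'' which is too cavalier: the intrinsic Carnot--Carath\'eodory metric restores parabolicity for the $L^2_\sigma$ and maximal-regularity estimates, but it does \emph{not} make $\L^2$ a classical fourth-order elliptic operator at the boundary, and in particular the normal derivatives do not come with the same weights as tangential ones. The paper addresses this (Step~4) with the intrinsic Morrey inequality \eqref{47c} together with the already-obtained bounds on $\partial_t$ and $\grad_\phi$ derivatives to control $\partial_s^2$ and $\partial_s^3$, and then reads off $\rho^2\partial_r^4 w$ by rewriting the equation as an ODE in the radial variable, $\rho\partial_r^2(\rho\partial_r^2 w)=f[w]-\partial_t w+\mbox{l.o.t.}$, and integrating it explicitly. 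This gives only boundedness of $\rho^2 t^\kappa\partial_r^4 w$ and, iterated, smoothness---not analyticity---in the radial direction, consistent with the scope of the theorem. You should replace your one-line appeal to intrinsic ellipticity with an argument of this kind, because the claim as you stated it does not hold: near $\partial B_1(0)$ the normal-derivative estimates are strictly weaker than the tangential ones and are obtained from the equation, not from any elliptic regularity theorem.
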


It is clear that the smoothness of $w$ immediately translates into the smoothness of $u$ up to the boundary of its support. Moreover, the analyticity result  particularly implies the analyticity of the level sets of $u$. Indeed, the level set of $u$ at height $\lambda\ge 0$ is given by
\[
\left\{(t,x):\: w(t,r,\phi) = \sqrt{r^2 +\frac{2\sqrt{\lambda}}{\gamma}} - 1\right\},
\]
if $r$ and $\phi$ are the radial and angular coordinates, respectively. As a consequence, the temporal and tangential analyticity of $w$ translates into the analyticity of the level sets of $u$. Notice that the zero-level set is nothing but the free boundary $\partial\{u=0\}$, and thus, Theorem \ref{T2} proves the analyticity of the free boundary of solutions near self-similarity.

In the forthcoming paper \cite{Seis17c}, we will use the gained regularity in time for a construction of invariant manifolds that characterize the large time asymptotic behavior at any order.

The proof of existence and uniqueness in Theorem \ref{T1} follows from a fixed point argument and a maximal regularity theory for the linear equation. Analyticity and regularity is essentially a consequence of an argument first introduced by Angenent \cite{Angenent90a} and later improved by Koch and Lamm \cite{KochLamm12}.

\subsection{Notation}
 One word about constants. In the major part of the subsequent analysis, we will not keep track of constants in inequalities but prefer to use the sloppy notation $a \lesssim b $ if $a\le C b$ for some universal constant $C$. Sometimes, however, we have to include constants like $e^{\pm Ct}$ when dealing with exponential growth or decay rates. In such cases, $C$ will always be a positive constant which is generic in the sense that it will not depend on $t$ or $z$, for instance. This constant might change from line to line, which allows us to write things like $e^{2Ct}\lesssim e^{Ct}$ even for large $t$.

\section{The linear problem}

Our goal in this section is the study of the initial value problem for the linear equation \eqref{50b}. In fact, our analysis also applies to the slightly more general equation \begin{equation}\label{7}
\left\{\begin{array}{rcll}\partial_t w + \L_{\sigma}^2 w + \L_{\sigma} w& = &f\qquad &\mbox{in }(0,\infty)\times B_1(0)\\
w(0,\tacka) &=&g\qquad &\mbox{in }B_1(0)\end{array}\right.
\end{equation}
where $\L_{\sigma}$ corresponds to  the linearized porous medium equation considered in \cite{Seis14,Seis15b}, defined by
\[
\L_{\sigma} w = -\rho^{-\sigma}\div\left(\rho^{\sigma+1} \grad w\right) =  -\rho \laplace w + (\sigma+1) z\cdot \grad w
\]
for any smooth function $w\in C^{\infty}(\bar B)$ and $n\ge 0$ is arbitrary. The constant $\sigma$ is originally chosen greater than $-1$, but we will restrict our attention to the case $\sigma>0$ for convenience. 

Our notion of a weak solution is the following:

\begin{definition}[Weak solution]
Let $0<T\le \infty$ and $f\in L^2((0,T); L^2_{\sigma})$, $g\in L^2_{\sigma}$. We call $w$ a weak solution to \eqref{7} if $w\in L^2((0,T); L^2_{\sigma})$ with $\L w\in L^2((0,T); L^2_{\sigma}))$ solves
\begin{eqnarray}
\lefteqn{-\int_0^T \int \partial_t \zeta w\, d\mu_{\sigma} dt + \int_0^T \int \L_{\sigma}\zeta\L_{\sigma} w\, d\mu_{\sigma} dt + n\int_0^T \int \grad\zeta\cdot\grad w\, d\mu_{\sigma+1} dt}\nonumber \\
& =& \int_0^T \int \zeta f\, d\mu_{\sigma} dt + \int \zeta(0,\tacka)g\, d\mu_{\sigma},\label{8a}
\hspace{13em}
\end{eqnarray}
for all $\zeta \in C^{\infty}([0,\infty)\times \overline{B_1(0)})$ with $\spt\zeta\subset [0,T)\times \overline{B_1(0)}$.
\end{definition}

Here we have used the notation $L^2_{\sigma}$ for  Lebesgue space $L^2(\mu_{\sigma})$, if $\mu_{\sigma}$ is the absolute continuous measure defined by
\[
d \mu_{\sigma} = \rho^{\sigma}\, dx.
\]

The Hilbert space theory for \eqref{7} is relatively easy and will be developed in Subsection \ref{SS:energy} below. In order to perform a perturbation argument on the nonlinear equation, however, we need to control the solution $w$ in the Lipschitz norm. The function spaces $X(p)$ and $Y(p)$ introduced earlier are suitable for such an argument. In fact, our objective in this section is the following result for the linear equation \eqref{7}.

\begin{theorem}
\label{Thm1}
Let $p>N+4$ be given. Assume that $g\in W^{1,\infty}$. Then there exists a unique weak solution $w$ to \eqref{7}. and this solution satisfies the a priori estimate
\[
\|w\|_{W^{1,\infty}} + \|w\|_{ X(p)} \lesssim \|f\|_{Y(p)} + \|g\|_{W^{1,\infty}}.
\]
\end{theorem}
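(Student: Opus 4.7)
The plan is to establish Theorem \ref{Thm1} in three stages, in close analogy with the Koch--Seis framework for the second-order problem $\partial_t + \L_\sigma = f$ of \cite{Koch99,Seis15b}. First I would construct a weak solution in the Hilbert space $L^2_\sigma$ via energy methods. Then I would upgrade it to $X(p)$ by a Whitney-type maximal regularity estimate adapted to the Carnot--Carath\'eodory geometry defined by $d$. Finally, the $L^\infty(W^{1,\infty})$ bound follows by a parabolic Morrey embedding, valid precisely because $p>N+4$ matches the intrinsic dimension of the cylinders $Q_r^d(z)$, whose volume scales like $r^{N+4}$ under the fourth-order parabolic zoom $t \sim r^4$.

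For the Hilbert space step, the integration-by-parts identity
\[
\int \L_\sigma \zeta \cdot w \, d\mu_\sigma = \int \grad \zeta \cdot \grad w \, d\mu_{\sigma+1}
\]
shows that $\L_\sigma$ is symmetric and nonnegative, so the operator $A := \L_\sigma^2 + n\L_\sigma$ is positive self-adjoint on $L^2_\sigma$ with coercive form
\[
Q(w) = \|\L_\sigma w\|_{L^2_\sigma}^2 + n\|\grad w\|_{L^2_{\sigma+1}}^2.
\]
The Lumer--Phillips theorem then delivers a contraction semigroup $e^{-tA}$, and testing \eqref{7} against $w$ yields the basic energy inequality, from which existence, uniqueness and the $L^2$ a priori bound follow.

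The heart of the proof is the maximal regularity estimate $\|w\|_{X(p)} \lesssim \|f\|_{Y(p)} + \|g\|_{W^{1,\infty}}$. The strategy is to localize the equation on an intrinsic cylinder $Q_r^d(z)$ and rescale by a factor $r$ in space and $r^4$ in time; on each such cylinder, $A$ becomes a uniformly fourth-order parabolic operator on a set of essentially unit size, so classical $L^p$ parabolic maximal regularity applies. The weights $\theta(r,z)^{2\ell-|\beta|+1} r^{4k+|\beta|-1}$ in the definition of $X(p)$ encode exactly the homogeneities produced by this zoom (each application of $\L_\sigma$ contributes a factor $\rho \sim \theta^2$ and each Euclidean derivative rescales appropriately with respect to the anisotropic $d$-balls). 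To assemble the local estimates into a global one I would use a Whitney covering of $\overline{B_1(0)}$ by intrinsic balls together with a Duhamel representation against the heat kernel of $\partial_t + A$, exactly as in \cite[Section~5]{Seis15b} for the second-order case. The large-time piece $\sup_{T\ge 1} T\|\cdot\|_{L^p(Q(T))}$ follows from exponential decay of $e^{-tA}$ off the zero eigenspace, guaranteed by the spectral gap of $\L_\sigma$ computed in \cite{McCannSeis15}.

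The main obstacle I expect is the interaction between the fourth-order degeneracy and the boundary behaviour. The principal part $\L_\sigma^2$ expands as $\rho^2 \Delta^2$ plus lower-order terms, so it degenerates twice as fast as $\L_\sigma$ and carries more complicated cross terms with the drift $z \cdot \grad$. Making precise the informal picture that a Carnot--Carath\'eodory ball of radius $r$ supports a uniform fourth-order parabolic problem after rescaling requires careful bookkeeping when commuting $\partial_z^\beta$ through $\L_\sigma^2$: lower-order terms must be absorbed into the $Y(p)$-side, and the weights $\rho^\ell$ have to be distributed correctly to match the admissible $(\ell,k,|\beta|)\in \E$. Once this commutator analysis is done, the remaining ingredients---intrinsic ball geometry, Whitney coverings and kernel bounds---are either available in \cite{Seis15b,Koch99} or follow from straightforward fourth-order adaptations.
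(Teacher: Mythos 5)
Your high-level plan (energy estimates in $L^2_\sigma$, then $L^p$ maximal regularity in the intrinsic geometry, then the $W^{1,\infty}$ bound, using Whitney coverings and a Duhamel representation) is broadly in the spirit of the Koch--Seis framework that the paper follows, and your Hilbert-space step and the spectral-gap argument for large times are essentially correct. However, the core of your argument contains a claim that does not hold and obscures where the real work lies.

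You assert that, after localizing on an intrinsic cylinder $Q_r^d(z)$ and rescaling by $(r,r^4)$, ``$A$ becomes a uniformly fourth-order parabolic operator on a set of essentially unit size, so classical $L^p$ parabolic maximal regularity applies.'' This is false precisely in the regime that matters, namely on cylinders touching the boundary (where $\sqrt{\rho(z_0)}\lesssim r$). There the weight $\rho$ still degenerates linearly across the rescaled ball, $\L_\sigma^2$ remains a genuinely degenerate fourth-order operator with a characteristic boundary, and no amount of anisotropic zooming produces uniform parabolicity; this is exactly why Koch, Kienzler and Seis had to build a bespoke theory for these operators rather than invoke classical estimates. The paper never rescales to uniform parabolicity. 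Instead, the heart of its proof of Theorem~\ref{Thm1} is the derivation of on-diagonal and off-diagonal Gaussian bounds for the kernel of $e^{-t(\L_\sigma^2+n\L_\sigma)}$ (Proposition~\ref{Prop2}), obtained via Davies-type weighted energy estimates with the auxiliary Lipschitz-in-$d$ weights $\chi_{a,b}$ (Lemmas~\ref{Lemma7} and \ref{Lemma8}) together with local $L^2_\sigma$ higher-order regularity (Lemma~\ref{Lemma6}). These Gaussian bounds then feed into two separate mechanisms: (i) the Calder\'on--Zygmund theory on the space of homogeneous type $((0,\infty)\times B_1(0),D,\lambda^1\otimes\mu_\sigma)$ (Lemma~\ref{Lemma12}, Proposition~\ref{Prop3}), which is what actually converts the $L^2_\sigma$ maximal regularity to $L^p$ --- not a rescaled classical estimate; and (ii) direct convolution bounds against the kernel for the $L^\infty(W^{1,\infty})$ estimates and for the off-diagonal/large-time pieces of the Whitney measure (Lemmas~\ref{Lemma13}, \ref{Lemma14}), rather than a Morrey embedding from $X(p)$. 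Until you replace the rescaling claim with a concrete route to the Gaussian kernel bounds and the CZ cancellation conditions \eqref{29}--\eqref{30} for the operators $T_{\ell,k,\beta}$, the key inequality $\|w\|_{X(p)}\lesssim\|f\|_{Y(p)}+\|g\|_{W^{1,\infty}}$ is unsupported.
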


As mentioned earlier, a change from the Euclidean distance to a Carnot--Carath\'e\-o\-do\-ry distance suitable for the second order operator $\L_{\sigma}$ will be crucial for our subsequent analysis.  In the following subsection we will recall some basic properties of the corresponding intrinsic volumes and balls, which were derived earlier in \cite{Seis15b}. Section \ref{SS:prelim} intends to provide some tools that allow to switch from the spherical setting to the Cartesian one. Energy estimates are established in Subsection \ref{SS:energy}. In Subsection \ref{SS:hom} we treat the homogeneous problem and derive Gaussian estimates. A bit of Calder\'on--Zygmund theory is provided in Subsection \ref{SS:CZ}. Finally, Subsection \ref{SS:inhom} contains the theory for the inhomogeneous equation.

\subsection{Intrinsic balls and volumes}\label{SS:intrinsicballs}

 In the following, we will collect some definitions and properties that are related to our choice of geometry and that will become relevant in the subsequent analysis of the linearized equation. Details and derivations can be found in \cite[Chapter 4]{Seis15b}.

It can be shown that the intrinsic balls $B_r^d(z)$ are equivalent to Euclidean balls in the sense that
there exists a constant $C<\infty$ such that
\begin{equation}
\label{8d}
B_{C^{-1} r\theta(r,z)}(z) \subset B_r^d(z) \subset B_{Cr\theta(r,z)}(z)
\end{equation}
for any $z\in  \overline {B_1(0)}$. Here $\theta$ is defined by
\[
\theta (r,z) = r \vee \sqrt{\rho(z)}.
\]
For the local estimates, it will be crucial to notice that
\begin{equation}
\label{8b}
\sqrt{\rho(z_0)}\lesssim r\quad\Longrightarrow\quad \sqrt{\rho(z)} \lesssim r \quad \mbox{for all }z\in B_r^d(z_0),
\end{equation}
and
\begin{equation}
\label{8c}
\sqrt{\rho(z_0)}\gg r\quad\Longrightarrow\quad \rho(z) \sim \rho(z_0) \quad \mbox{for all }z\in B_r^d(z_0).
\end{equation}
In particular, it holds that $\theta(r,\cdot)\sim \theta(r,z_0)$ in $B_r^d(z_0)$. Moreover, if $\sqrt{\rho(z_0)}\lesssim r$, and $z_0\not =0$, then \eqref{8d} implies that
\begin{equation}
\label{8e}
 B_{C^{-1}r^2}\left(\frac{z_0}{|z_0|}\right) \cap \overline{ B_1(0)} \subset B_r^d(z_0) \subset B_{Cr^2} \left(\frac{z_0}{|z_0|}\right) \cap \overline{B_1(0)} .
\end{equation}

We will sometimes write $|A|_{\sigma} = \mu_{\sigma}(A)$ for measurable sets $A$.
The volume of an intrinsic ball can be calculated as follows: 
\begin{equation}
\label{8f}
|B_r^d(z)|_{\sigma} \sim r^N\theta(r,z)^{N+2\sigma}.
\end{equation}
In particular, it holds that
\begin{equation}\label{8g}
\frac{|B_r^d(z)|_{\sigma}}{|B_r^d(z')|_{\sigma}} \lesssim \left(1+\frac{d(z,z')}r\right)^{2N+2\sigma}.
\end{equation}

\subsection{Preliminary results}\label{SS:prelim}

By the symmetry of $\L_{\sigma}$ and the Cauchy--Schwarz inequality in $L^2_{\sigma}$, we have the interpolation 
\begin{equation}
\label{23}
\|\grad \zeta\|_{\sigma+1}^2  = \int\zeta\L_{\sigma}\zeta\, d\mu_{\sigma}\le \|\zeta\|_{\sigma} \|\L_{\sigma}\zeta\|_{\sigma}.
\end{equation}

For further reference, we also quote the maximal regularity estimate
\begin{equation}
\label{9}
\|\grad w\|_{\sigma} + \|\grad^2 w\|_{\sigma+2} \lesssim \|\L_{\sigma} w\|_{\sigma},
\end{equation}
cf.\ \cite[Lemma 4.6]{Seis15b}.

Close to the boundary, the operator $\L_{\sigma}$ can be approximated with the linear operator studied in \cite{Kienzler16},
\[
\tilde \L_{\sigma} w = -z_N^{-\sigma}\div\left(z_N^{\sigma+1}\grad w\right) = -z_N\laplace w -(\sigma+1)\partial_N w.
\]
This operator is considered on the halfspace $\R^{N}_+ = \{z\in \R^N:\: z>0\}$. Defining $\tilde \mu_{\sigma} = z_N^{\sigma}dz$ for any $\sigma>-1$ and using the notation $\|\tacka\|_{\sigma}$ for the norm on $L^2(\tilde \mu_{\sigma})$ (slightly abusing notation), then we have, analogously to \eqref{9} that
\begin{equation}
\label{10}
\|\grad w\|_{\sigma} + \|\grad^2 w\|_{\sigma+2} \lesssim \|\tilde \L_{\sigma} w\|_{\sigma}.
\end{equation}

Our first lemma shows how the second order elliptic equation can be transformed onto a problem on the half space.

\begin{lemma}\label{Lemma2}
Suppose that
\[
\L_{\sigma} w = \xi
\]
for some $w$ such that $\spt(w)\subset \overline{B_1(0)}\cap B_{\eps}(e_N)$ for some $\eps>0$. Let $\Phi(z) = \sqrt{1-|z'|^2} e_N - z$ for $z\in \overline{B_1(0)}$. If $\eps$ is sufficiently small, then $\Phi$ is a diffeomorphism on $ \overline{B_1(0)}\cap B_{\eps}(e_N)$. Moreover, $\tilde w$ defined by $\tilde w(\Phi(z)) = w(z)$ solves the equation
\[
\tilde \L_{\sigma} \tilde w = \tilde \xi,
\]
where $A=A(\tilde z)\in \R^{N\times N}$ and $b= b(\tilde z)\in\R^N$ are smooth functions with
\[
\tilde \xi = \xi \circ\Phi^{-1} + A: \tilde\grad^2 \tilde w + b\cdot\tilde \grad\tilde w
\]
with $|A(\tilde z)|\lesssim |\tilde z|^2,\, |b(\tilde z)| \lesssim |\tilde z|$.
\end{lemma}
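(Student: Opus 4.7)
The plan is to verify three things in turn: that $\Phi$ is a diffeomorphism on $\overline{B_1(0)}\cap B_\eps(e_N)$ and factorizes the weight $\rho$ nicely, then to carry out the chain-rule transformation of $\L_\sigma w$, and finally to isolate $\tilde\L_\sigma\tilde w$ and read off the decay properties of the remaining coefficients. The whole argument is driven by one algebraic identity, and the main obstacle is just careful bookkeeping.

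Since $\Phi(e_N)=0$ and $D\Phi(e_N)=-I_N$ (because $\grad\sqrt{1-|z'|^2}$ vanishes at $z'=0$), the inverse function theorem gives a local diffeomorphism with explicit inverse $\Phi^{-1}(\tilde z)=(-\tilde z',\sqrt{1-|\tilde z'|^2}-\tilde z_N)$, which after possibly shrinking $\eps$ is defined on all of $\overline{B_1(0)}\cap B_\eps(e_N)$. The key identity is
\[
1-|z|^2 = \bigl(\sqrt{1-|z'|^2}-z_N\bigr)\bigl(\sqrt{1-|z'|^2}+z_N\bigr) = \tilde z_N\bigl(2\sqrt{1-|\tilde z'|^2}-\tilde z_N\bigr),
\]
which shows both that $\tilde z_N\ge 0$ throughout the image and that $\rho(z)=\tilde z_N\,\eta(\tilde z)$ for a smooth function $\eta$ with $\eta(0)=1$.

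Next I carry out the chain rule. With the shorthand $\alpha_j(z)=z_j/\sqrt{1-|z'|^2}$ for $j<N$, differentiation of $\tilde w\circ\Phi=w$ gives $\partial_{z_j}w=-\partial_{\tilde z_j}\tilde w-\alpha_j\partial_{\tilde z_N}\tilde w$ for $j<N$ and $\partial_{z_N}w=-\partial_{\tilde z_N}\tilde w$. Iterating produces
\[
\laplace w = \tilde\laplace\tilde w + 2\sum_{j<N}\alpha_j\partial_{\tilde z_j\tilde z_N}\tilde w + \Bigl(\sum_{j<N}\alpha_j^2\Bigr)\partial_{\tilde z_N}^2\tilde w - \Bigl(\sum_{j<N}\partial_{z_j}\alpha_j\Bigr)\partial_{\tilde z_N}\tilde w,
\]
while $z_j=-\tilde z_j$ and $z_N=\sqrt{1-|\tilde z'|^2}-\tilde z_N$ yield
\[
z\cdot\grad w = \sum_{j<N}\tilde z_j\partial_{\tilde z_j}\tilde w - \Bigl(\tfrac{1}{\sqrt{1-|\tilde z'|^2}}-\tilde z_N\Bigr)\partial_{\tilde z_N}\tilde w.
\]

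To finish, I substitute these into $\L_\sigma w=-\rho\laplace w+(\sigma+1)z\cdot\grad w$, set the result equal to $\xi\circ\Phi^{-1}$, and add and subtract $\tilde\L_\sigma\tilde w=-\tilde z_N\tilde\laplace\tilde w-(\sigma+1)\partial_{\tilde z_N}\tilde w$ to read off $\tilde\xi$. The $A$-coefficient of $\partial_{\tilde z_i^2}\tilde w$ for $i<N$ equals $\tilde z_N(\eta(\tilde z)-1)=O(\tilde z_N|\tilde z|)$; the mixed-derivative $A$-coefficients involve $\tilde z_N\alpha_j=O(\tilde z_N|\tilde z|)$; and the $\partial_{\tilde z_N}^2\tilde w$ coefficient involves $\tilde z_N\sum_j\alpha_j^2=O(\tilde z_N|\tilde z'|^2)$, so that $|A(\tilde z)|\lesssim|\tilde z|^2$ uniformly near $0$. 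For $b$, the coefficient of $\partial_{\tilde z_j}\tilde w$ is $-(\sigma+1)\tilde z_j$, while the coefficient of $\partial_{\tilde z_N}\tilde w$ combines $(\sigma+1)(1/\sqrt{1-|\tilde z'|^2}-1-\tilde z_N)=O(|\tilde z|)$ with $\rho\sum_j\partial_{z_j}\alpha_j=O(\tilde z_N)$, so $|b(\tilde z)|\lesssim|\tilde z|$. Smoothness of $A$ and $b$ is automatic from that of $\Phi$ on the chosen neighborhood, and the required decay is forced by the factorization of $1-|z|^2$ displayed above; the only real difficulty is the bookkeeping.
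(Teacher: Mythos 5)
Your proposal is correct and follows essentially the same route as the paper: you express $\Phi^{-1}$ explicitly, push $\laplace w$ and $z\cdot\grad w$ through the chain rule, factor $\rho(z)=\tfrac12(1-|z|^2)=\tilde z_N\bigl(\sqrt{1-|\tilde z'|^2}-\tilde z_N/2\bigr)$, and read off the decay of the remainders by Taylor expansion at $\tilde z=0$ — exactly the computation the paper carries out. The one stylistic difference is that you make the factorization of $\rho$ the organizing identity, which is a slightly cleaner way to see why the $\partial_{\tilde z_N}^2$-coefficient picks up the necessary $O(|\tilde z|^2)$ (note that this coefficient also receives the $\tilde z_N(\eta-1)$ contribution from $\tilde z_N\tilde\laplace$, which you omitted to mention but which is harmless since it too is $O(|\tilde z|^2)$); otherwise the content is the same.
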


\begin{proof}
It is clear that $\Phi$ is a diffeomorphism from a small ball around $z=e_N$ into a small neighborhood around the origin in $\R^N_+$. Moreover, a direct calculation and Taylor expansion show that
\begin{eqnarray*}
  \laplace  w &=& \tilde \laplace \tilde w + \frac{2\tilde z'}{\sqrt{1-|\tilde z'|^2} } \cdot\tilde \grad'\tilde\partial_N \tilde w  + \frac{|\tilde z'|^2}{1-|\tilde z'|^2} \tilde \partial_N^2 \tilde w\\
  &&\mbox{} -\left(\frac{N-1}{\sqrt{1-|\tilde z'|^2}} + \frac{|\tilde z'|^2}{(1-|\tilde z'|^2)^{\frac32}}\right)\tilde \partial_N \tilde w\\
  &=& \tilde \laplace \tilde w + A_1(\tilde z): \tilde \grad^2 \tilde w + b_1(\tilde z)\cdot \tilde \grad \tilde w ,\\
  z\cdot\grad w &=& -\sqrt{1-|\tilde z'|^2}\tilde \partial_N \tilde w  + \tilde z'\cdot \tilde \grad'\tilde w + \left(\tilde z_N -\frac{|\tilde z'|^2}{\sqrt{1-|\tilde z'|^2}}\right)\tilde \partial_N\tilde w\\
  &=& -\tilde \partial_N \tilde w + b_2(\tilde z)\cdot \tilde \grad\tilde w,\\
 \rho(z) &=& \sqrt{1-|\tilde z'|^2}\tilde z_N  - \frac12\tilde z_N^2\ = \ \tilde z_N + c(\tilde z),
\end{eqnarray*}
where $|A_1(\tilde z)|\lesssim |\tilde z|$, $|b_1(\tilde z)|\lesssim 1$, $|b_2(\tilde z)|\lesssim |\tilde z|$, $|c(\tilde z)|\lesssim |\tilde z|^2$. We easily infer the statement.
\end{proof}

A helpful tool in the derivation of the $L^2_{\sigma}$ maximal regularity estimates for our parabolic problem \eqref{7} will be the following estimate for the Cartesian problem.

\begin{lemma}\label{Lemma3}
Suppose $\tilde w$ is a smooth solution of the equation
\[
\tilde \L_{\sigma} \tilde w = \tilde \xi
\]
for some smooth $\xi$. Then
\[
\|\tilde \grad^2 \tilde w\|_{\sigma} + \|\tilde\grad^3\tilde w\|_{\sigma+2} + \|\tilde \grad^4\tilde w\|_{\sigma+4} \lesssim \|\tilde \grad \tilde \xi\|_{\sigma} + \|\tilde \grad^2\tilde \xi\|_{\sigma+2}.
\]
\end{lemma}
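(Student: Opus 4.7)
The plan is to apply the basic maximal regularity estimate \eqref{10} to carefully chosen derivatives of $\tilde w$, exploiting two structural features of $\tilde\L_\sigma = -z_N \laplace - (\sigma+1)\partial_N$: tangential derivatives $\partial_i$ with $i<N$ commute with $\tilde\L_\sigma$ exactly, while one normal differentiation turns $\tilde\L_\sigma$ into $\tilde\L_{\sigma+1}$ modulo a purely tangential error. This allows me to upgrade \eqref{10} by one derivative order, with the weight index tracking precisely the number of normal differentiations performed.

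First, for each tangential index $i<N$, the identity $\tilde\L_\sigma(\partial_i \tilde w) = \partial_i \tilde\xi$ combined with \eqref{10} yields
\[
\|\tilde\grad \partial_i \tilde w\|_\sigma + \|\tilde\grad^2 \partial_i \tilde w\|_{\sigma+2} \lesssim \|\tilde\grad \tilde\xi\|_\sigma,
\]
so that all second-order derivatives of $\tilde w$ involving a tangential direction are controlled in $L^2_\sigma$ and all third-order derivatives involving a tangential direction in $L^2_{\sigma+2}$. Next, the direct computation
\[
\partial_N^2 \tilde\L_\sigma \tilde w \;=\; \tilde\L_{\sigma+2}(\partial_N^2 \tilde w) - 2\partial_N \laplace' \tilde w, \qquad \laplace' := \sum_{i<N}\partial_i^2,
\]
shows that \eqref{10} at the shifted index $\sigma+2$ applied to $\partial_N^2 \tilde w$ produces
\[
\|\tilde\grad \partial_N^2 \tilde w\|_{\sigma+2} + \|\tilde\grad^2 \partial_N^2 \tilde w\|_{\sigma+4} \lesssim \|\tilde\grad^2 \tilde\xi\|_{\sigma+2} + \|\tilde\grad \tilde\xi\|_\sigma,
\]
once the tangential remainder $2\partial_N \laplace' \tilde w$ has been absorbed into the first step. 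This controls $\partial_N^3 \tilde w$ in $L^2_{\sigma+2}$ and every fourth-order derivative possessing at least two normal directions in $L^2_{\sigma+4}$. The remaining fourth-order components, those with two or more tangential directions, are obtained analogously by applying $\partial_i\partial_j$ with $i,j<N$ to $\tilde\L_{\sigma+2}\tilde w = \tilde\xi - 2\partial_N \tilde w$ and invoking \eqref{10} at index $\sigma+2$.

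The final missing piece, $\partial_N^2 \tilde w$ in $L^2_\sigma$, I would read off directly from the equation differentiated once in $\partial_N$, rearranged as
\[
(\sigma+2)\partial_N^2 \tilde w \;=\; -\partial_N \tilde\xi - \laplace' \tilde w - z_N \partial_N \laplace \tilde w,
\]
where $\|z_N \partial_N \laplace \tilde w\|_\sigma = \|\partial_N \laplace \tilde w\|_{\sigma+2}$ splits into a tangential piece handled by the first step and $\partial_N^3 \tilde w$ handled by the second step. The main obstacle is the weight-index bookkeeping: naively, a bound on $\partial_N^2 \tilde w$ in $L^2_\sigma$ appears to require a bound on $\partial_N^3 \tilde w$ in $L^2_{\sigma+2}$, which in turn would require $\partial_N^4 \tilde w$ in $L^2_{\sigma+4}$, and so on. What breaks the circularity is the observation that each normal differentiation raises the index of $\tilde\L_\sigma$ by exactly one, so that applying \eqref{10} at a higher index converts this shift into an extra derivative while preserving the symbolic structure; the recursion terminates precisely at $\tilde\grad^2 \tilde\xi$ in $L^2_{\sigma+2}$, which is exactly what the right-hand side of Lemma \ref{Lemma3} permits.
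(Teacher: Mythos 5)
Your proof is correct and takes essentially the same approach as the paper's: both exploit the exact commutation of tangential derivatives with $\tilde\L_\sigma$, the commutator identity $\tilde\partial_N\tilde\L_\sigma=\tilde\L_{\sigma+1}\tilde\partial_N-\tilde\laplace'$ (you just apply it twice to get the $\tilde\L_{\sigma+2}$ identity directly), the shifted-index estimate \eqref{10}, and the algebraic rearrangement $(\sigma+2)\tilde\partial_N^2\tilde w=-\tilde\partial_N\tilde\xi-\tilde z_N\tilde\laplace\tilde\partial_N\tilde w-\tilde\laplace'\tilde w$ for the final $L^2_\sigma$ bound on $\tilde\partial_N^2\tilde w$. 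The only difference is presentational: you handle the doubly-normal case before the doubly-tangential one, whereas the paper does the reverse.
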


\begin{proof}
We start with the derivation of higher order tangential regularity. Since $\tilde \L_{\sigma}$ commutes with $\tilde \partial_i$ for any $i\in\{1,\dots,N-1\}$, differentiation yields $\tilde \L_{\sigma} \tilde \partial_i \tilde w = \tilde \partial_i\tilde \xi$, and thus via \eqref{10},
\[
\|\tilde \grad\tilde \partial_i \tilde w\|_{\sigma} + \|\tilde \grad^2 \tilde \partial_i \tilde w\|_{\sigma+2} \lesssim \|\tilde \grad\tilde \xi\|_{\sigma}.
\]
We take second order derivatives in tangential direction and rewrite the resulting equation as $\tilde \L_{\sigma+2} \tilde \partial_{ij} \tilde w = \tilde \partial_{ij} \tilde \xi - 2\tilde\partial_{ijN}\tilde w$, where $i,j\in\{1,\dots,N-1\}$. From the above estimate and \eqref{10} we obtain
\[
\|\tilde \grad \tilde\partial_{ij} \tilde w\|_{\sigma+2} + \|\tilde \grad^2\tilde \partial_{ij} \tilde w\|_{\sigma+4} \lesssim \|\tilde \grad\tilde \xi\|_{\sigma} + \|\tilde \grad^2\tilde \xi\|_{\sigma+2}.
\]
Transversal derivatives do not commute with $\tilde \L_{\sigma}$. Instead, it holds that $\tilde \partial_N \tilde \L_{\sigma} = \tilde \L_{\sigma+1} \tilde \partial_N  - \tilde \laplace'$. A double differentiation in transversal direction yields thus $\tilde \L_{\sigma+2}\tilde \partial_N^2 \tilde w = \tilde \partial_N^2 \tilde \xi  + 2\tilde \laplace' \tilde \partial_N\tilde w$. We invoke \eqref{10}  and obtain with the help of the previous estimate
\[
\|\tilde \grad\tilde \partial_N^2 \tilde w\|_{\sigma+2} + \|\tilde\grad^2 \tilde \partial_N^2 \tilde w\|_{\sigma+4} \lesssim  \|\tilde \grad \tilde \xi\|_{\sigma} + \|\tilde \grad^2\tilde \xi\|_{\sigma+2}.
\]
Finally, the control of $\tilde \partial_N^2 \tilde w$ follows by using the transversally differentiated equation in the sense of $(\sigma+2) \tilde \partial_N^2 \tilde w = -\tilde \partial_N\tilde \xi - \tilde z_N\tilde \laplace\tilde \partial_N\tilde w - \tilde \laplace'\tilde w$ and the previous bounds.
\end{proof}

\subsection{Energy estimates}\label{SS:energy}

In this subsection, we derive the basic well-posedness result, maximal regularity estimates and local estimates in the Hilbert space setting. We start with existence and uniqueness.

\begin{lemma}\label{Lemma1}
Let $0< T\le \infty$ and $f\in L^2((0,T); L^2_{\sigma})$, $g\in L^2_{\sigma}$. Then there exists a unique weak solution to \eqref{7}. Moreover, it holds
\[
\sup_{ (0,T)}\|w\|_{\sigma}^2 + \int_0^T \|\grad w\|_{\sigma}^2\, dt + \int_0^T \|\grad^2 w \|_{\sigma+2}^2\, dt\lesssim \int_0^T \|f\|_{\sigma}^2\, dt + \|g\|_{\sigma}^2.
\]
\end{lemma}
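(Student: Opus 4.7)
My plan is a standard Hilbert-space/Galerkin approach tailored to the weighted geometry: derive a formal energy identity, upgrade it to $\grad w$ and $\grad^2 w$ bounds via the elliptic maximal regularity \eqref{9}, and then realize solutions through a Lions/Galerkin construction.

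For the \emph{a priori} estimate, I test the equation against $w$ with respect to $d\mu_\sigma$. Since $\L_\sigma$ is symmetric in $L^2_\sigma$ and $\int \L_\sigma w \cdot w\, d\mu_\sigma = \int \rho |\grad w|^2\, d\mu_\sigma = \|\grad w\|_{\sigma+1}^2$ by integration by parts against the divergence form $\L_\sigma w = -\rho^{-\sigma}\div(\rho^{\sigma+1}\grad w)$ (the boundary term is absorbed by the vanishing weight $\rho^{\sigma+1}$ at $\partial B_1(0)$), one obtains
\begin{equation*}
\tfrac{1}{2}\tfrac{d}{dt}\|w\|_\sigma^2 + \|\L_\sigma w\|_\sigma^2 + n\|\grad w\|_{\sigma+1}^2 = \int f w\, d\mu_\sigma \le \tfrac{1}{2}\|f\|_\sigma^2 + \tfrac{1}{2}\|w\|_\sigma^2.
\end{equation*}
Integrating in $t$ and applying Gr\"onwall's inequality yields the $L^\infty_t L^2_\sigma$ bound together with $\int_0^T \|\L_\sigma w\|_\sigma^2\, dt \lesssim \int_0^T \|f\|_\sigma^2\, dt + \|g\|_\sigma^2$. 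Feeding this bound into the elliptic maximal regularity estimate \eqref{9} instantaneously upgrades it to the claimed control of $\|\grad w\|_\sigma$ and $\|\grad^2 w\|_{\sigma+2}$.

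For existence I would set up the abstract framework with Gelfand triple $V \hookrightarrow H \hookrightarrow V^*$, where $H = L^2_\sigma$ and $V = \{w \in H : \L_\sigma w \in H\}$ endowed with the inner product $\la w,\zeta\ra_V = \la \L_\sigma w, \L_\sigma \zeta\ra_\sigma + n\la \grad w, \grad \zeta\ra_{\sigma+1} + \la w,\zeta\ra_\sigma$. The time-independent bilinear form
\[
a(w,\zeta) = \int \L_\sigma w\, \L_\sigma \zeta\, d\mu_\sigma + n\int \grad w \cdot \grad \zeta\, d\mu_{\sigma+1}
\]
is continuous on $V\times V$ and, by \eqref{9}, satisfies the weak coercivity $a(w,w) + \|w\|_\sigma^2 \gtrsim \|w\|_V^2$. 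The classical Lions theorem on parabolic variational equations then produces a unique $w \in L^2((0,T);V)$ with $\partial_t w \in L^2((0,T);V^*)$ satisfying \eqref{8a} and the initial condition in the sense of traces; by density in $\zeta$ the formulation \eqref{8a} is recovered. Alternatively, one can perform a Galerkin approximation using a smooth orthonormal basis of $L^2_\sigma$ (e.g.\ the eigenfunctions of $\L_\sigma$ constructed via the compact embedding following from \eqref{9}), carry out the energy estimate on the Galerkin level, and pass to the limit.

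Uniqueness is immediate: the difference of two solutions with the same data satisfies $\partial_t w + \L_\sigma^2 w + n\L_\sigma w = 0$, so the energy identity gives $\|w(t)\|_\sigma \equiv 0$. The only delicate point is justifying the integration-by-parts with the degenerate weight $\rho^{\sigma+1}$ when testing against $w$ itself, rather than a smooth compactly-supported test function $\zeta$; this is where I expect to spend a little care, either by approximating $w$ by mollified/truncated smooth functions (using that $w\in L^2_tV$ implies $w(t,\cdot)$ belongs to the weighted Sobolev space where the boundary integral vanishes) or by realising the weak formulation in the equivalent form $\la \partial_t w, \zeta\ra_{V^*,V} + a(w,\zeta) = \la f, \zeta\ra_\sigma$ and then taking $\zeta = w$ pointwise in $t$, as permitted by the standard $w \in L^2_t V$, $\partial_t w \in L^2_t V^*$ chain rule.
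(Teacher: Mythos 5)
Your argument is correct but takes a genuinely different existence route from the paper: you set up a Gelfand triple $V\hookrightarrow H\hookrightarrow V^*$ with $H=L^2_\sigma$ and appeal to Lions's theorem on parabolic variational problems (or, equivalently, a Galerkin scheme on eigenfunctions of $\L_\sigma$), whereas the paper constructs the solution by Rothe's method, i.e.\ an implicit Euler discretization in time: it solves the resolvent problem $\tfrac1h w + (\L_\sigma^2+n\L_\sigma)w = f$ at each step, citing \cite[Appendix]{Seis14} for the analogous second-order elliptic problem, records the uniform estimate $h^{-1/2}\|w\|_\sigma + \|\L_\sigma w\|_\sigma\lesssim\|f\|_\sigma$, and passes to the limit by compactness. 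Both routes rest on the same two ingredients — the symmetry and nonnegativity of $\L_\sigma$ (giving the energy identity $\tfrac12\tfrac{d}{dt}\|w\|_\sigma^2 + \|\L_\sigma w\|_\sigma^2 + n\|\grad w\|_{\sigma+1}^2 = \int fw\,d\mu_\sigma$) and the elliptic maximal regularity \eqref{9} to upgrade the $\L_\sigma w$ bound to control of $\grad w$ and $\grad^2 w$. The Lions framework has the advantage that your ``test against $w$'' manipulation is justified automatically by the $L^2_t V$, $\partial_t w\in L^2_t V^*$ chain rule, which you correctly flag as the delicate point, and the Gårding inequality $a(w,w)+\|w\|_\sigma^2\gtrsim\|w\|_V^2$ is trivial with your choice of $V$-norm; the Rothe scheme buys elementary step-by-step estimates with no degenerate integration-by-parts to worry about, at the cost of a compactness passage. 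One small remark on the estimate itself: since the mean mode evolves by $\tfrac{d}{dt}\avint w\,d\mu_\sigma=\avint f\,d\mu_\sigma$ and is undamped by $\L_\sigma^2+n\L_\sigma$, a $T$-uniform bound for $\sup\|w\|_\sigma^2$ in terms of $\int_0^T\|f\|_\sigma^2\,dt$ cannot hold in general, and your Young--Gr\"onwall route indeed produces an exponential $T$-dependence; the sharper manipulation $\tfrac{d}{dt}\|w\|_\sigma\le\|f\|_\sigma$ gives a polynomial factor instead, but some $T$-dependence is unavoidable without a mean-zero hypothesis on $f$ — a caveat that applies equally to the paper's terse argument.
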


\begin{proof}
Existence of weak solutions can be proved, for instance, by using an implicit Euler scheme. Indeed, thanks to \eqref{9}, it is easily seen that for any $h>0$ and $f\in L^2_{\sigma}$ the elliptic problem 
\[
\frac1h w +(\L_{\sigma}^2 +n\L_{\sigma})  w =f
\]
has a unique solution $w$ satisfying $\L_{\sigma}w \in L^2_{\sigma}$, see \cite[Appendix]{Seis14} for the analogous second order problem. This solution satisfies the a priori estimate
\[
\frac1{\sqrt{h}} \|w\|_{\sigma} + \|\L_{\sigma} w\|_{\sigma}  \lesssim \|f\|_{\sigma}.
\]
With these insights, it is an exercise to construct time-discrete solutions to \eqref{7}, and standard compactness arguments allow for passing to the limit, both in the equation and in the estimate.  In view of the linearity of the equation, uniqueness follows immediately. This concludes the proof of the lemma.
\end{proof}

Our next result is a maximal regularity estimate for the homogeneous problem.

\begin{lemma}\label{Lemma4}
Let $w$ be a solution to the initial value problem \eqref{7} with $g=0$ and $f\in L^2((0,T);L^2_{\sigma}))$ for some $0<T\le\infty$. Then the mappings $t\mapsto \|w(t)\|_{\sigma}$ and $t\mapsto \|\grad w(t)\|_{\sigma+1}$ are continuous on $[0,T]$ and $\grad^2w, \grad^3 w, \rho\grad^4 w\in L^2((0,T);L_{\sigma}^2))$ with
\[
\|\partial_t w\|_{L^2(L^2_{\sigma})} + \|\grad^2 w\|_{L^2(L^2_{\sigma})} + \|\grad^3 w\|_{L^2(L^2_{\sigma+2})} + \|\grad^4 w\|_{L^2(L^2_{\sigma+4})} \lesssim \|f\|_{L^2(L^2_{\sigma})}.
\]
\end{lemma}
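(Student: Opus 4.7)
The plan is to derive all four $L^2_t(L^2_\sigma)$ bounds as a priori estimates on a sufficiently smooth approximation of $w$ (for example the implicit-Euler discretization from the proof of Lemma \ref{Lemma1}) and then pass to the limit. First I would test \eqref{7} against $\partial_t w$. Using the self-adjointness of $\L_\sigma$ with respect to $\mu_\sigma$ together with $\int \phi\, \L_\sigma \psi \, d\mu_\sigma = \int \grad\phi\cdot\grad\psi \, d\mu_{\sigma+1}$, the higher-order terms become perfect time derivatives and one obtains
\[
\|\partial_t w\|_\sigma^2 + \frac{1}{2}\frac{d}{dt}\bigl(\|\L_\sigma w\|_\sigma^2 + n\|\grad w\|_{\sigma+1}^2\bigr) = \int f\,\partial_t w \, d\mu_\sigma.
\]
Young's inequality, the vanishing initial datum $g=0$, and integration in $t$ yield $\|\partial_t w\|_{L^2(L^2_\sigma)} \lesssim \|f\|_{L^2(L^2_\sigma)}$ together with the pointwise bound $\sup_t \|\L_\sigma w(t)\|_\sigma \lesssim \|f\|_{L^2(L^2_\sigma)}$. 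For $n>0$, testing against $\L_\sigma w$ in the same fashion further produces $\|\L_\sigma w\|_{L^2(L^2_\sigma)} \lesssim \|f\|_{L^2(L^2_\sigma)}$. The two continuity statements are then immediate: $\partial_t w \in L^2(L^2_\sigma)$ gives the continuity of $t\mapsto\|w(t)\|_\sigma$, while $\frac{d}{dt}\|\grad w\|_{\sigma+1}^2 = 2\int \partial_t w\cdot \L_\sigma w\, d\mu_\sigma \in L^1(0,T)$ gives the second.

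For the three remaining spatial bounds I would view $\eta := \L_\sigma w$ as the solution of the second-order elliptic problem
\[
\L_\sigma \eta = f - \partial_t w - n\eta,
\]
whose right-hand side is controlled in $L^2((0,T); L^2_\sigma)$ by the previous step. Applying \eqref{9} to $\eta$ and integrating in time yields
\[
\|\grad \L_\sigma w\|_{L^2(L^2_\sigma)} + \|\grad^2 \L_\sigma w\|_{L^2(L^2_{\sigma+2})} \lesssim \|f\|_{L^2(L^2_\sigma)}.
\]
These are exactly the quantities that feed the right-hand side of Lemma \ref{Lemma3}. Interpreting $\L_\sigma w = \eta$ as an elliptic equation for $w$, Lemma \ref{Lemma3} delivers the three targeted norms $\|\grad^2 w\|_\sigma$, $\|\grad^3 w\|_{\sigma+2}$, $\|\grad^4 w\|_{\sigma+4}$; since it is phrased for the flat operator $\tilde \L_\sigma$ on the half-space, I would deploy it through a fixed smooth partition of unity on $\overline{B_1(0)}$. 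On the interior piece $\{\rho\ge \delta\}$ the operator $\L_\sigma$ is uniformly elliptic and standard Calder\'on--Zygmund theory applies, while in each boundary patch the diffeomorphism $\Phi$ of Lemma \ref{Lemma2} straightens the sphere and produces an equation $\tilde\L_\sigma \tilde w = \tilde \xi$ on a ball in $\R^N_+$ to which Lemma \ref{Lemma3} applies directly.

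The substantive point, and the step I expect to require the most care, is the bookkeeping in the boundary patches. The change of variables in Lemma \ref{Lemma2} contributes extra terms $A:\tilde \grad^2 \tilde w + b\cdot \tilde\grad \tilde w$ to $\tilde \xi$, and differentiating $\tilde \xi$ once or twice to apply Lemma \ref{Lemma3} generates further terms involving up to $\tilde\grad^3 \tilde w$ and $\tilde \grad^4 \tilde w$. The vanishing orders $|A(\tilde z)|\lesssim |\tilde z|^2$ and $|b(\tilde z)| \lesssim |\tilde z|$ match the extra powers of $\tilde z_N \sim \rho$ needed in order to absorb these contributions into the weighted norms $\|\grad^2 w\|_{\sigma+2}$ and $\|\grad^3 w\|_{\sigma+4}$ already provided by \eqref{9} applied to $w$ and by the bound on $\|\grad^2 \L_\sigma w\|_{L^2(L^2_{\sigma+2})}$. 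Summing over a finite cover of $\overline{B_1(0)}$ and absorbing then yields the stated estimate.
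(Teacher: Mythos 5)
Your proposal follows essentially the same route as the paper: test against $\partial_t w$ to control $\|\partial_t w\|_{L^2(L^2_\sigma)}$ and obtain the continuity statements, then recast the problem as the elliptic equation $\L_\sigma w=\xi$ with $\xi=\L_\sigma w$, apply \eqref{9} to $\xi$ via $\L_\sigma\xi=f-\partial_t w-n\L_\sigma w$ to control $\|\grad\xi\|_\sigma+\|\grad^2\xi\|_{\sigma+2}$, and establish the elliptic estimate \eqref{11} by combining interior bounds with boundary bounds obtained from the flattening map of Lemma \ref{Lemma2} and the half-space estimate of Lemma \ref{Lemma3}, followed by a covering argument. The only cosmetic differences are that the paper's interior estimates proceed by directly differentiating the equation and reapplying \eqref{9} rather than invoking standard Calder\'on--Zygmund theory, and the paper localizes with a cut-off function rather than a partition of unity; both are equivalent.
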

In the statement of the lemma, we have written $L^2(L^2_{\sigma})$ for $L^2((0,T);L^2_{\sigma})$.
\begin{proof}
We perform a quite formal argument that can be made rigorous by using the customary approximation procedures. Choosing $\zeta = \chi_{[t_1,t_2]} \partial_t w$ as a test function in \eqref{8a}, we have the identity
\begin{eqnarray*}
\lefteqn{\int_{t_1}^{t_2} \|\partial_t w\|_{\sigma}^2\, dt +\frac12 \|\L_{\sigma} w(t_2)\|_{\sigma}^2 +\frac{n}2 \|\grad w(t_2)\|_{\sigma+1}}\\
&=& \int_{t_1}^{t_2} \int f \partial_t w\, d\mu_{\sigma}dt + \frac12 \|\L_{\sigma} w(t_1)\|_{\sigma} + \frac{n}2 \|\grad w(t_1)\|_{\sigma+1}.
\end{eqnarray*}
Combining this bound with the estimate from Lemma \ref{Lemma1}, we deduce that the mappings $t\mapsto \|w(t)\|_{\sigma}$ and $t\mapsto \|\grad w(t)\|_{\sigma+1}$ are continuous. Moreover,
\[
\int_0^T \|\partial_t w\|_{\sigma}^2\, dt \lesssim \int_0^T \|f\|^2_{\sigma}\, dt
\]
because of $g=0$. A similar estimate holds for $\L_{\sigma}w$ by the virtue of Lemma \ref{Lemma1}, and the statement thus follows upon proving
\begin{equation}
\label{11}
\|\grad^2 w\|_{\sigma}  + \|\grad^3 w\|_{\sigma+2} + \|\grad^4 w\|_{\sigma+4}\lesssim \|\xi\|_{\sigma}+ \|\grad \xi\|_{\sigma} + \|\grad^2 \xi\|_{\sigma+2} ,
\end{equation}
for any solution of the elliptic problem $\L_{\sigma} w = \xi$, because the right-hand side is bounded by $\|f\|_{\sigma}$ thanks to \eqref{9} and Lemma \ref{Lemma1}. It is not difficult to obtain estimates in the interior of $B_1(0)$. For instance, since  $\L_{\sigma+2} \partial_i w = \partial_i \xi - z_i \laplace w + 2z\cdot \grad \partial_i w -(\sigma+1) \partial_i w $ is bounded in $L^2_{\sigma+2}$ and because $L^2_{\sigma}\subset L^2_{\sigma+2}$,  an application of \eqref{9} yields that 
\begin{equation}
\label{12}
\|\grad^2 w\|_{\sigma+2} + \|\grad^3 w\|_{\sigma+4}\lesssim \|\partial_i \xi \|_{\sigma}  + \|\grad w\|_{\sigma} + \|\grad^2 w\|_{\sigma+2}\lesssim \|\xi\|_{\sigma} + \|\grad \xi\|_{\sigma}.
\end{equation}
Since $\rho \sim 1$ in the interior of $B_1(0)$, this estimate entails the desired control of the second and third order derivatives in the interior of $B_1(0)$. Fourth order derivatives can be estimated similarly.

To derive estimates at the boundary of $B_1(0)$, it is convenient to locally flatten the boundary. For this purpose, we localize the equation with the help of a smooth cut-off function $\eta$ that is supported in a small ball centered at a given boundary point, say $e_N$,
\[
\L_{\sigma} (\eta w) = \eta \xi -2\rho \grad \eta\cdot \grad w -\rho \laplace \eta w +(\sigma+1)z\cdot \grad \eta w=:\tilde \xi.
\]
A short computation shows that $\|\tilde \xi\|_{\sigma} + \|\grad \tilde \xi \|_{\sigma} + \|\grad^2\tilde \xi\|_{\sigma+2}\lesssim \| \xi\|_{\sigma} + \|\grad  \xi \|_{\sigma} + \|\grad^2\xi\|_{\sigma+2} $, where we have used \eqref{9} and \eqref{12} and the Hardy--Poincar\'e inequality $\|w\|_{\sigma}\lesssim \|\grad w\|_{\sigma+1}$ from \cite[Lemma 3]{Seis14}. (For this, notice that we can assume that $w$ has zero average because solutions to $\L_{\sigma} w=\xi$ are unique up to constants.) Establishing \eqref{11} for this localized equation is now a straight forward calculation based on the transformation from Lemma \ref{Lemma2} and the a priori estimate from Lemma \ref{Lemma3}. A covering argument concludes the proof.
\end{proof}

A crucial step in the derivation of the Gaussian estimates is the following local estimate.

\begin{lemma}\label{Lemma5}
Let $0<\hat \eps < \eps<1$ and $0<\delta <\hat \delta<1$ be given. Let $w$ be a solution to the inhomogeneous equation \eqref{7}. Then the following holds for any $z_0\in \overline{B_1(0)}$, $\tau\ge 0$ and $0<r\lesssim 1$:
\begin{eqnarray*}
\lefteqn{\iint_{Q} (\partial_t w)^2\, d\mu_{\sigma}dt +  \frac{\theta(r,z_0)^4}{r^4}\iint_
Q |\grad^2 w|^2\, d\mu_{\sigma}dt}\\
&&\mbox{} + \frac{\theta(r,z_0)^2}{r^2} \iint_Q |\grad^3 w|^2\, d\mu_{\sigma+2}dt + \iint_Q |\grad^4 w|^2\, d\mu_{\sigma+4} dt\\
&&\mbox{}\hspace{1em}\lesssim \iint_{\widehat Q} f^2\, d\mu_{\sigma}dt + \frac1{r^8} \iint_{\widehat Q} w^2 + r^2\theta(r,z_0)^2 |\grad w |^2\, d\mu_{\sigma}dt,
\end{eqnarray*}
where $Q = (\tau + \eps r^2,\tau +r^2)\times B_{\delta r}^d(z_0)$ and $\widehat Q = (\tau + \hat \eps r^2,\tau +r^2)\times B_{\hat \delta r}^d(z_0)$.

\end{lemma}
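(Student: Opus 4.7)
The estimate is a Caccioppoli-type local energy inequality for the fourth-order parabolic operator $\partial_t + \L_\sigma^2 + n\L_\sigma$ of equation \eqref{7}, and my plan follows the classical route: localize with a cutoff adapted to the intrinsic geometry, test the equation against progressively more aggressive test functions, and absorb boundary-of-cutoff terms via Young's inequality. I would fix a smooth cutoff $\eta=\eta(t,z)$ with $\eta\equiv 1$ on $Q$, $\spt\eta\subset\widehat Q$, and derivative bounds $|\partial_t\eta|\lesssim r^{-4}$ and $|\nabla^j\eta|\lesssim (r\theta(r,z_0))^{-j}$ for $j=1,\dots,4$; such an $\eta$ exists because intrinsic and Euclidean balls are equivalent by \eqref{8d}. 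The estimates \eqref{8b}--\eqref{8c} give $\theta(r,\cdot)\sim\theta(r,z_0)$ and $\rho\lesssim\theta(r,z_0)^2$ throughout $\widehat Q$, which is the key to converting every $\rho$-factor produced by the computation into an explicit power of $\theta(r,z_0)$.

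The first step is to test \eqref{7} against $\eta^{2k}w$ for a sufficiently large integer $k$. Using the Dirichlet form identity $\int\L_\sigma\phi\cdot\psi\,d\mu_\sigma=\int\rho\nabla\phi\cdot\nabla\psi\,d\mu_\sigma$ twice, integrating once in time, and expanding via the Leibniz rule
\[
\L_\sigma(\eta^{2k}w)=\eta^{2k}\L_\sigma w + w\,\L_\sigma(\eta^{2k}) - 2\rho\,\nabla(\eta^{2k})\cdot\nabla w,
\]
one produces on the LHS the principal quadratic term $\iint\eta^{2k}|\L_\sigma w|^2\,d\mu_\sigma dt$, together with cross terms involving $\L_\sigma w$ paired with either $w\,\L_\sigma(\eta^{2k})$ or $\rho\,\nabla(\eta^{2k})\cdot\nabla w$, and the time-derivative contribution $\iint(\partial_t\eta^{2k})w^2\,d\mu_\sigma dt$. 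A careful application of Cauchy--Schwarz absorbs the $\L_\sigma w$-factor back into the main term and, after using the cutoff bounds together with $\rho\lesssim\theta^2$, produces a bound of the form
\[
\iint_{Q'}|\L_\sigma w|^2\,d\mu_\sigma dt \lesssim \iint_{\widehat Q}f^2\,d\mu_\sigma dt + \frac{1}{r^4}\iint_{\widehat Q}w^2\,d\mu_\sigma dt + \frac{\theta(r,z_0)^2}{r^2}\iint_{\widehat Q}|\nabla w|^2\,d\mu_\sigma dt
\]
on an intermediate cylinder $Q'$ lying between $Q$ and $\widehat Q$. Since $|\L_\sigma w|^2\sim\rho^2|\Delta w|^2$ modulo lower order and $\rho\sim\theta(r,z_0)^2$, this simultaneously delivers the $\theta^4/r^4$-weighted $\nabla^2 w$ bound appearing in the claim.

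The second step is to test \eqref{7} against $\eta^{2k}\partial_t w$ on a further intermediate cylinder sitting between $Q$ and $Q'$. The crucial identity
\[
\int\eta^{2k}\L_\sigma w\cdot\L_\sigma\partial_t w\,d\mu_\sigma = \tfrac12\partial_t\!\int\eta^{2k}|\L_\sigma w|^2\,d\mu_\sigma - k\!\int\eta^{2k-1}(\partial_t\eta)|\L_\sigma w|^2\,d\mu_\sigma
\]
converts the fourth-order term into an energy evolution for $|\L_\sigma w|^2$ plus a commutator weighted by $|\partial_t\eta|\lesssim r^{-4}$. Integrating in time and controlling the commutator by $r^{-4}\iint_{Q'}|\L_\sigma w|^2$, which by the first step is in turn bounded by $r^{-4}\bigl(\iint_{\widehat Q}f^2+r^{-4}\iint w^2+r^{-2}\theta^2\iint|\nabla w|^2\bigr)$, one arrives at the desired bound on $\iint_Q(\partial_t w)^2\,d\mu_\sigma dt$ with precisely the $r^{-8}$ and $r^{-6}\theta(r,z_0)^2$ prefactors appearing on the RHS of the claim. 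At this stage $\partial_t w$ and $\L_\sigma w$ are controlled in $L^2$ on $Q$, so the pointwise identity $\L_\sigma^2 w = f - \partial_t w - n\L_\sigma w$ becomes a fourth-order elliptic equation with $L^2$ right-hand side. Applying the elliptic maximal regularity of Lemma~\ref{Lemma3} to $\L_\sigma w$, after flattening the boundary via Lemma~\ref{Lemma2} in the regime $\sqrt{\rho(z_0)}\lesssim r$ (and using classical interior regularity in the regime $\sqrt{\rho(z_0)}\gg r$, where \eqref{8c} makes the operator uniformly elliptic on $\widehat Q$), yields the $\nabla^3 w$ and $\nabla^4 w$ bounds with weights $d\mu_{\sigma+2}$ and $d\mu_{\sigma+4}$. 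A finite iteration between concentric cylinders absorbs the implicit radius-shrinking into universal constants, and a standard covering argument completes the proof.

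The main technical obstacle will be the careful bookkeeping of weights. Each application of the product rule distributes $\rho$-factors (from the Dirichlet form and from the leading $-\rho\Delta$ in $\L_\sigma$) together with $\nabla^j\eta$-factors from the cutoff; balancing these so that the residue on the RHS collapses to the clean form $r^{-8}$ and $r^{-6}\theta(r,z_0)^2$ requires repeated use of $\rho\lesssim\theta(r,z_0)^2$ on $\spt\eta$ and of $\theta(r,\cdot)\sim\theta(r,z_0)$. A secondary difficulty is that transversal derivatives do not commute with $\L_\sigma$, so the upgrade from an $L^2$ bound on $\L_\sigma w$ to $\nabla^4 w\in L^2_{\sigma+4}$ has to proceed via the iterated commutator identities already exercised in the proof of Lemma~\ref{Lemma3}, reproduced here in localized form through the chart $\Phi$ of Lemma~\ref{Lemma2}.
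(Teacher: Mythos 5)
Your overall framework --- intrinsic cutoff, testing against the solution, then testing against higher-order quantities, then maximal regularity --- mirrors the paper's structure, and your first step (testing against $\eta w$, here $\eta^{2k}w$) gives essentially estimate \eqref{14} of the paper. However, the passage from the $L^2$ bounds on $\partial_t w$ and $\L_\sigma w$ to the third- and fourth-order bounds has a genuine circularity that your proposal does not address. You plan to read $\L_\sigma^2 w = f - \partial_t w - n\L_\sigma w$ as an elliptic equation with an $L^2$ right-hand side and apply Lemma~\ref{Lemma3} after localization. But $\L_\sigma^2(\eta w)$ (or, equivalently, $\L_\sigma(\eta\L_\sigma w)$) produces commutator terms such as $\L_\sigma(\rho\grad\eta\cdot\grad w)$ and $\rho\grad\eta\cdot\grad\L_\sigma w$, which involve third-order derivatives of $w$ (equivalently first derivatives of $\L_\sigma w$). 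An $L^2$ bound on these commutators is therefore exactly what you are trying to prove, not something you already have; the localized right-hand side is \emph{not} yet in $L^2_\sigma$ after your step 2. Similarly, applying Lemma~\ref{Lemma3} with $\tilde w$ replaced by $\L_\sigma w$ would require $\grad$ and $\grad^2$ of $f-\partial_t w$, which you do not control.

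The paper breaks this circularity by inserting an intermediate test: one tests the localized equation against $\L_\sigma(\eta w)$, which produces $\iint|\grad\L_\sigma(\eta w)|^2\,d\mu_{\sigma+1}$ on the left. This is estimate \eqref{15}, and it is precisely the missing $\grad^3 w$-level information that lets the localized source $\tilde f$ be placed in $L^2_\sigma$. Only after that does the paper invoke the \emph{parabolic} maximal regularity of Lemma~\ref{Lemma4} (not the elliptic Lemma~\ref{Lemma3}) on the evolution equation for $\eta w$, yielding \eqref{15a}. Finally, the $\theta^4/r^4$ and $\theta^2/r^2$ prefactors are not produced in a single step: they come from combining \eqref{15a} (which controls $\grad^2 w$ in $\mu_\sigma$ and $\grad^3 w$ in $\mu_{\sigma+2}$) with the lower-order estimates \eqref{14} and \eqref{15} (which carry the extra $\rho$ weight) and using $\rho\sim\theta(r,z_0)^2$ only in the regime $\sqrt{\rho(z_0)}\gg r$; your claim that these weights emerge directly from $|\L_\sigma w|^2\sim\rho^2|\Delta w|^2$ would give the inequality in the wrong direction when $\rho$ is small. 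You need to supply the intermediate energy step (testing against $\L_\sigma(\eta w)$) and redo the weight bookkeeping by case distinction before the proposal becomes a proof.
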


\begin{proof} Because \eqref{7} is invariant under time shifts, we may set $\tau=0$.
We start recalling that
\[
\L_{\sigma}(\eta w) = \eta\L_{\sigma} w-2\rho\grad \eta\cdot\grad w + (\L_{\sigma} \eta)w,
\]
for any two functions $\eta$ and $w$, and thus, via iteration,
\[
\L_{\sigma}^2(\eta w) = \eta\L^2_{\sigma} w - 2\rho\grad\eta\cdot\grad\L_{\sigma} w + \L_{\sigma} \eta \L_{\sigma} w -2\L_{\sigma}\left(\rho \grad\eta\cdot\grad w\right) + \L_{\sigma}\left((\L_{\sigma} \eta) w\right).
\]
In the sequel, we will choose $\eta$ as a smooth cut-off function that is supported in the intrinsic space-time cylinder $\widehat Q$, and constantly $1$ in the smaller cylinder $ Q$. For such cut-off functions, it holds that
$|\partial_t^k\partial_z^{\beta} \eta|\lesssim r^{-2k-|\beta|}\theta(r,z_0)^{-|\beta|}$. (Here and in the following, the dependency on $\eps,\hat\eps,\delta$ and $\hat \delta$ is neglected in the inequalities.) Then $\eta w$ solves the equation
\begin{eqnarray*}
\lefteqn{\partial_t(\eta w) +\L^2_{\sigma}(\eta w) + n\L_{\sigma} (\eta w) }\\
&=& \eta f + \partial_t \eta w -  4 \L_{\sigma}\left(\rho \grad\eta\cdot\grad w\right) -  4\rho \grad(\rho\grad \eta) : \grad^2 w \\
&&\mbox{}-2\rho z\cdot \grad \eta \laplace w - 2(\sigma+1+n) \rho\grad \eta\cdot \grad w + 2\L_{\sigma}(\rho\grad \eta)\cdot \grad w \\
&&\mbox{}  +\L_{\sigma}((\L_{\sigma} \eta)w)+ \L_{\sigma} \eta \L_{\sigma} w +n (\L_{\sigma} \eta) w.
\end{eqnarray*}
For abbreviation, we denote the right-hand side by $\tilde f$. Testing against $\eta w$ and using the symmetry and nonnegativity properties of $\L_{\sigma}$ and the fact that $\eta w=0$ initially, we obtain the estimate
\[
\iint (\L_{\sigma} (\eta w))^2\, d\mu_{\sigma} dt  \le \iint \eta w\tilde f\, d\mu_{\sigma}dt.
\]
A tedious but straightforward computation then yields
\begin{eqnarray}
\left|\int \eta w\tilde f\, d\mu_{\sigma}\right| &\lesssim & \left( r^2 \|\chi f\|_{\sigma} + \|\L_{\sigma}(\eta w)\|_{\sigma} +\frac1{r^2} \|\chi w\|_{\sigma} + \frac{\theta_0}{r} \|\chi \grad w\|_{\sigma} \right)\nonumber\\
&&\mbox{} \quad \times \left(\frac1{r^2} \|\chi w\|_{\sigma} + \frac{\theta_0}{r} \|\chi \grad w\|_{\sigma} \right),\label{13}
\end{eqnarray}
where $\chi = \chi_{\spt(\eta)} $ and  $\theta_0 = \theta(r,z_0)$, which in turn implies
\begin{eqnarray}
\lefteqn{\iint_Q |\grad w|^2\, d\mu_{\sigma}dt + \iint_Q |\grad^2 w|^2\, d\mu_{\sigma+2}dt}\nonumber\\
&\lesssim & r^4  \iint_{\widehat Q} f^2\, d\mu_{\sigma}dt + \frac1{r^4} \iint_{\widehat Q} w^2 + r^2 \theta_0^2 |\grad w|^2\, d\mu_{\sigma}dt\label{14}
\end{eqnarray}
via \eqref{9} and Young's inequality
 We will show the argument for \eqref{13} for the leading order terms only. For instance, from the symmetry of $\L_{\sigma}$ and the fact that $|\rho\grad \eta|\lesssim \theta_0/r$, we deduce that
\[
\left| \int \eta w \L_{\sigma}(\rho\grad \eta\cdot\grad w)\, d\mu_{\sigma}\right|   \lesssim\frac{\theta_0}{r} \|\L_{\sigma}(\eta w)\|_{\sigma} \|\chi \grad w\|_{\sigma}.
\]
Similarly, by integration by parts we calculate
\[
\left|\int \eta w \rho \grad(\rho\grad \eta):\grad^2 w\, d\mu_{\sigma}\right| \lesssim  \frac{\theta_0^2}{r^2} \int |\grad(\eta w)||\grad w|\, d \mu_{\sigma} + \frac1{r^2}\int \eta|w| |\grad w|\,d\mu_{\sigma},
\]
and conclude observing that $|\grad(\eta w)| \lesssim \theta^{-1}_0r^{-1} |w| + |\grad w|$.
 The remaining terms of $\tilde f$ can be estimated similarly.

To gain control over the third order derivatives of $\eta w$, we test the equation  with $\L_{\sigma}(\eta w)$. With the help of the symmetry and nonnegativity properties of $\L_{\sigma}$, we obtain the estimate
\[
\iint |\grad\L_{\sigma}(\eta w)|^2\, d\mu_{\sigma+1}dt \le \iint \L_{\sigma}(\eta w)\tilde f\, d\mu_{\sigma}dt.
\]
We have to find suitable estimates for the inhomogeneity term. Because we have to make use of the previous bound \eqref{14}, we have to shrink the cylinders $Q$ and $\widehat Q$, such that the new function $\eta$ is supported in the set where the old $\eta$ was constantly one. We claim that
\begin{equation}\label{15}
\iint_Q |\grad \L_{\sigma} w|^2\, d\mu_{\sigma+1}dt
\lesssim r^2 \iint_{\widehat Q}f^2\, d\mu_{\sigma}dt + \frac1{r^6}  \iint_{\widehat Q} w^2 + r^2\theta_0^2 |\grad w| \, d\mu_{\sigma}dt .
\end{equation}
Again, we will on provide the argument for the leading order terms only. We use the symmetry of $\L_{\sigma}$, the bounds on derivatives of  $\eta $ and the scaling of $\rho$ (cf.\ \eqref{8b} and \eqref{8c}) to estimate
\begin{eqnarray*}
\lefteqn{\left| \int \L_{\sigma}(\eta w) \L_{\sigma}(\rho\grad \eta\cdot \grad w)\, d\mu_{\sigma}\right| }\\
&=& \left| \int \grad\L_{\sigma} (\eta w) \cdot \grad (\rho\grad \eta\cdot \grad w)\, d\mu_{\sigma+1} \right|\\
& \lesssim&  \|\grad \L_{\sigma}(\eta w)\|_{\sigma+1}\left(\frac{\theta_0}{r^2} \|\chi \grad w\|_{\sigma} + \frac{1}r\|\chi \grad^2 w\|_{\sigma+2}\right).
\end{eqnarray*}
Similarly, 
\[
\left|\int\L_{\sigma}(\eta w) \rho\grad (\rho\grad\eta): \grad^2 w\, d\mu_{\sigma}\right| \lesssim \frac1{r^2} \|\L_{\sigma}(\eta w)\|_{\sigma} \|\chi \grad^2 w\|_{\sigma+2}.
\]
The estimates of the remaining terms have a similar flavor. We deduce \eqref{15} with the help of Young's inequality and \eqref{14}.

The estimate \eqref{15} is beneficial as it allows to estimate $\tilde f$ in $L^2_{\sigma}$. This time, it is enough to study the term that involves the third-order derivatives of $w$. We rewrite $\L_{\sigma}(\rho\partial_i \eta \partial_i w)  = \rho \partial_i\eta \L_{\sigma}\partial_ i w  - 2\rho \grad(\rho\partial_i \eta)\cdot \grad\partial_i w + \L_{\sigma}(\rho\partial_i \eta) \partial_i w$ and $\L_{\sigma}\partial_i w = \partial_i \L_{\sigma} w  - z_i \laplace w - (\sigma+1)\partial_i w$, and estimate
\begin{eqnarray*}
\| \L(\rho\partial_i \eta\partial_i w)\|_{\sigma}& \lesssim &  \frac1{r\theta_0} \|\chi\L_{\sigma} \partial_i w\|_{\sigma+2} +\frac1{r^2}\|\chi\grad^2 w\|_{\sigma+2} + \frac{\theta_0}{r^3} \|\chi\grad w\|_{\sigma}\\
&\lesssim & \frac1{r} \|\chi\partial_i \L_{\sigma} w\|_{\sigma+1} +\frac1{r^2} \|\chi\grad^2 w\|_{\sigma+2} + \frac{\theta_0}{r^3}\|\chi\grad w\|_{\sigma}.
\end{eqnarray*}
Upon redefining $Q$ and $\hat Q$ as in the derivation of \eqref{15}, an application of \eqref{14} and \eqref{15} then yields
\[
\int_{\eps r^2}^{r^2}\| \L(\rho\partial_i \eta\partial_i w)\|_{\sigma}^2\, dt \lesssim
\iint_{\widehat Q} f^2\, d\mu_{\sigma}dt   + \frac1{r^8}
\iint_{\widehat Q} w^2 +r^2\theta_0^2 |\grad w|^2\, d\mu_{\sigma}dt.
\]
The remaining terms of $\tilde f$ can be estimated in a similar way. Applying the energy estimate from Lemma \ref{Lemma4} to the evolution equation for $\eta w$, we thus deduce
\begin{gather}
\iint_Q (\partial_t w)^2\, d\mu_{\sigma}dt + \iint_Q |\grad^2 w|^2\, d\mu_{\sigma}dt\hspace{9em}\nonumber\\ \mbox{}+ \iint_Q |\grad^3 w|^2\, d\mu_{\sigma+2}dt + \iint_Q |\grad^4 w|^2\,d \mu_{\sigma+4}dt\nonumber\\
 \hspace{9em} \lesssim \iint_{\widehat Q} f^2\, d\mu_{\sigma}dt + \frac1{r^8} \iint_{\widehat Q} w^2 + r^2 \theta_0^2 |\grad w|^2 \, d\mu_{\sigma}dt.\label{15a}
\end{gather} 
Notice that the above bound on the second order derivatives and \eqref{14} together imply that
\[
\frac{\theta_0^4}{r^4} \iint_Q |\grad^2 w|^2\, d\mu_{\sigma}dt \lesssim \iint_{\widehat Q} f^2\, d\mu_{\sigma}dt + \frac1{r^8} \iint_{\widehat Q} w^2 + r^2 \theta_0^2 |\grad w|^2 \, d\mu_{\sigma}dt.
\]
Similarly, we can produce the factor $\theta_0^2/r^2$ in front of the integral containing the third order derivatives. Indeed, because $\grad\L_{\sigma} w = \L_{\sigma+1} \grad w + z\laplace w - \grad^2 w z + (\sigma+1)\grad w$, the bound \eqref{15}  yields
\begin{eqnarray*}
\lefteqn{\iint_Q|\L_{\sigma+1} \grad w|^2 \,d\mu_{\sigma+1}dt}\\
&\lesssim &r^2\iint_{\widehat Q} f^2\, d\mu_{\sigma}dt + \frac1{r^6} \iint_{\widehat Q} w^2 + r^2 \theta_0^2 |\grad w|^2 \, d\mu_{\sigma}dt\\
&&\mbox{} + \iint_{\widehat Q} |\grad^2 w|^2\, d\mu_{\sigma+1}dt + \iint_{\widehat Q} |\grad w|^2\, d\mu_{\sigma+1}dt.
\end{eqnarray*}
The second order term on the right-hand side is controlled with the help of the Cauchy--Schwarz inequality, \eqref{14} and \eqref{15a}. The first-order term is of higher order as a consequence of \eqref{14}. It remains to invoke \eqref{9} to the effect that
\[
\iint_Q |\grad^3 w|^2\, d\mu_{\sigma+3}dt \lesssim r^2\iint_{\widehat Q} f^2\, d\mu_{\sigma}dt + \frac1{r^6} \iint_{\widehat Q} w^2 + r^2 \theta_0^2 |\grad w|^2 \, d\mu_{\sigma}dt.
\]
Combining the latter with \eqref{15a} yields the statement of the Lemma. 
\end{proof}

\subsection{Estimates for the homogeneous equation}\label{SS:hom}

In this subsection, we study the initial value problem for the homogeneous equation
\begin{equation}\label{15b}
\left\{\begin{array}{rcll}\partial_t w + \L_{\sigma}^2 w + n \L_{\sigma} w& = &0\qquad &\mbox{in }(0,\infty)\times B_1(0)\\
w(0,\tacka) &=&g\qquad &\mbox{in }B_1(0)\end{array}\right.
\end{equation}

Our first goal is a pointwise higher order regularity estimate.

\begin{lemma}\label{Lemma6}
Let $0<\eps<1$ and $0<\delta <1$ be given. Let $w$ be a solution to the homogeneous equation \eqref{15b}. If $ \eps,\delta\in(0,1)$ and $\delta$ is sufficiently small, then the following holds for any $z_0\in \overline{B_1(0)}$, $\tau\ge 0$ and $0<r\lesssim 1$:
\begin{eqnarray*}
\lefteqn{|\partial_t^k\partial_z^{\beta} w(t,z)|^2}\\
&\lesssim& \frac{r^{-8k-2|\beta|}\theta(r,z_0)^{-2|\beta|}}{r^4 |B_r^d(z_0)|_{\sigma}} \int_{\tau}^{\tau+r^4} \int_{B_r^d(z_0)} w^2 + r^2\theta(r,z_0)^2|\grad w|^2\, d\mu_{\sigma}dt,
\end{eqnarray*}
for any $(t,z)\in (\tau+ \eps r^4,\tau + r^4]\times B_{\delta r}^d(z_0)$.
\end{lemma}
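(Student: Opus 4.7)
The plan is to bootstrap from Lemma \ref{Lemma5} (applied to $w$ together with its time derivatives) and then convert an $L^2$ estimate for many derivatives into the pointwise bound via a Sobolev embedding adapted to the intrinsic geometry.

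First I would exploit that the linear operator $\L_\sigma^2 + N\L_\sigma$ has coefficients that are independent of $t$. Hence every time derivative $\partial_t^j w$ again solves the homogeneous equation \eqref{15b}. I would therefore iterate Lemma \ref{Lemma5} (with $f=0$) applied successively to $w,\partial_t w,\partial_t^2 w,\ldots$ on a chain of shrinking intrinsic cylinders between $Q^d_{\delta r}(z_0)$ and $Q^d_r(z_0)$. At each step the $L^2$-bound on the right-hand side for $\partial_t^{j+1}w$ is absorbed into an $L^2$-bound on $\partial_t^j w$ and $\grad \partial_t^j w$ on a slightly larger cylinder, and the factor $\rho^{1/2}$ in front of $|\grad w|$ can be tracked via \eqref{8b}--\eqref{8c}. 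After $k$ steps this produces, for any fixed $k$, an $L^2$-estimate
\[
\iint_{Q_{\delta'r}^d(z_0)} \sum_{j\le k}(\partial_t^j w)^2 + \text{(lower-order spatial derivatives)}\,d\mu_\sigma\,dt
\]
bounded by the right-hand side of the claim, on a cylinder of radius $\delta'\in(\delta,1)$.

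Next I would trade time derivatives for high-order spatial derivatives. From the equation, $\L_\sigma^2 w = -\partial_t w - N\L_\sigma w$. Iterating the elliptic regularity of Lemma \ref{Lemma4} (or, near the boundary, of Lemma \ref{Lemma3} after flattening via Lemma \ref{Lemma2} and a partition of unity) on a further shrinking chain of cylinders, I can convert the above $L^2$ control on $\partial_t^j w$ into $L^2$ control of $\partial_t^j \partial_z^\beta w$ for any prescribed multi-index $\beta$, with the correct weights $\rho^{|\beta|/2}$ dictated by \eqref{9}. The cylinder radius only shrinks a controlled amount at each iteration, so the resulting constant depends on $k$ and $|\beta|$ but not on $r$ or $z_0$.

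The final step is a Sobolev embedding in the intrinsic Carnot--Carath\'eodory geometry: once sufficiently many $\partial_t^j\partial_z^\beta w$ are controlled in $L^2(Q^d_{\delta''r}(z_0),d\mu_\sigma)$, rescaling $(t,z)\mapsto(\tau + r^4 s, z_0 + r\theta(r,z_0)y)$ turns $Q^d_{\delta''r}(z_0)$ into a domain of unit size on which the operator has uniformly smooth coefficients (using \eqref{8b}--\eqref{8e} to distinguish the interior and boundary regimes, and Lemma \ref{Lemma2} on the boundary). The classical Sobolev embedding $W^{m,2}\hookrightarrow L^\infty$ for $m$ large then yields the pointwise bound, and undoing the scaling gives exactly the prefactor $r^{-8k-2|\beta|}\theta(r,z_0)^{-2|\beta|}|B_r^d(z_0)|_\sigma^{-1}r^{-4}$ in the claim.

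The main obstacle I anticipate is bookkeeping: one has to apply Lemma \ref{Lemma5}, the elliptic regularity, and the rescaling argument while carefully tracking the $\rho$-weights, the distinct scalings $r$ (tangential) and $r\theta(r,z_0)$ (normal) of the intrinsic balls, and the boundary versus interior regimes distinguished by \eqref{8b}--\eqref{8c}. Doing it at one shot for arbitrary $k,\beta$ is tedious but does not introduce new analytic input beyond what is already established in Lemmas \ref{Lemma3}--\ref{Lemma5}.
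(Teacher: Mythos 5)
Your overall strategy --- obtain $L^2(\mu_\sigma)$ bounds on high-order space--time derivatives on shrinking intrinsic cylinders, then convert to pointwise control --- matches the paper's. The first part differs in route: you bootstrap in time (iterate Lemma~\ref{Lemma5} on $\partial_t^j w$) and then trade time derivatives for spatial ones via elliptic regularity, whereas the paper differentiates the equation directly in space and splits into the interior case $\sqrt{\rho(z_0)}\gg r$ (where Lemma~\ref{Lemma5} with $f=0$ plus iteration suffices because $\rho\sim\rho(z_0)$) and the boundary case $\sqrt{\rho(z_0)}\lesssim r$ (where it flattens via Lemma~\ref{Lemma2}, uses commutation of $\tilde\L_\sigma$ with tangential derivatives and the identity $\tilde\partial_N\tilde\L_\sigma=\tilde\L_{\sigma+1}\tilde\partial_N-\tilde\laplace'$). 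Both routes plausibly land on the same $L^2$ estimate \eqref{16}, so this is a legitimate alternative, though your version requires extra care to track the $\rho$-weights through the elliptic step near the boundary.

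The genuine gap is in your final step. You propose to rescale $(t,z)\mapsto(\tau+r^4 s,\,z_0+r\theta(r,z_0)y)$ to a unit cylinder and invoke the classical embedding $W^{m,2}\hookrightarrow L^\infty$. This does not apply in the boundary regime. When $\sqrt{\rho(z_0)}\lesssim r$ the rescaled domain still touches $\partial B_1(0)$ (indeed by \eqref{8e} $B_r^d(z_0)$ is comparable to a Euclidean half-ball centered at a boundary point), the weight $\rho$ --- or $z_N$ after flattening --- still vanishes there, and the rescaling cannot normalize this degeneracy. Consequently the $L^2$ bounds you have are with respect to $d\mu_\sigma=\rho^\sigma\,dz$ with $\sigma>0$, which is a strictly weaker norm near the boundary than the unweighted $L^2$ norm, and the classical Sobolev embedding does not follow. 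What is actually needed --- and what the paper invokes --- is the weighted Morrey estimate in the intrinsic geometry (\cite[Lemma~4.9]{Seis15b}, also used in the paper's \eqref{47c}), which is additional analytic input beyond Lemmas~\ref{Lemma3}--\ref{Lemma5}, contrary to your closing remark. In short: replace ``classical Sobolev embedding after rescaling'' with the intrinsic Morrey estimate in $L^2(\mu_\sigma)$, and the argument goes through.
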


\begin{proof}The lemma is a consequence of the local higher order regularity estimate
\begin{equation}
\label{16}
\iint_Q (\partial_t^k\partial_z^{\beta} w)^2\, d\mu_{\sigma} dt \lesssim r^{-8k-2|\beta|}\theta(r,z_0)^{-2|\beta|}\iint_{\widehat Q} w^2 +r^2\theta(r,z_0)^2|\grad w|^2\, d\mu_{\sigma}dt,
\end{equation}
where $Q$ and $\widehat Q$ are defined as in Lemma \ref{Lemma5}, and a Morrey estimate in the weighted space $L^2(\mu_{\sigma})$ (see, e.g., \cite[Lemma 4.9]{Seis15b}).   Notice that \eqref{16} is trivial for $(k,|\beta|)\in \{(0,1),(0,2)\}$. In the following, we write $\theta_0 = \theta(r,z_0)$.

To prove \eqref{16} for general choices of $k$ and $\beta$,  it is convenient to consider separately the two cases $\sqrt{\rho(z_0)}\lesssim r$ and $\sqrt{\rho(z_0)}\gg r$. The second case is relatively simple: Since $\rho\sim \rho(z_0)$ by \eqref{8c} in both $ Q$ and $\widehat Q$, we deduce \eqref{16} in the cases $(k,|\beta|)\in \{(1,0),(0,2),(0,3),(0,4)\}$ directly from Lemma \ref{Lemma5} (with $f=0$).  
To gain control on higher order derivatives, we differentiate with respect to $z_i$,
\begin{eqnarray*}
\partial_t \partial_i w + \L^2_{\sigma} \partial_i w +n\L_{\sigma}\partial_i w &=& -z_i \laplace \L_{\sigma} w  - (\sigma+1) \partial_i \L_{\sigma} w - \L_{\sigma} (z_i\laplace w)\\
&&\mbox{} -(\sigma+1) \L_{\sigma}\partial_i w - nz_i \laplace w - n(\sigma+1) \partial_i w.
\end{eqnarray*}
Denoting by $\tilde f$ the right-hand side of this identity and applying Lemma \ref{Lemma5} yields the estimate
\begin{eqnarray*}
\lefteqn{\iint_{Q}  |\grad^4 \partial_i w|^2\, d\mu_{\sigma+4}dt}\\
&\lesssim &\iint_{\widehat Q} \tilde f^2\, d\mu_{\sigma}dt + \frac1{r^8} \iint_{\widehat Q} (\partial_i w)^2 + r^2\theta_0^2 |\grad \partial_ i w |^2\, d\mu_{\sigma}dt.
\end{eqnarray*}
We invoke the previously derived bound and the fact that $\rho\sim \rho(z_0)$ to conclude the statement in the case $(k,|\beta|)=(0,5)$. Higher order derivatives are controlled similarly via iteration.

The proof in the case $\rho(z_0)\lesssim r$ is lengthy and tedious. As similar results have been recently obtained in \cite{John15,Kienzler16,Seis15b} and most the involved tools have been already applied earlier in this paper, we will only outline the argument in the following. Thanks to \eqref{8e}, it is enough to study the situation where $z_0\in \partial B_1(0)$, and upon shrinking $\delta$, we may assume that $\Phi$ constructed in Lemma \ref{Lemma2} is  a diffeomorphism from  $B_{\delta r}^2(z_0)$ onto a subset of the half space. Under $\Phi$, the homogeneous equation \eqref{15b} transforms into
\[
\partial_t \tilde w + \tilde \L_{\sigma}^2\tilde w +n \tilde \L_{\sigma} \tilde w = \tilde f,
\]
where $\tilde f$ is of higher order at the boundary. Because $\tilde \L_{\sigma}$ commutes with tangential derivatives $\tilde \partial_i$ for $i\in \{1,\dots,N-1\}$, control on higher order tangential derivatives are deduced from Lemma \ref{Lemma5}. To obtain control on vertical derivatives, we recall that $\tilde \partial_N \tilde \L_{\sigma} = \tilde \L_{\sigma+1} \tilde \partial_N - \tilde \laplace'$. Arguing as in the proof of Lemma \ref{Lemma3} gives the desired estimates. Again, bounds on higher order derivatives and mixed derivatives are obtained by iteration.
\end{proof}

For the proof of the Gaussian estimates and the Whitney measure estimates for the homogeneous problem, it is convenient to introduce a family of auxiliary functions $\chi_{a,b}: \overline{B_1(0)}\times \overline{B_1(0)} \to \R$, given by
\[
\chi_{a,b}(z,z_0) = \frac{a\hat d(z,z_0)^2}{\sqrt{b^2 +\hat d(z,z_0)^2}},
\]
where $a,b\in \R$ are given parameters, and 
\[
\hat d(z,z_0)^2 = \frac{|z-z_0|^2}{\sqrt{\rho(z)^2 + \rho(z_0)^2 + |z-z_0|^2}}\sim d(z,z_0)^2.
\]
It can be verified by a short computation that $\rho|\grad_z \hat d^2|^2\lesssim \hat d$ and $\rho |\grad^2_z\hat d^2|\lesssim 1$, with the consequence that
\begin{eqnarray}
\sqrt{\rho(z)} |\grad_z\chi_{a,b}(z,z_0)|&\lesssim& |a|,\label{17}
\\
\rho(z) |\grad^2_z\chi_{a,b}(z,z_0)|&\lesssim& \frac{|a|}{|b|},\label{18}
\end{eqnarray}
uniformly in $z,z_0\in \overline{B_1(0)}$. Because $\g$ is conformally flat with $\g\sim \rho^{-1}(dz)^2$, the gradient $\grad_{\g}$ on $(\M,\g)$ obeys the scaling $\grad_{\g}\sim \rho\grad$, and thus \eqref{17} can be rewritten as $\sqrt{\g(\grad_{\g}\chi,\grad_{\g}\chi)}\lesssim |a|$ (where we have dropped the indices and $z_0$). The latter implies that $\chi = \chi_{a,b}(\tacka,z_0)$ is Lipschitz with respect to the intrinsic topology, that is,
\begin{equation}
\label{19}
|\chi_{a,b}(z,z_0)- \chi_{a,b}(z',z_0)|\lesssim |a|d(z,z').
\end{equation}

We derive some new weighted energy estimates. 

\begin{lemma}\label{Lemma7}Let $w$ be the solution to the homogeneous equation \eqref{15b}. Let $a,b\in \R$ and $z_0\in\overline{B_1(0)}$ be given. Define $\chi = \chi_{a,b}(\tacka,z_0)$. Then there exists a constant $C>0$ such that for any $T>0$ it holds
\begin{gather*}
\sup_{[0,T]} \int e^{2\chi} w^2\, d\mu_{\sigma} +\int_0^T \int e^{2\chi}|\grad w|^2+ (\L_{\sigma}(e^{\chi}w))^2\, d\mu_{\sigma}dt\\
\lesssim  e^{C\left(\frac{a^2}{b^2}  + a^4\right)T} \int e^{2\chi}g^2\, d\mu_{\sigma},
\end{gather*}
%
\end{lemma}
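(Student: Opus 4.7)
The plan is to reduce the fourth-order weighted energy estimate to a standard one by absorbing the exponential weight into the solution. Setting $v := e^{\chi}w$ and using that $\chi$ is time-independent, $v$ solves the conjugated equation
\begin{equation*}
\partial_t v + \tilde{\L}_\sigma^2 v + n\tilde{\L}_\sigma v = 0, \qquad v(0,\cdot) = e^{\chi}g,
\end{equation*}
where $\tilde{\L}_\sigma := e^{\chi}\L_\sigma e^{-\chi}$. A short computation from the definition of $\L_\sigma$ yields
\begin{equation*}
\tilde{\L}_\sigma v = \L_\sigma v + 2\rho\grad\chi\cdot\grad v - \bigl(\L_\sigma\chi + \rho|\grad\chi|^2\bigr)v,
\end{equation*}
so the perturbation is first order in $v$ with coefficients built from $\grad\chi$ and $\L_\sigma\chi$.

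Next, I would test this equation against $v\, d\mu_\sigma$. The self-adjointness of $\L_\sigma$ and one integration by parts give
\begin{equation*}
\int v\tilde{\L}_\sigma v\, d\mu_\sigma = \int \rho|\grad v|^2\, d\mu_\sigma - \int \rho|\grad\chi|^2 v^2\, d\mu_\sigma.
\end{equation*}
For the fourth-order term I split $\tilde{\L}_\sigma = S + \tfrac{1}{2}A$ into its $L^2_\sigma$-symmetric part $Sv = \L_\sigma v - \rho|\grad\chi|^2 v$ and its antisymmetric part $Av = 4\rho\grad\chi\cdot\grad v - 2\L_\sigma\chi\,v$, so that $\tilde{\L}_\sigma^* = S - \tfrac12 A$. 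The pointwise identity $(Sv - \tfrac12 Av)(Sv + \tfrac12 Av) = (Sv)^2 - \tfrac14 (Av)^2$ combined with $\int v\tilde{\L}_\sigma^2 v\,d\mu_\sigma = \int(\tilde{\L}_\sigma^* v)(\tilde{\L}_\sigma v)\,d\mu_\sigma$ produces the energy identity
\begin{equation*}
\tfrac{1}{2}\tfrac{d}{dt}\|v\|_\sigma^2 + \int (\L_\sigma v - \rho|\grad\chi|^2 v)^2 d\mu_\sigma + n\int \rho|\grad v|^2 d\mu_\sigma = \tfrac{1}{4}\int (Av)^2 d\mu_\sigma + n\int \rho|\grad\chi|^2 v^2 d\mu_\sigma.
\end{equation*}

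The remaining task is to bound the right-hand side by the left-hand side plus $C(a^2/b^2 + a^4)\|v\|_\sigma^2$, so that Gr\"onwall closes the estimate. The pointwise bounds \eqref{17} and \eqref{18} give $\rho|\grad\chi|^2 \lesssim a^2$ and $\rho|\grad^2\chi| \lesssim |a|/|b|$. Expanding $(Av)^2 \lesssim \rho^2|\grad\chi|^2|\grad v|^2 + (\L_\sigma\chi)^2 v^2$, the gradient contribution is bounded by $Ca^2\int\rho|\grad v|^2\,d\mu_\sigma$ and absorbed on the left for $a$ small (or otherwise redistributed via Young's inequality and the maximal regularity \eqref{9}); similarly $n\int\rho|\grad\chi|^2 v^2\,d\mu_\sigma\lesssim na^2\|v\|_\sigma^2$ feeds directly into the Gr\"onwall constant, and $(\L_\sigma v - \rho|\grad\chi|^2 v)^2 \geq \tfrac{1}{2}(\L_\sigma v)^2 - Ca^4 v^2$ isolates $\tfrac{1}{2}(\L_\sigma v)^2$ at the cost of an $a^4\|v\|_\sigma^2$ contribution. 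The remaining $\int(\L_\sigma\chi)^2 v^2\,d\mu_\sigma$ splits via $\L_\sigma\chi = -\rho\Delta\chi + (\sigma+1)z\cdot\grad\chi$ into a benign piece with $(\rho\Delta\chi)^2\lesssim a^2/b^2$ and a drift piece with $(z\cdot\grad\chi)^2\lesssim a^2/\rho$, the latter producing a singular contribution $a^2\int v^2/\rho\,d\mu_\sigma$. This singular piece must be absorbed into $n\int\rho|\grad v|^2\,d\mu_\sigma$ by a Hardy--Poincar\'e-type inequality (available since $\sigma>0$), or equivalently controlled using the intrinsic Lipschitz property \eqref{19} of $\chi$ established in \cite{Seis15b}. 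Once all remainders are absorbed, Gr\"onwall yields the bound on $\sup_{[0,T]}\|v\|_\sigma^2$, and a time integration of the differential inequality yields the space-time bounds on $\int (\L_\sigma(e^\chi w))^2 d\mu_\sigma dt$ and---via $e^{2\chi}|\grad w|^2 \leq 2|\grad v|^2 + 2|\grad\chi|^2 v^2$ together with $\rho|\grad\chi|^2\lesssim a^2$---on $\int e^{2\chi}|\grad w|^2 d\mu_\sigma dt$. The main technical obstacle is precisely the control of this drift contribution to $\L_\sigma\chi$, which is the analogue of the corresponding step in the second-order version of the estimate proved in \cite{Seis15b}.
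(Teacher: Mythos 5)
Your conjugation-and-decomposition strategy is essentially the same energy estimate the paper carries out, repackaged: the paper tests the twisted evolution equation for $e^{\chi}w$ directly (moving all commutator terms to the right-hand side), whereas you split the conjugated operator $\tilde\L_\sigma=e^{\chi}\L_\sigma e^{-\chi}$ into its $L^2_\sigma$-symmetric and antisymmetric parts and exploit the exact cancellation $\int v\,\tilde\L_\sigma^2 v\,d\mu_\sigma=\int(Sv)^2\,d\mu_\sigma-\tfrac14\int(Av)^2\,d\mu_\sigma$. After expansion the two forms produce the same collection of remainder terms, and the closing tools are identical: the pointwise bounds \eqref{17}--\eqref{18}, the weighted Hardy--Poincar\'e inequality \eqref{21}, the interpolation \eqref{23}, Young, and Gr\"onwall. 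Two points need adjustment. First, the singular drift piece $a^2\int v^2/\rho\,d\mu_\sigma=a^2\|v\|_{\sigma-1}^2$ cannot be absorbed into $n\int\rho|\grad v|^2\,d\mu_\sigma$: that term vanishes when $n=0$, which the paper allows, and even for $n>0$ the prefactor $a^2$ is not small in general. The correct mechanism, and the one the paper uses, is \eqref{21} followed by \eqref{23}, so that $\|v\|_{\sigma-1}^2\lesssim\|v\|_\sigma^2+\|v\|_\sigma\|\L_\sigma v\|_\sigma$ and the excess lands in the admissible $a^4\|v\|_\sigma^2$ together with an $\eps$-multiple of $\|\L_\sigma v\|_\sigma^2$ absorbed on the left. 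The Lipschitz property \eqref{19} of $\chi$ is not the relevant tool here; it only enters later in the proof of Lemma \ref{Lemma8}. Second, to control $\int e^{2\chi}|\grad w|^2\,d\mu_\sigma$ (which is \emph{not} weighted by $\rho$), the bound on $\int\rho|\grad v|^2\,d\mu_\sigma$ is not enough; you also need \eqref{9}, $\|\grad v\|_\sigma\lesssim\|\L_\sigma v\|_\sigma$, as the paper invokes explicitly, and the cross term $\|v\grad\chi\|_\sigma$ is again handled via \eqref{21} and \eqref{23}.
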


\begin{proof}The quantity $e^{\chi}w$ evolves according to
\begin{eqnarray*}
\lefteqn{\partial_t (e^{\chi}w) + \L_{\sigma}^2 (e^{\chi}w) + n\L_{\sigma}(e^{\chi}w)}\\
&= & -2\rho\grad e^{\chi}\cdot \grad\L_{\sigma} w + \L_{\sigma} e^{\chi} \L_{\sigma}w -2\L_{\sigma} (\rho\grad e^{\chi}\cdot \grad w)\\
&&\mbox{} + \L_{\sigma} ((\L_{\sigma} e^{\chi}) w) -2n\rho \grad e^{\chi} \cdot \grad w + n(\L_{\sigma}e^{\chi}) w.
\end{eqnarray*}
Denoting the right-hand side by $\tilde f$ and testing with $e^{\chi}w$ yields
\begin{equation}
\label{22}
\frac{d}{dt}\frac12 \int (e^{\chi} w)^2\, d\mu_{\sigma} + \int (\L_{\sigma}(e^{\chi} w))^2\, d\mu_{\sigma}  + n \int |\grad (e^{\chi} w)|^2\, d\mu_{\sigma+1} = \int e^{\chi} w\tilde f\, d\mu_{\sigma},
\end{equation}
where we have used once more the symmetry of $\L_{\sigma}$.  We claim that the term on the right can be estimated as follows:
\begin{equation}
\label{20}
\int e^{\chi} w\tilde f\, d\mu_{\sigma}\lesssim\eps\left( \|\L_{\sigma}(e^{\chi}w )\|_{\sigma}^2  + \|\grad(e^{\chi}w)\|_{\sigma+1} \right) + \left(1 +  \frac{a^2}{b^2}  + a^4\right)\|e^{\chi} w\|_{\sigma}^2,
\end{equation}
where $\eps$ is some small constant that allows us to absorb the first two terms in the left-hand side of the energy estimate above.
Indeed, a multiple integrations by parts and the bounds \eqref{17} and \eqref{18} yield that the left-hand side of \eqref{22} is bounded by
\begin{gather*}
 |a| \|\L_{\sigma}\zeta\|_{\sigma}  \|\grad\zeta\|_{\sigma+1} + a^2 \|\grad\zeta\|_{\sigma+1}^2 + \left(|a|+\frac{a^2}{|b|}+|a|^3 \right)\|\grad \zeta\|_{\sigma+1} \|\zeta\|_{\sigma} \\
 +a^2 \|\grad\zeta\|_{\sigma+1}\|\zeta\|_{\sigma-1} + \left(\frac{|a|}{|b|} + a^2\right) \|\L_{\sigma}\zeta\|_{\sigma} \|\zeta\|_{\sigma}  + \left(1+ \frac{a^2}{b^2}  + a^4\right) \|\zeta\|_{\sigma}^2\\
 + \left(\frac{a^2}{|b|}+ |a|^3\right) \|\zeta\|_{\sigma-1}\|\zeta\|_{\sigma}  + |a|\|\L_{\sigma}\zeta\|_{\sigma} \|\zeta\|_{\sigma-1} + a^2 \|\zeta\|_{\sigma-1}^2
\end{gather*}
where we have set $\zeta = e^{\chi}w$. We next claim that
\begin{equation}
\label{21}
\|\zeta\|_{\sigma-1}\lesssim \|\zeta\|_{\sigma} + \|\grad\zeta\|_{\sigma+1}.
\end{equation}
Indeed,  recall the Hardy--Poincar\'e inequality
\[
\left\|\zeta - \avint \zeta\, d\mu_{ \tilde \sigma-1}\right\|_{ \tilde \sigma-1} \lesssim \|\grad \zeta\|_{ \sigma +1},
\]
cf.\ \cite[Lemma 3]{Seis14}, which holds true for any $\tilde \sigma\ge \sigma$, because  $\sigma>0$. In particular, $\|\zeta\|_{\sigma-1}\lesssim \|\grad \zeta\|_{\sigma+1}  + \left|\int \zeta\, d\mu_{\sigma-1}\right|$. Notice that for any $\alpha\in (0,\tilde \sigma)$, it holds that
\[
\left|\int \zeta\, d\mu_{\tilde \sigma-1}\right|
 = \left|\int \zeta\rho^{\alpha}\, d\mu_{\tilde \sigma-1-\alpha}\right| 
 \lesssim \left(\int \zeta^2\rho \,d\mu_{\tilde \sigma-1-\alpha}\right)^{1/2} \lesssim \|\zeta\|_{\tilde \sigma-1+\alpha},
\]
by Jensen's inequality because $\mu_{\tilde \sigma-1+\alpha}$ is a finite measure. Applying the previous two estimates iteratively yields \eqref{21}.
Hence, combining \eqref{21} and the interpolation inequality \eqref{23} with the bound on the inhomogeneity and using Young's inequality yields \eqref{20}.

Now \eqref{22} and \eqref{20} imply for $\eps$ sufficiently small that
\[
\frac{d}{dt}\int (e^{\chi} w)^2\, d\mu_{\sigma} + \int (\L_{\sigma}(e^{\chi}w ))^2\, \mu_{\sigma} \lesssim  \left(1 +  \frac{a^2}{b^2}  + a^4\right)\int (e^{\chi} w)^2\, d\mu_{\sigma}.
\]
In view of the bound \eqref{9} we have the estimate $
 \|\grad(e^{\chi}w)\|_{\sigma}\lesssim \|\L_{\sigma}(e^{\chi}w)\|_{\sigma}$. 
Therefore, invoking the product rule of differentiation
\[
\|e^{\chi} \grad w||_{\sigma}\le \| \grad(e^{\chi} w)\|_{\sigma} + \| e^{\chi} w\grad \chi\|_{\sigma}
\lesssim  \|\L_{\sigma}(e^{\chi}w)\|_{\sigma} + |a| \|e^{\chi} w\|_{\sigma-1}.
\]
Observe that \eqref{21} and \eqref{23} imply that
\[
|a| \|e^{\chi} w\|_{\sigma-1} \lesssim \left(|a|+a^2\right) \|e^{\chi} w\|_{\sigma} + \|\L_{\sigma} (e^{\chi} w)\|_{\sigma}.
\]
Combining the previous estimates with a Gronwall argument yields the statement of the lemma.
\end{proof}

The following estimate is a major step towards Gaussian estimates.
\begin{lemma}\label{Lemma8}
Let $w$ be the solution to the homogeneous equation \eqref{15b}. Let $a,b\in \R$ be given. Then there exists a constant $C>0$ such that for all $z, z_0 \in \overline{B_1(0)}$, $0<r\lesssim 1$, $t\in \left(\frac12 r^4, r^4\right)$, $k\in \N_0$ and $\beta\in \N_0^N$ it holds that
\[
|\partial_t^k\partial_z^{\beta} w(t,z)|
 \lesssim  \frac{r^{-4k -|\beta|} \theta(r,z)^{-|\beta|}}{|B_r^d(z)|^{1/2}_{\sigma}} e^{C\left(\frac{a^2}{b^2} + a^4\right)t - \chi_{a,b}(z,z_0)}
\|e^{\chi_{a,b}(\tacka,z_0)}g\|_{\sigma}.
\]

\end{lemma}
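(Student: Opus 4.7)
The plan combines the pointwise regularity estimate of Lemma \ref{Lemma6} with the exponentially weighted energy bound of Lemma \ref{Lemma7}, using the intrinsic Lipschitz character of the weight $\chi_{a,b}$ to connect them.

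First I would apply Lemma \ref{Lemma6} with $z_0 := z$, $\tau := 0$, and the given scale $r$; since $t \in (r^4/2, r^4) \subset (\varepsilon r^4, r^4]$, the hypothesis is met and one obtains
\[
|\partial_t^k \partial_z^\beta w(t,z)|^2 \lesssim \frac{r^{-8k-2|\beta|}\theta(r,z)^{-2|\beta|}}{r^4 |B_r^d(z)|_\sigma} \iint_{(0,r^4)\times B_r^d(z)} \bigl(w^2 + r^2\theta(r,z)^2 |\grad w|^2\bigr) \, d\mu_\sigma\, ds.
\]
This reduces the task to an unweighted $L^2$-estimate of $w$ and $\grad w$ over the intrinsic parabolic cylinder centered at $z$, with the correct derivative scaling already baked into the prefactor.

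Next I would use \eqref{19}: writing $\chi := \chi_{a,b}(\tacka, z_0)$, the Lipschitz property gives $\chi(z') \ge \chi(z) - C|a|\, d(z',z) \ge \chi(z) - C|a| r$ for every $z' \in B_r^d(z)$, hence $e^{-2\chi(z')} \le e^{-2\chi(z)} e^{2C|a| r}$. Multiplying and dividing by $e^{2\chi}$ inside the integral above inserts the weight at the cost of the prefactor $e^{-2\chi(z)} e^{2C|a| r}$. Applying Lemma \ref{Lemma7} with $T = r^4$ then bounds both $\sup_{[0,r^4]} \int e^{2\chi} w^2 \, d\mu_\sigma$ and $\int_0^{r^4}\int e^{2\chi} |\grad w|^2 \, d\mu_\sigma \, ds$ by $e^{C(a^2/b^2 + a^4) r^4} \|e^\chi g\|_\sigma^2$. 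Since $t \in (r^4/2, r^4)$ we have $r^4 \le 2t$, so this factor is dominated by $e^{C'(a^2/b^2 + a^4) t}$ after enlarging the constant. The remaining stray factor $e^{2C|a| r}$ is absorbed into $e^{C a^4 t}$ via $|a| r \le 1 + (|a| r)^4 \le 1 + 2 a^4 t$ for $t \ge r^4/2$, so the linear-in-$|a|$ exponential is dominated by the $a^4 t$ one up to a harmless multiplicative constant. Taking square roots yields the claimed pointwise Gaussian estimate.

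The main obstacle is careful bookkeeping of the $r$ and $\theta$ powers: one must check that the time-averaged $w^2$ contribution supplies the correct prefactor $|B_r^d(z)|_\sigma^{-1}$ and that the time-integrated $r^2\theta^2|\grad w|^2$ contribution does not introduce a stray factor $\theta^2/r^2$ which would spoil the matching with $|B_r^d(z)|_\sigma^{-1/2}$. This is where one genuinely uses $t \sim r^4$ (so the sup-in-time part of Lemma \ref{Lemma7} yields the correct weight on $w^2$) together with a Caccioppoli-type localization of $\grad w$, implicit in the proof of Lemma \ref{Lemma5}. The derivative scaling $r^{-4k-|\beta|}\theta^{-|\beta|}$ arises directly from Lemma \ref{Lemma6} and does not interact with the exponential reweighting step.
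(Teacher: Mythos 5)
Your overall scheme matches the paper's: localize via Lemma \ref{Lemma6}, insert the weight $e^{2\chi}$ by the Lipschitz estimate \eqref{19}, close the loop with the weighted energy bound of Lemma \ref{Lemma7}, and absorb the stray linear-in-$|a|$ exponential using $t\sim r^4$. The first term ($w^2$) and the absorption arguments are handled correctly.

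The gap is exactly the one you flag at the end, and your suggested fix does not close it. When $\sqrt{\rho(z)}\gg r$, so $\theta(r,z)=\sqrt{\rho(z)}\gg r$, the integrand in \eqref{28} carries the factor $r^2\theta^2$, and the only gradient control you invoke from Lemma \ref{Lemma7} is $\int_0^{r^4}\!\int e^{2\chi}|\grad w|^2\,d\mu_\sigma\,dt \lesssim e^{C(\cdot)r^4}\|e^\chi g\|_\sigma^2$. Multiplying by $r^2\theta^2$ and dividing by the $r^4$ in the prefactor leaves a stray $(\theta/r)^2$, which is unbounded and destroys the claimed estimate; ``$t\sim r^4$'' does not remove it, and a Caccioppoli-type bound is the wrong tool, since the losses in Lemma \ref{Lemma5}'s \eqref{14} carry the same $\theta_0^2$ weight and would merely reproduce the problem.

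The paper closes this case differently. It uses that by \eqref{8c} one has $\rho\sim\rho(z)=\theta^2$ throughout $B_r^d(z)$, so the offending $\theta^2$ can be traded for a power of $\rho$, turning the gradient term into an integral against $d\mu_{\sigma+1}$. Concretely, after peeling off $\grad e^\chi$ via $\sqrt{\rho}\,|\grad\chi|\lesssim|a|$ (\eqref{17}), one is left with $\int|\grad(e^\chi w)|^2\,d\mu_{\sigma+1}$, which Lemma \ref{Lemma7} does control through the interpolation \eqref{23}, namely $\|\grad\zeta\|_{\sigma+1}^2 \le \|\zeta\|_\sigma\,\|\L_\sigma\zeta\|_\sigma$, applied to $\zeta=e^\chi w$ together with Young's inequality: $r^2\|\grad(e^\chi w)\|_{\sigma+1}^2 \lesssim r^4\|\L_\sigma(e^\chi w)\|_\sigma^2 + \|e^\chi w\|_\sigma^2$. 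This produces the correct $r^4$ prefactor and a benign extra $r^2a^2$ that is absorbed into $e^{Ca^4 t}$. You should incorporate this case split and the $\mu_{\sigma+1}$--weighted interpolation step; without it, the argument fails whenever the base point is well inside the ball relative to $r$.
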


\begin{proof}
For abbreviation, we write $\chi = \chi_{a,b}(\tacka,z_0)$ and $\theta= \theta(r,z)$. From Lemma \ref{Lemma6} (with $z_0=z$ and $\tau=0$) we deduce the estimate
\begin{eqnarray}
\lefteqn{|\partial_t^k \partial_z^{\beta} w(t,z)|^2}\nonumber\\
&\lesssim& \frac{r^{-8k -2|\beta|} \theta^{-2|\beta|}}{r^4 |B_r^d(z)|_{\sigma}} \sup_{ B_r^d(z)}e^{-2\chi} \int_{0}^{r^4} \int_{B_r^d(z)} e^{2\chi} w^2 + r^2\theta^2 e^{2\chi}|\grad w|^2\, d\mu_{\sigma}dt,\hspace{2em}\label{28}
\end{eqnarray}
for all $t\in \left(\frac12 r^4, r^4\right)$.
We first observe that the Lipschitz estimate \eqref{19} implies that
\[
\sup_{ B_r^d(z)} e^{-\chi} \lesssim e^{ -\chi(z)+ a^4 r^4}.
\]
To estimate the integral expression in \eqref{28}, we distinguish the cases $\sqrt{\rho(z)}\le r$ and $\sqrt{\rho(z)}\ge r$.
In the first case, we we apply Lemma \ref{Lemma7} and obtain
\begin{gather*}
 r^4 \left(\sup_{[0,r^4]} \int e^{2\chi} w^2 \, d\mu_{\sigma} + \int_0^{r^4} \int e^{2\chi} |\grad w|^2\, d\mu_{\sigma}dt\right)\\
\lesssim   r^4 e^{C \left(\frac{a^2}{b^2}  + a^4\right)r^4 } \int e^{2\chi}g^2\, d\mu_{\sigma}
\end{gather*}
for some $C>0$. In the second case, we only focus on second term, i.e., the gradient term. The argument for the first term remains unchanged. Because $\rho\sim \rho(z)$ in the domain of integration (cf.\ \eqref{8c}), it holds that
\begin{eqnarray*}
\lefteqn{\int_0^{r^4} \int_{B_r^d(z)} r^2\theta^2  e^{2\chi}|\grad w|^2\,d\mu_{\sigma}dt}\\
&\lesssim& r^2 a^2 \int_0^{r^4} \int e^{2\chi} w^2\, d\mu_{\sigma} dt + r^2 \int_0^{r^4} \int |\grad(e^{\chi} w)|^2\, d\mu_{\sigma+1}dt,
\end{eqnarray*}
where we have used \eqref{17}. By using \eqref{23} and Young's inequality, we further estimate
\[
r^2 \int |\grad(e^{\chi}w)|^2\, d\mu_{\sigma+1} \lesssim  r^4\int (\L_{\sigma}(e^{\chi}w))^2\, d \mu_{\sigma}  + \int (e^{\chi} w)^2\, d\mu_{\sigma},
\]
which in turn yields
\[
\int_0^{r^4} \int_{B_r^d(z)} r^2\theta^2  e^{2\chi}|\grad w|^2\,d\mu_{\sigma}dt
\lesssim r^4\left(1+r^2a^2\right) e^{C\left(\frac{a^2}{b^2} + a^4\right)r^4}\int e^{2\chi}g^2\, d\mu_{\sigma}
\]
via Lemma \ref{Lemma7}. Notice that we can eliminate the factor $r^2a^2$ in the previous expression upon enlarging the constant $C$. Substituting the previous bounds into \eqref{28} yields the statement of the lemma.
%
%
\end{proof}

For large times, we have exponential decay as established in the lemma that follows.

\begin{lemma}\label{Lemma9}
Let $w$ be the solution of the initial value problem \eqref{7} with $f=0$. Then 
for any $k\in \N_0$, $\beta\in \N_0^N$, $t\ge\tfrac12$ and $z\in \overline{B_1(0)}$ it holds that
\[
\left|\partial_t^k\partial_z^{\beta}\left( w(t,z) - \avint g\, \mu_{\sigma}\right)\right|\lesssim e^{-\lambda_1 t} \|\grad g\|_{\sigma+1}.
\]
\end{lemma}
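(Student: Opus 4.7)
The strategy is to subtract off the constant mode, prove exponential decay in $L^2_\sigma$ by a spectral gap argument, and then upgrade to pointwise bounds on derivatives via the interior regularity estimate of Lemma \ref{Lemma6}.

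\textbf{Reduction to zero average.} Set $M := \avint g\, d\mu_\sigma$ and $\bar w := w - M$, $\bar g := g - M$. Since constants lie in the kernel of $\L_\sigma$ (and hence of $\L_\sigma^2 + n\L_\sigma$), $\bar w$ is itself a weak solution of the homogeneous equation \eqref{15b} with initial datum $\bar g$. Testing the equation with $1$ shows that $\int \bar w(t,\tacka)\, d\mu_\sigma = \int \bar g\, d\mu_\sigma = 0$ for all $t \ge 0$. By the Hardy--Poincar\'e inequality (from \cite{Seis14}) we have $\|\bar g\|_\sigma \lesssim \|\grad g\|_{\sigma+1}$.

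\textbf{Exponential decay in $L^2_\sigma$.} Testing the equation against $\bar w$ and exploiting the symmetry and nonnegativity of $\L_\sigma$ gives the energy identity
\[
\frac{d}{dt}\tfrac12\|\bar w\|_\sigma^2 + \|\L_\sigma \bar w\|_\sigma^2 + n\|\grad \bar w\|_{\sigma+1}^2 = 0.
\]
The operator $\L_\sigma^2 + n\L_\sigma$ is self-adjoint and nonnegative on $L^2_\sigma$ with kernel equal to the constants, so it has a spectral gap: there exists $\lambda_1 > 0$ such that
\[
\|\L_\sigma \zeta\|_\sigma^2 + n\|\grad \zeta\|_{\sigma+1}^2 \ge \lambda_1 \|\zeta\|_\sigma^2
\]
for every $\zeta\in L^2_\sigma$ with $\int \zeta\, d\mu_\sigma=0$. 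Applied to $\bar w$ and combined with Grönwall, this yields $\|\bar w(t)\|_\sigma \le e^{-\lambda_1 t}\|\bar g\|_\sigma$. Integrating the energy identity over $[t-\tfrac14,t]$ additionally gives $\int_{t-1/4}^{t}\|\grad \bar w\|_{\sigma+1}^2\, dt' \lesssim e^{-2\lambda_1 t}\|\grad g\|_{\sigma+1}^2$, and the Hardy--Poincar\'e inequality promotes the weight $\sigma+1$ to $\sigma$ modulo a constant.

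\textbf{Passage to pointwise bounds.} For $t\ge \tfrac12$ and $z \in \overline{B_1(0)}$, apply Lemma \ref{Lemma6} with $z_0 = z$, $r = 4^{-1/4}$ and $\tau = t - r^4 \ge \tfrac14$. Since $|B_r^d(z)|_\sigma$, $\theta(r,z)$ and $r$ are uniformly bounded from below and above,
\[
|\partial_t^k\partial_z^\beta \bar w(t,z)|^2 \lesssim \int_{\tau}^{\tau+r^4}\!\!\int_{B_r^d(z)} \bar w^2 + |\grad \bar w|^2\, d\mu_\sigma\, dt' \lesssim e^{-2\lambda_1 t}\|\grad g\|_{\sigma+1}^2,
\]
where we used the $L^2_\sigma$ decay of $\bar w$ (absorbing the final quarter-time shift into the constant) and the integrated bound on $\grad \bar w$. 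Since $\partial_t^k\partial_z^\beta \bar w = \partial_t^k\partial_z^\beta(w-M)$, this is the claim.

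\textbf{Main obstacle.} The central analytic input is the spectral gap for $\L_\sigma^2 + n\L_\sigma$ on zero-mean functions; this rests on the spectral analysis carried out in \cite{McCannSeis15}, where the diagonalization of this operator identifies a discrete spectrum with explicit first positive eigenvalue. Given the gap, the remainder is a clean interplay between the energy identity, the Hardy--Poincar\'e inequality, and the interior Morrey-type estimate of Lemma \ref{Lemma6}.
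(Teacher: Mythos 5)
Your overall strategy (Lemma \ref{Lemma6} for interior bounds, an energy identity for decay, a spectral gap, and Hardy--Poincar\'e to pass from $g$ to $\grad g$) matches the paper's, but you work with the $L^2_\sigma$ energy $\|\bar w\|_\sigma^2$ obtained by testing the equation with $\bar w$, whereas the paper tests with $\L_\sigma w$ and propagates the Dirichlet form $\|\grad w\|_{\sigma+1}^2$. Either energy functional is adequate in principle, and your route has the mild advantage that the quantity being decayed matches the first term $\bar w^2$ appearing in Lemma \ref{Lemma6} directly.

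There is, however, a genuine error in the passage from $\grad$-control in $L^2_{\sigma+1}$ to $\grad$-control in $L^2_\sigma$. You assert that ``the Hardy--Poincar\'e inequality promotes the weight $\sigma+1$ to $\sigma$ modulo a constant,'' but Hardy--Poincar\'e (\cite[Lemma 3]{Seis14}) estimates a zero-mean \emph{function}, not a gradient; it cannot turn $\|\grad\bar w\|_{\sigma+1}$ into control of $\|\grad\bar w\|_\sigma$. Since $d\mu_{\sigma+1}=\rho\,d\mu_\sigma$ with $\rho\le 1$, the inequality $\|\grad\bar w\|_\sigma\lesssim\|\grad\bar w\|_{\sigma+1}$ is in fact false near $\partial B_1(0)$, and the $\grad\bar w$ integral in Lemma \ref{Lemma6} is genuinely against $d\mu_\sigma$. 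The correct tool is the maximal regularity estimate \eqref{9}, $\|\grad\bar w\|_\sigma\lesssim\|\L_\sigma\bar w\|_\sigma$. Your own energy identity already produces, upon integration over $[t-\tfrac14,t]$, the bound
\[
\int_{t-\frac14}^{t}\|\L_\sigma\bar w\|_\sigma^2\,dt'\le \tfrac12\|\bar w(t-\tfrac14)\|_\sigma^2\lesssim e^{-2\lambda_1 t}\|\grad g\|_{\sigma+1}^2,
\]
and combining with \eqref{9} gives the needed $\int_{t-\frac14}^{t}\|\grad\bar w\|_\sigma^2\,dt'\lesssim e^{-2\lambda_1 t}\|\grad g\|_{\sigma+1}^2$. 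So the gap is easily repaired, but as written the step is wrong.

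A smaller remark: the constant $\lambda_1$ produced by your spectral-gap argument for $\L_\sigma^2+n\L_\sigma$ on zero-mean functions is $\mu_1^2+n\mu_1$, where $\mu_1$ is the smallest nonzero eigenvalue of $\L_\sigma$; the paper's $\lambda_1$ \emph{is} $\mu_1$, since they propagate the Dirichlet energy. Your rate is at least as good for the paper's $n=1$, so the lemma still follows, but it is worth being explicit that the two $\lambda_1$'s are not literally the same number.
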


\begin{proof}The proof is an easy consequence of Lemma \ref{Lemma6} and a spectral gap estimate for $\L_{\sigma}$. Indeed,  applying Lemma \ref{Lemma6} with $t = \tau+\frac14$, $\eps=\frac14$, $r=1$ and $\tau\ge \frac14$ to $w-c$, where $c = \avint w\, d\mu_{\sigma}$ is a constant of the evolution, we obtain the estimate
\[
|\partial_t^k\partial_z^{\beta}\left(w(t,z)-c\right) |\lesssim \int_{t-\frac14}^{t+\frac34} \int (w-c)^2 +|\grad w|^2\, d\mu_{\sigma}dt.
\]
Thanks to the Hardy--Poincar\'e inequality \cite[Lemma 3]{Seis14} and because $\mu_{\sigma+1}\lesssim \mu_{\sigma} $, we can drop the term $(w-c)^2$ in the integrand. To prove the statement of the Lemma, we thus have to establish the estimate
\begin{equation}
\label{25}
\int_{t-\frac14}^{t+\frac34} |\grad w|^2\, d\mu_{\sigma} dt \lesssim e^{-2\lambda_1 t} \int |\grad g|^2\, d\mu_{\sigma+1}.
\end{equation}
For this purpose, we 
test the homogeneous equation with $w$ and invoke the symmetry and nonnegativity properties of $\L_{\sigma}$ and obtain the energy estimate
\[
\frac{d}{dt}\frac12 \int |\grad w|^2\, d\mu_{\sigma+1} +\int( \L_{\sigma} w)^2\, d\mu_{\sigma} \le 0.
\]
On the one hand, integration in time over $ \left[t-\frac14,t+\frac34\right]$ and the a priori estimate \eqref{9} yield
\begin{equation}
\label{24}
\int_{t-\frac14}^{t+\frac34} |\grad w|^2\, d\mu_{\sigma} dt \lesssim \int |\grad w\left( t-\tfrac14\right)|^2\, d\mu_{\sigma+1}.
\end{equation}
On the other hand, the smallest non-zero eigenvalue $\lambda_1$ of $\L_{\sigma}$ yields the spectral gap estimate
\[
\int (\L_{\sigma}w)^2\, d\mu_{\sigma} = \int \grad w\cdot\grad\L_{\sigma} w\, d\mu_{\sigma+1} \ge \lambda_1 \int |\grad w|^2\, d\mu_{\sigma+1},
\]
which we combine with the energy estimate from above to get
\[
\int |\grad w\left(t-\tfrac14\right)|^2\, d\mu_{\sigma+1} \lesssim  e^{-2\lambda_1 t} \int |\grad g|^2\, d\mu_{\sigma+1}.
\]
Plugging this estimate into \eqref{24} yields \eqref{25} as desired.
\end{proof}

We are now in the position to prove the desired maximal regularity estimate for the homogeneous problem. Let us start with the latter.

\begin{prop}\label{Prop1}
Let $w$ be the solution to the homogeneous equation \eqref{15b}. Then
\[
\|w\|_{L^{\infty}} \lesssim \|g\|_{L^{\infty}} 
\]
and
\[
\|w\|_{X(p)} + \|\grad w\|_{L^{\infty}}\lesssim \|\grad g\|_{L^{\infty}}.
\]
\end{prop}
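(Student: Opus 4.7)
The plan is to derive all three bounds from Gaussian-type heat kernel estimates combined with the local higher-order regularity of Lemma 6 and the large-time decay of Lemma 9.

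First, for the $L^\infty$ bound on $w$, I would upgrade the weighted $L^2$ estimate of Lemma 8 (with $k=|\beta|=0$ and $r = t^{1/4}$) into a pointwise Gaussian heat kernel bound of the form
\[
|K(t,z,z_0)| \lesssim \frac{1}{|B_{t^{1/4}}^d(z)|_\sigma}\exp\bigl(-c\, d(z,z_0)^{8/3}/t^{1/3}\bigr),
\]
by optimizing the exponent in $\chi_{a,b}$ (with $b=1$ and $a \sim (d(z,z_0)^2/t)^{1/3}$ balancing $a^4 t$ against $a d^2$) and using the self-adjointness of $\L_{\sigma}^2 + n\L_{\sigma}$ on $L^2_{\sigma}$ via a semigroup/Cauchy--Schwarz step to symmetrize the bound. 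The volume doubling \eqref{8g} then yields $\sup_z \int |K(t,z,z_0)|\, d\mu_\sigma(z_0) \lesssim 1$ uniformly in $t > 0$, and hence $\|w(t)\|_{L^{\infty}}\lesssim \|g\|_{L^{\infty}}$.

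Next, for the $L^\infty$ bound on $\grad w$, I would use that constants solve \eqref{15b}, so $\grad w$ depends only on $g - \bar g$, and that mass conservation $\int K(t,z,z_0)\, d\mu_\sigma(z_0)\equiv 1$ (which holds since $1$ is a solution) gives $\int \grad_z K\, d\mu_\sigma(z_0) = 0$. This enables the centered representation
\[
\grad w(t,z) = \int \grad_z K(t,z,z_0)\, [g(z_0) - g(z)]\, d\mu_\sigma(z_0).
\]
Combining the Lipschitz bound $|g(z_0)-g(z)| \le \|\grad g\|_{L^{\infty}}|z-z_0|$ with the gradient kernel estimate (from Lemma 8 with $|\beta|=1$) and the elementary bound $|z-z_0|\lesssim \theta(d(z,z_0),z)\, d(z,z_0)$ (which follows from unwinding the definition of $d$) reduces matters to $\int |\grad_z K|\, |z-z_0|\, d\mu_\sigma(z_0)\lesssim 1$; this holds by volume doubling together with the Gaussian decay.

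For the $X(p)$ norm, the local part ($0 < r \le 1$) is handled by Lemma 6. Since $\partial_t^k\partial_z^{\beta}$ annihilates constants whenever $(k, |\beta|)\ne (0,0)$, its right-hand side can be rewritten with $w$ replaced by $w - c$ for any constant $c$. Choosing $c$ as the $\mu_\sigma$-average of $w$ over $B_r^d(z_0)$ at each time slice, the intrinsic Poincaré inequality
\[
\|w - c\|_{L^2(B_r^d(z_0);\mu_\sigma)}^2 \lesssim r^2\theta(r,z_0)^2 \|\grad w\|_{L^2(B_r^d(z_0);\mu_\sigma)}^2
\]
together with the $L^{\infty}$ bound on $\grad w$ just obtained yields the pointwise estimate
\[
|\partial_t^k\partial_z^{\beta}w(t,z)|\lesssim r^{-4k-|\beta|+1}\theta(r,z_0)^{-|\beta|+1}\|\grad g\|_{L^{\infty}}.
\]
Using $\rho^{\ell}\lesssim \theta^{2\ell}$ and inserting into each term of the local part of $\|\cdot\|_{X(p)}$, the prescribed weight $r^{4k+|\beta|-1}/\theta^{2\ell-|\beta|+1}$ precisely cancels the scaling for every $(\ell,k,|\beta|)\in \E$, producing a bound $\lesssim \|\grad g\|_{L^{\infty}}$. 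The global part ($T \ge 1$) follows directly from Lemma 9 applied to $w - \bar g$, since for each $(\ell, k, |\beta|)\in\E$ one has $k+|\beta|\ge 1$ and hence each contribution is bounded by $T e^{-\lambda_1 T}\|\grad g\|_{L^{\infty}} \lesssim \|\grad g\|_{L^{\infty}}$.

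The main technical hurdle is the first step: promoting the weighted $L^2$ bound of Lemma 8 into a pointwise Gaussian kernel estimate (and its gradient analog). This is a Davies--Gaffney/Nash--Aronson-type argument adapted to the intrinsic Carnot--Carathéodory geometry, relying on the self-adjointness of $\L_\sigma^2 + n\L_\sigma$ together with the sharp volume-doubling behavior \eqref{8f}--\eqref{8g}. An additional but secondary point in the $X(p)$ argument is the verification of the intrinsic Poincaré inequality, where the interior regime $\sqrt{\rho(z_0)}\ge r$ and the boundary regime $\sqrt{\rho(z_0)}\le r$ of $B_r^d(z_0)$ must be handled separately using \eqref{8b}--\eqref{8c}.
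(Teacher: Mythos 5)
Your plan takes a genuinely different route from the paper: you want to first establish pointwise Gaussian kernel bounds (essentially re-deriving Proposition~\ref{Prop2} ahead of time) and then read off $L^\infty\to L^\infty$ boundedness from $\sup_z\int|K|\,d\mu_\sigma\lesssim 1$, whereas the paper proves Proposition~\ref{Prop1} more directly from Lemma~\ref{Lemma8}: it applies the weighted $L^2$ estimate to $w-g(z_0)$ with the specific parameters $a=-1/r$, $b=r$, and the Whitney/$L^\infty$ bounds then follow in one stroke from the single calculation \eqref{26}. Your architecture is not wrong per se, and the idea to use mass conservation to produce a centered representation $\grad w = \int\grad_z K\,(g(z')-g(z))\,d\mu_\sigma$ is a clean alternative to the paper's use of $w-g(z_0)$.

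However, the derivation of the Gaussian bound as you describe it has a real gap. Taking $b=1$ in $\chi_{a,b}$ makes $\chi(z)\sim a\,\hat d^2$ quadratic in the distance for $\hat d\lesssim 1$, and balancing $a^4 t$ against $a\,d^2$ gives the exponent $-c\,d^{8/3}/t^{1/3}$ you quote. At the characteristic scale $d\sim j\,t^{1/4}$ this reads $-c\,j^{8/3}\,t^{1/3}$, which \emph{vanishes} as $t\downarrow0$; consequently the annulus sum $\sum_j j^\alpha e^{-c\,j^{8/3}t^{1/3}}$ diverges like a power of $t^{-1}$ for small $t$, and the bound $\sup_z\int|K(t,z,\cdot)|\,d\mu_\sigma\lesssim1$ does not follow. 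The correct choice, used in the proof of Proposition~\ref{Prop2}, is $b\sim d(z,z_0)$: then $\chi(z)\sim a\,d$ is \emph{linear} in $d$, the Gronwall cost $a^2/b^2\cdot t$ stays subleading, and optimizing $\ell^4 t-\ell d$ produces the scale-invariant exponent $(d/t^{1/4})^{4/3}$. Without this, your first step fails, and the $\grad w$ estimate (which uses the same kernel bound for $|\beta|=1$) inherits the problem.

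The $X(p)$ part also needs repair. You propose to subtract the time-slice average from $w$ inside Lemma~\ref{Lemma6} and invoke an intrinsic Poincar\'e inequality. But Lemma~\ref{Lemma6} is stated for solutions of the homogeneous equation, so the subtracted constant must be independent of $t$ for $w-c$ to satisfy the same equation; a time-slice average does not qualify, and once $c$ is fixed in time you no longer have a direct Poincar\'e bound relating $\iint(w-c)^2$ to $r^2\theta^2\iint|\grad w|^2$ over the cylinder. The paper sidesteps all of this: the factor $r\theta_0$ in \eqref{26} comes from the localization of the Gaussian weight $e^{\chi_{-1/r,r}(\cdot,z_0)}$ around $z_0$ together with the pointwise Lipschitz bound $|g(z)-g(z_0)|\lesssim|z-z_0|\,\|\grad g\|_{L^\infty}$, and then \eqref{27} already has the exactly matching prefactor $r^{-4k-|\beta|}\theta_0^{-|\beta|}$ for every Calder\'on--Zygmund tuple. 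Raising \eqref{27} to the $p$-th power and averaging over $Q_r^d(z)$ handles the whole $X(p)$ norm without any Poincar\'e input. Your treatment of the large-time part via Lemma~\ref{Lemma9} is fine and matches what the paper does implicitly.
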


\begin{proof}Thanks to the exponential decay estimates from Lemma \ref{Lemma9}, it is enough to focus on the  norms for small times, $T\le1$. We fix $z_0\in \overline{B_1(0)}$ for a moment and let $r\lesssim 1$ and $t\in\left(\tfrac12 r^4,r^4\right)$ be arbitrarily given. As before, we set $\theta_0 = \theta(r,z_0)$. Because $w -g(z_0)$ is a solution to the homogeneous equation with initial value $g-g(z_0)$, an application of Lemma \ref{Lemma8} with $a=-\frac1r$ and $b=r$ yields the estimate
\begin{equation}\label{27}
|\left.\partial_t^k\partial_z^{\beta}\right|_{z=z_0} \left(w(t,z)-g(z_0)\right)|
 \lesssim \frac{r^{-4k - |\beta|}\theta_0^{-|\beta|}}{|B_r^d(z_0)|_{\sigma}^{1/2}} \|e^{\chi_{-\frac1r,r}(\tacka,z_0)} (g-g(z_0)) \|_{\sigma}.
\end{equation}
Notice that the function $\chi$ drops out in the exponential prefactor because $\chi(z_0,z_0)=0$. We claim that
\begin{equation}
\label{26}
\|e^{\chi_{-\frac1r,r}(\tacka,z_0)} (g-g(z_0)) \|_{\sigma}\lesssim \min\left\{  \|g\|_{L^{\infty}},
r\theta_0 \|\grad g\|_{L^{\infty}}\right\}|B_r^d(z_0)|_{\sigma}^{1/2}
\end{equation}
The proof of this estimate has been already displayed earlier, see, e.g., Proof of Proposition 4.2 in \cite{Seis14}. For the convenience of the reader, we recall the simple argument. Notice first that $|g(z) - g(z_0)|\lesssim \min\{\|g\|_{L^{\infty}}, |z-z_0|\|\grad g\|_{L^{\infty}}\}$ .
On every annulus $A_j = B_{jr}^{\hat d}(z_0)\setminus B_{(j-1)r}^{\hat d} (z_0)$ it holds that $\chi_{-\frac1r,r}(z,z_0)\le -\frac{j-1}{\sqrt2} $ as can be verified by an elementary computation, and thus, for $s\in\{0,1\}$, we have
\[
\int_{A_j} e^{2\chi_{-\frac1r,r}(z,z_0)}|z-z_0|^{2s} \, d\mu_{\sigma}(z) \lesssim j^{2s} r^{2s} \theta(jr,z_0)^{2s} e^{-\sqrt2j }|A_j|_{\sigma}
\]
as a consequence of \eqref{8d}. Clearly, $\theta(jr,z_0)\le j\theta_0$. We notice that $A_j = \emptyset$ for each $j\gg\frac1r$. On the other hand, thanks to the volume formula \eqref{8f}, it holds 
\[
|A_j|_{\sigma} \lesssim j^{2(N+\sigma)} |B_r^d(z_0)|_{\sigma}.
\]
It remains to notice that the annuli $\{A_j\}_{j\in\N}$  cover $\overline{B_1(0)}$ and deduce that
\[
\|e^{\chi_{-\frac1r,r}(\tacka,z_0)}|\tacka - z_0|^s \|_{\sigma}\lesssim (r\theta_0)^s |B_r^d(z_0)|_{\sigma}^{1/2} \left(\sum_{j\in \N}  e^{-\sqrt2 j }j^{\kappa}\right)^{1/2} ,
\]
for some $\kappa = \kappa(s)>0$.
Because the series is convergent, we have thus proved the bound in \eqref{26}.

We now combine \eqref{27} and \eqref{26} to the effect of 
\[
r^{4k +|\beta|} \theta_0^{|\beta|}|\left.\partial_t^k\partial_z^{\beta}\right|_{z=z_0} \left(w(t,z)-g(z_0)\right)| \lesssim  \| g\|_{L^{\infty}}
\]
and
\[
r^{4k+|\beta| -1}\theta_0^{|\beta|-1}|\left.\partial_t^k\partial_z^{\beta}\right|_{z=z_0} \left(w(t,z)-g(z_0)\right)| \lesssim  \|\grad g\|_{L^{\infty}}.
\]
We obtain the uniform bounds on $w$ and $\grad w$ in the time interval $[0,1]$ by setting $(k,|\beta|) = (0,0)$ in the first and $(k,|\beta|) = (0,1)$ in the second estimate. (Recall that we use Lemma \ref{Lemma9} to extend the estimates to times $t\ge 1$.)  To control in $X(p)$, we choose $(k,|\beta|)\in \{(1,0),(0,2),(0,3),(0,4)\}$, raise the second of the above estimates to the power $p$ and average over $Q_r(z)$. For instance,  if $(k,|\beta|)  =(0,2)$, this leads to
\[
\frac{r^p}{|Q_r^d(z)|}\iint_{Q_r^d(z)} \theta(r,z_0)^p |\grad^2 w(t,z_0)|^p\, dz_0dt \lesssim \|\grad g\|_{L^{\infty}}^p.
\]
If view of \eqref{8b} and \eqref{8c}, it holds that $\theta(r,z_0)\sim \theta(r,z)$ uniformly in $B_r^d(z)$, and thus, from maximizing in $r$ and $z$ we obtain
\[
\sup_{\substack{ z\in \overline{B_1(0)} \\0<r\lesssim 1}} r \theta(r,z) |Q_r^d(z)|^{-\frac1p} \|\grad^2 w\|_{L^p(Q_r^d(z))}\lesssim \|\grad g\|_{L^{\infty}}.
\]
Higher order derivatives are bounded analogously.
\end{proof}

Gaussian estimates are contained in the following statement.

\begin{prop}\label{Prop2}
There exists a unique function $G:(0,\infty)\times \overline{B_1(0)}\times \overline{B_1(0)}\to \R$ with the following properties:
\begin{enumerate}
\item If $w$ is the solution to the homogeneous equation \eqref{15b}, then for any $k\in \N_0$, $\beta\in \N_0^N$ and $(t,z)\in (0,\infty)\times \overline{B_1(0)}$
\[
\partial_t^k\partial_z^{\beta} w(t,z) = \int \partial_t^k\partial_z^{\beta} G(t,z,z') g(z')\, d\mu_{\sigma}.
\]
\item The function $G$ is symmetric in the last two variables, that is,
\[
G(t,z,z') = G(t,z',z)
\]
for all  $(t,z,z')\in (0,\infty)\times \overline{B_1(0)}\times\overline{B_1(0)}$.
\item For any $z'\in B_1(0)$, $G' = G(\tacka, \tacka, z')$ solves the homogeneous equation
\[
\partial_t G' + \L_{\sigma}^2 G' + \L_{\sigma} G' = 0.
\]
Moreover,
\[
\rho^{\sigma} G' \stackrel{t\downarrow 0}{\longrightarrow } \delta_{z'} \quad\mbox{in the sense of distributions.}
\]
\item It holds that
\[
|\partial_t^k\partial_z^{\beta} G(t,z,z')|\lesssim \frac{ \sqrt[4]{t}^{\, -4k-|\beta|}\theta(\sqrt[4]{t},z)^{-|\beta|}}{|B_{\sqrt[4]{t}}^d(z)|_{\sigma}^{1/2}|B_{\sqrt[4]{t}}^d(z')|_{\sigma}^{1/2}} e^{-C \left(\frac{d(z,z')}{\sqrt[4]{t}}\right)^{4/3}},
\]
for all  $(t,z,z')\in (0,1]\times \overline{B_1(0)}\times\overline{B_1(0)}$ and  any $k\in\N_0$ and $\beta\in \N_0^N$.
\item It holds that
\[
|\partial_t^k \partial_z^{\beta} \left(G(t,z,z') - |B_1(0)|_{\sigma}^{-1}\right)|\lesssim e^{-\lambda_1 t}
\]
for all  $(t,z,z')\in [1,\infty)\times \overline{B_1(0)}\times\overline{B_1(0)}$ and  any $k\in\N_0$ and $\beta\in \N_0^N$.
\end{enumerate}
\end{prop}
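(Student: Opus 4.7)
By linearity, Lemma \ref{Lemma1} identifies the solution map $S(t)\colon g\mapsto w(t,\cdot)$ as a bounded linear operator on $L^2_\sigma$. Specializing Lemma \ref{Lemma8} to $a=0$, $b=1$ (so that $\chi_{a,b}\equiv 0$) and $r=\sqrt[4]{t}\wedge 1$ shows that, for each $(t,z)$ and each multi-index $(k,\beta)$, the functional $g\mapsto \partial_t^k\partial_z^{\beta}(S(t)g)(z)$ is bounded on $L^2_\sigma$. The Riesz representation theorem then furnishes a unique element $K_{k,\beta}(t,z,\cdot)\in L^2_\sigma$ representing this functional; the smoothness of the solution delivered by Lemma \ref{Lemma6} forces the compatibility $K_{k,\beta}=\partial_t^k\partial_z^{\beta} K_{0,0}$, so one sets $G:=K_{0,0}$ and reads off (1). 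Property (2) is a consequence of the self-adjointness of $\L_\sigma^{2}+n\L_\sigma$ on $L^2_\sigma$, which transfers to $S(t)$ and hence to its integral kernel. Property (3) follows by differentiating the representation under the integral sign (justified by the Gaussian estimates derived below) and by reading off the initial value from $S(0)=\mathrm{id}$.

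\textbf{Gaussian bound (4).} I would exploit the semigroup together with symmetry to write
\[
G(t,z,z') \;=\; \int G(t/2,z,y)\, G(t/2,y,z')\, d\mu_\sigma(y),
\]
insert the weight factor $e^{-\chi(y)}e^{\chi(y)}$ with $\chi=\chi_{a,b}(\cdot,z)$, and apply the Cauchy--Schwarz inequality in $y$:
\[
|G(t,z,z')|\;\le\;\|e^{-\chi}G(t/2,z,\cdot)\|_\sigma\;\|e^{\chi}G(t/2,\cdot,z')\|_\sigma.
\]
Each factor is the operator norm of an $L^2_\sigma$-functional; applying Lemma \ref{Lemma8} with parameters $(a,b)$ to the first and with $(-a,b)$ to the second (so that the exponential weights cancel inside the norm once the weight is pushed through $S(t/2)$) yields
\[
|G(t,z,z')|\;\lesssim\; \frac{\exp\bigl(C(a^2/b^2+a^4)t+\chi_{a,b}(z',z)\bigr)}{|B^d_{\sqrt[4]{t}}(z)|_\sigma^{1/2}\,|B^d_{\sqrt[4]{t}}(z')|_\sigma^{1/2}}.
\]
The Gaussian-type decay is extracted by optimizing the exponent over $a<0$ and $b>0$: writing $d=d(z,z')$, in the regime $d\gg\sqrt[4]{t}$ one chooses $b\sim d$ so that $\chi_{a,b}(z',z)\sim ad$, and then $|a|\sim(d/t)^{1/3}$ balances $a^4 t$ against $|a|d$ and produces the exponent $-C(d/\sqrt[4]{t})^{4/3}$. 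In the complementary regime $d\lesssim \sqrt[4]{t}$ the Gaussian factor is $\lesssim 1$ and the on-diagonal bound follows directly from the special case $(a,b)=(0,1)$. Spatial and temporal derivatives of $G$ are treated identically by differentiating the first factor in the semigroup identity and invoking the derivative version of Lemma \ref{Lemma8}.

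\textbf{Long-time bound (5).} Integrating \eqref{15b} against $d\mu_\sigma$ and using $\L_\sigma 1=0$ together with the self-adjointness of $\L_\sigma$ shows that $\int w\, d\mu_\sigma$ is conserved along the evolution. Testing (1) with $g\equiv 1$ then forces $\int G(t,z,z')\, d\mu_\sigma(z)=1$, and by the symmetry (2) also $\int G(t,z,z')\, d\mu_\sigma(z')=1$. For $t\ge 1$ and fixed $z'$, I set $\tilde w(s,z):=G(1+s,z,z')-|B_1(0)|_\sigma^{-1}$; since constants are stationary solutions of \eqref{15b}, $\tilde w$ itself solves \eqref{15b}, has zero $\mu_\sigma$-average, and admits the initial datum $G(1,\cdot,z')-|B_1(0)|_\sigma^{-1}$, which by Lemma \ref{Lemma6} is smooth and uniformly Lipschitz in $z'$. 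Lemma \ref{Lemma9} then supplies $|\partial_t^k\partial_z^{\beta}\tilde w(s,z)|\lesssim e^{-\lambda_1 s}$ for $s\ge 1/2$, which rephrases as (5).

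\textbf{Main obstacle.} The decisive step is the optimization in (4). One must verify that the optimal choice $|a|\sim(d/t)^{1/3}$, $b\sim d$ stays in the admissible range of Lemma \ref{Lemma7} on the relevant time scale $\sqrt[4]{t}\lesssim d\lesssim 1$, and that the asymptotic identification $\chi_{a,b}(z',z)\sim ad$ actually applies there; once this regime-splitting is in place, the remaining computation reduces to the elementary balance sketched above.
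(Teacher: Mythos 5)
Your proof is correct and follows essentially the same route as the paper's. The only presentational difference is in step (4): where you derive the off-diagonal bound by applying Cauchy--Schwarz to the Chapman--Kolmogorov identity $G(t,z,z')=\int G(t/2,z,y)G(t/2,y,z')\,d\mu_\sigma(y)$ and then invoking Lemma~\ref{Lemma8} once with $(a,b)$ and once with $(-a,b)$, the paper packages the very same two applications of Lemma~\ref{Lemma8} into a duality argument with an auxiliary operator $\A\colon L^2_\sigma\to L^\infty$ and its adjoint on $L^1_\sigma$; the optimization $b\sim d$, $|a|\sim(d/t)^{1/3}$ in the regime $d\gg\sqrt[4]{t}$ and the fallback $a=0$ for $d\lesssim\sqrt[4]{t}$ are identical. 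Your treatment of (5) via the conservation law $\int G\,d\mu_\sigma=1$ and Lemma~\ref{Lemma9} is the natural way to fill in what the paper leaves as ``immediate''; one should add the trivial remark that for $t\in[1,3/2]$ the short-time Gaussian bound (4) already gives uniform control of $G$ and its derivatives, and note that ``uniformly Lipschitz in $z'$'' should read ``in $z$''.
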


The estimates in the fourth statement are usually referred to as ``Gaussian estimates''.

\begin{remark}\label{Remark1}
In the fourth statement we may freely interchange the balls centered at $z$ by balls centered at $z'$ and vice versa. Likewise, we can substitute $\theta(\sqrt[4]{t},z)$ by  $\theta(\sqrt[4]{t},z')$. This is a consequence of \eqref{8g}.
\end{remark}

The proof of this Proposition is (almost) exactly the one of \cite[Proposition 4.3]{Seis14}. We display the argument for completeness and the convenience of the reader.

\begin{proof}We first notice that the linear mapping $L_{\sigma}^2 \ni g\mapsto \partial_t^k\partial_z^{\beta} w(t,z)\in \R$ is bounded for every fixed $(t,z)\in (0,\infty)\times \overline{B_1(0)}$ and $(k,\beta) \in \N_0\times \N_0^N$. Indeed, for small times, boundedness is a consequence of Lemma \ref{Lemma8} (with $a=0$), and for large times, boundedness follows from successively applying Lemma \ref{Lemma9} and Lemma \ref{Lemma8} (with $a=0$), namely  $|\partial_t^k \partial_z^{\beta} w(t,z)|\lesssim \|w\left(\frac12\right)\|_{\sigma} + \|\grad w\left(\frac12\right)\|_{\sigma+1}  \lesssim \|g\|_{\sigma}$. Riesz' representation theorem thus provides us with the existence of a unique function $G_{k,\beta}(t,z,\tacka)\in L^2_{\sigma}$ such that
\[
\partial_t^k \partial_z^{\beta} w(t,z) = \int G_{k,\beta} (t,z,z') g(z')\, d\mu_{\sigma}(z').
\]
Setting $G = G_{0,0}$, uniqueness implies that $G_{k,\beta} = \partial_t^k\partial_z^{\beta} G$. Notice that $G$ inherits the symmetry in $z$ and $z'$ from the symmetry of the linear operator $\L^2_{\sigma}+\L_{\sigma}$ via the symmetry of the associated semi-group operator $e^{(\L_{\sigma}^2 + \L_{\sigma})t}$. 

We now turn to the proof of the Gaussian estimates. We shall write $\chi = \chi_{a,b}(\tacka,z_0)$ for some fixed $z_0\in\overline{B_1(0)}$ and set $\theta = \theta(r,z)$. We first notice that by Lemma \ref{Lemma8}, for $r\sim \sqrt[4]{t}$, we have
\[
|B_r^d(z)|^{1/2}_{\sigma} e^{\chi(z)} |w\left(\frac{t}2,z\right)| \lesssim e^{C\left(\frac{a^2}{b^2}  + a^4\right) t} \|e^{\chi} g\|_{\sigma},
\]
and thus, the mapping $\A$ defined by 
\[
(\A h)(z) = |B_r^d(z)|_{\sigma}^{1/2} e^{\chi(z)} \int G\left(\frac{t}2,z,z'\right) e^{-\chi(z')} h(z')\, d\mu_{\sigma}(z') ,
\]
for $z\in \overline{B_1(0)}$, is a bounded linear mapping from $L_{\sigma}^2$ to $L^{\infty}$ with
\[
\|\A\|_{L^2_{\sigma}\to L^{\infty}} \lesssim e^{C\left(\frac{a^2}{b^2}  + a^4\right)t}.
\]
By the symmetry of the Green's function, it holds that
\begin{eqnarray*}
\int \A h\xi\, d\mu_{\sigma} &=& \iint |B_r^d(z)|^{1/2}_{\sigma} e^{\chi(z)} G\left(\frac{t}2,z',z\right)  e^{-\chi(z')}h(z') \, d\mu_{\sigma}(z')d\mu_{\sigma}(z)\\
&=& \int e^{-\chi }w_{\xi}\left(\frac{t}2\right) h\, d\mu_{\sigma},
\end{eqnarray*}
if $w_{\xi}$ denotes the solution to the homogeneous equation with initial value $g_{\xi} = |B_r^d(\tacka)|_{\sigma}^{1/2} e^{\chi} \xi$, and if  $\xi\in L^1_{\sigma}$ is such that $g_{\xi}\in L^2_{\sigma}$. In particular, the action of the dual $\A^*: (L^{\infty})^*\to L^2_{\sigma}$ on such functions $\xi$ is given by $\A^* \xi  = e^{-\chi} w_{\xi}\left(\frac{t}2\right)$. Because $\|A\|_{L_{\sigma}^2\to L^{\infty}} = \|\A^*\|_{(L^{\infty})^* \to L_{\sigma}^2}$, we then have the estimate
\[
\|e^{-\chi} w_{\xi}\left(\frac{t}2\right)\|_{\sigma} \lesssim e^{C\left(\frac{a^2}{b^2}  + a^4\right)t} \|\xi\|_{L^1_{\sigma}}.
\]
An application of Lemma \ref{Lemma8} with $a$ replaced by $-a$ then yields that
\begin{eqnarray*}
\lefteqn{\left|\int \partial_t^k\partial_z^{\beta}G(t,z,\tacka) |B_r^d(\tacka)|_{\sigma}^{1/2} e^{\chi} \xi\, d\mu_{\sigma}\right|}\\
& \lesssim &\frac{r^{-4k-|\beta|}\theta^{-|\beta|}}{|B_r^d(z)|_{\sigma}^{1/2}} e^{C\left(\frac{a^2}{b^2} + a^4\right)t + \chi(z)} \|\xi\|_{L^1_{\sigma}}.
\end{eqnarray*}
By approximation, it is clear that this estimate holds for any $\xi\in L^1_{\sigma}$. Thanks to the duality $L^{\infty} = (L^1_{\sigma})^*$, we thus have
\[
\left|\partial_t^k\partial_z^{\beta}G(t,z,z')\right| \lesssim \frac{r^{-4k-|\beta|}\theta^{-|\beta|}}{|B_r^d(z)|_{\sigma}^{1/2} |B_r^d(z')|_{\sigma}^{1/2}} e^{C\left(\frac{a^2}{b^2}  + a^4\right)t + \chi(z) - \chi(z')}
\]
The term $-\chi(z')$ drops out of the exponent upon choosing $z'=z_0$. To conclude the argument for the Gaussian estimates, we distinguish two cases: First, if $\sqrt[4]{t} \ge d(z,z_0)$, then
\[
1\lesssim e^{-C\left(\frac{d(z,z_0)}{\sqrt[4]{t}}\right)^{4/3}},
\]
and thus the statement follows with $a=0$. Otherwise, if $\sqrt[4]{t} \le d(z,z_0)$, we choose $a = -\ell$ for some $\ell>0$ and $b\sim d = d(z,z_0)$ so that the exponent becomes
\[
\left(\frac{\ell^2}{d^2} +\ell^4\right)t  - \ell d
\]
modulo constant prefactors. We optimize the last two terms in $\ell$ by choosing $\ell\sim (d/t)^{1/3}$. It is easily checked that the exponent is bounded by an expression of the form $1  - (d/\sqrt[4]{t})^{4/3}$, which yields the desired result.

The remaining properties are immediate consequences of the preceding analysis.
\end{proof}

\subsection{Calder\'on--Zygmund estimates}\label{SS:CZ}

We will see at the beginning of the next subsection that the kernel representation of solutions of the homogeneous problem caries over to the ones of the inhomogeneous problem. This observation is commonly referred to as Duhamel's principle. To study regularity in the inhomogeneous problem the detailed knowledge of the Gaussian kernel provided by Proposition \ref{Prop2} is very helpful. A major step in the analysis of Whitney measures is the translation of the energy estimates from weighted $L^2$ to standard $L^p$ spaces. We are thus led to the study of singular integrals in the spirit of Calder\'on and Zygmund and the theory of Muckenhoupt weights.

Out of the Euclidean setting, a good framework for these studies is provided by spaces of homogeneous type, see Coifman and Weiss \cite{CoifmanWeiss71}, which are metric measure spaces, i.e., metric spaces endowed with a doubling Borel measure.\footnote{In fact, Coifman and Weiss introduced the notion of spaces of homogeneous type with quasi metrics instead of metrics.} The theory of singular integrals in spaces of homogeneous type was elaborated by Koch \cite{Koch99,Koch04,Koch08}. For the Euclidean theory, we refer to Stein's monographs \cite{Stein70,Stein93}.

Let us recall some pieces of the abstract theory. Let $(X,D)$ be a metric space endowed with a doubling Borel measure $\mu$.
A linear operator $T$ on $L^q(X,\mu)$ with $q\in(1,\infty)$ is called a \emph{Calder\'on--Zygmund operator} if $T$ can be written as
\[
T f(x) = \int_X K(x,y) f(y)\, d\mu(y)
\]
for all $x\in (\spt f)^c$ and $f\in L^{\infty}(X,\mu) \cap L^q(X,\mu)$, where $K : X\times X\to \R$ is a measurable kernel such that
\begin{align*}
y &\mapsto K(x,y) \in L^1_{\loc}(X\setminus\{x\},\mu),\\
x &\mapsto K(x,y) \in L^1_{\loc}(X\setminus\{y\},\mu),
\end{align*}
and satisfying the following boundedness and \emph{Calder\'on--Zygmund cancellation conditions}:
\begin{equation}
\label{29}
\sup_{x\not =y}V(x,y)|K(x,y)| \lesssim 1
\end{equation}
 and
\begin{equation}
\label{30}
\sup_{x\not = y}\sup_{x'\not = y'}V(x,y)\wedge V(x',y')|K(x,y) - K( x',y')| \lesssim  \left(\frac{D(x,x') + D(y, y')}{D(x,y) + D(x',y')}\right)^{\delta}
\end{equation}
for some $\delta \in (0,1]$.
Here we have used the notation
\[
V(x,y) = \mu\left(B^D_{D(x,y)}\left((x+y)/2\right)\right).
\]
It is worth noting  that the doubling property of $\mu$ implies that we could  equivalently have chosen to center the above balls at $x$ or $y$.

Finally, we call $\omega$ a \emph{$p$-Muckenhoupt weight} if 
\[
\sup_B \left( \frac1{\mu(B)}\int_B \omega\, d\mu\right)\left(\frac1{\mu(B)}\int_B \omega^{-\frac1{p-1}}\, d\mu\right)^{p-1} <\infty.
\]
The class of $p$-Muckenhoupt weights is denoted by $A_p(X,D,\mu)$. 

The theory of singular integrals asserts that any Calder\'on--Zygmund operator $T$ extends to a bounded operator on any $L^p(X,\mu)$ with $p\in(1,\infty)$, i.e.,
\[
\|T f\|_{L^p(\mu)} \lesssim \|f\|_{L^p(\mu)}.
\]
Moreover, if $\omega \in A_p$ is a Muckenhoupt weight, then $T$ is also bounded on $L^p(\mu\mres\omega)$, where $d(\mu\mres\omega) = \omega d\mu$.

In order to establish $L^p$ maximal regularity estimates for our problem at hand, we have to study singular integrals of the form
\[
T_{\ell,k,\beta} f(t,z) = \int_0^{\infty} \int K_{\ell,k,\beta}((t,z),(t',z')) f(t',z')\, d\mu_{\sigma}(z')dt',
\]
where $K_{\ell,k,\beta}((t,z),(t',z')) = \chi_{(0,t)}(t')\rho(z)^{\ell}\partial_t^k\partial_z^{\beta} G(t-t',z,z')$. In fact, we will see that $T_{\ell,k,\beta}$ is a Calder\'on--Zygmund operator on the product space $(0,\infty)\times B_1(0)$ provided that $\ell$, $k$, and $\beta$ are such that
\begin{equation}\label{34}
(\ell,k,|\beta|) \in \E=\left\{ (0,1,0), (0,0,2), (1,0,3),(2,0,4)\right\}.
\end{equation}
We will accordingly refer to any tuple $(\ell,k,\beta)$ in the above class as a \emph{Calder\'on--Zygmund exponent}. 

The product space $X = (0,\infty)\times B_1(0)$ will be endowed with the metric 
\[
D((t,z),(t',z')) = \sqrt[4]{|t-t'| + d(z,z')^4},
\]
which reflects the parabolic scaling of the linear differential operator, and the product  measure $\mu = \lambda^1\otimes \mu_{\sigma}$, with $\lambda^1$ denoting the one-dimensional Lebesgue.
Because $d$ is doubling, so is $D$, and thus the  metric measure space $(X,
D,\mu)$  is  \emph{of homogeneous type} in the sense of Coifman and Weiss \cite{CoifmanWeiss71} and is thus suitable for Calder\'on--Zygmund theory. Notice also that the volume tensor $V((t,z),(t',z'))$ simplifies to
\begin{equation}
\label{35}
V((t,z),(t',z')) \sim D((t,z),(t',z'))^4 |B_{D((t,z),(t',z'))}^d((z+z')/2)|_{\sigma}.
\end{equation}

Without proof, we state the following lemma:
\begin{lemma}\label{Lemma12}
If $(\ell,k,\beta)$ is such that  \eqref{34} holds, then $T_{\ell,k,\beta}$ is a Calder\'on--Zygmund operator.
\end{lemma}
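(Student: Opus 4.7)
The plan is to verify the three defining properties of a Calder\'on--Zygmund operator on the space of homogeneous type $(X,D,\mu)$ with $\mu = \lambda^1\otimes\mu_\sigma$. The $L^2$ boundedness of $T_{\ell,k,\beta}$ is an immediate consequence of the maximal regularity estimate of Lemma \ref{Lemma4} combined with Duhamel's principle, since the exponents $(\ell,k,|\beta|)\in\E$ are precisely those for which $\|\rho^\ell\partial_t^k\partial_z^\beta w\|_{L^2(L^2_\sigma)}\lesssim \|f\|_{L^2(L^2_\sigma)}$ holds. Measurability and the local integrability of $K_{\ell,k,\beta}$ in each variable off the diagonal follow from the continuity and Gaussian decay of $G$ stated in Proposition \ref{Prop2}. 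It therefore remains to establish the pointwise size bound \eqref{29} and the Hölder-type cancellation bound \eqref{30}.

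For the size estimate, set $\tau = t - t'$. In the regime $\tau\in(0,1]$, pick $r = \sqrt[4]{\tau}$ and invoke Proposition \ref{Prop2}(4) to obtain
\[
|K_{\ell,k,\beta}((t,z),(t',z'))| \lesssim \frac{r^{-4k-|\beta|}\rho(z)^\ell\theta(r,z)^{-|\beta|}}{|B_r^d(z)|_\sigma^{1/2}|B_r^d(z')|_\sigma^{1/2}}\, e^{-C(d(z,z')/r)^{4/3}}.
\]
By \eqref{35} one has $V\sim D^4|B_D^d(z)|_\sigma$ with $D\sim r\vee d(z,z')$. When $d(z,z')\lesssim r$ the exponential is harmless and there remains to check $\rho(z)^\ell\theta(r,z)^{-|\beta|}\lesssim r^{4-4k-|\beta|}$; splitting into the cases $\sqrt{\rho(z)}\lesssim r$ and $\sqrt{\rho(z)}\gg r$ and using $\rho\lesssim\theta(r,z)^2$, this reduces to the pair of algebraic conditions $\ell+2\le 2k+|\beta|$ and $\ell\ge 2k+|\beta|-2$, which both hold with equality on $\E$. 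When $d(z,z')\gg r$, the doubling comparison \eqref{8g} generates only polynomial growth in $d/r$, which the Gaussian factor absorbs. For $\tau\ge 1$, part (5) of Proposition \ref{Prop2} supplies exponential decay and the bound is trivial.

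For the cancellation condition, telescope $K(x,y) - K(x',y')$ into three increments in which only one variable changes at a time. Each such increment will be bounded via a mean value inequality along an intrinsic curve joining the two endpoints: since the intrinsic gradient corresponds to $\sqrt{\rho}\,\grad$ in space and $\partial_t$ in time, differentiating the Gaussian kernel once more in the appropriate direction inserts an extra factor of $1/D$ in Proposition \ref{Prop2}(4) applied with $(k,\beta)$ replaced by $(k+1,\beta)$ or $(k,\beta+e_i)$. Integration of this bound along the connecting curve, combined with the pure size bound in the regime where the perturbation exceeds the separation between the points, yields \eqref{30} for some $\delta\in(0,1]$. I expect the main obstacle to be the careful bookkeeping of the weights $\rho$ and $\theta$ near the free boundary: the intrinsic scaling $\theta(r,\cdot)$ changes character between the interior and boundary regimes \eqref{8b}--\eqref{8c}, and the curve between $z$ and its perturbation may cross this transition. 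This is handled, in the spirit of \cite{Koch99,Seis15b}, by covering the curve by a chain of intrinsic balls on which $\rho$ and $\theta$ are comparable and summing the increments using the doubling property of $|\cdot|_\sigma$.
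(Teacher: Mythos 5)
The paper does not actually prove Lemma~\ref{Lemma12}; it states it without proof and refers to Lemmas~4.20 and 4.21 of \cite{Seis15b}, noting the argument is ``almost identical to the one in the porous medium setting.'' There is therefore no in-text proof to compare against. That said, your outline is the standard one and is precisely the strategy used in the cited reference: (i) $L^2(L^2_{\sigma})$ boundedness from Duhamel's principle and the energy estimate of Lemma~\ref{Lemma4}, (ii) the size bound \eqref{29} by choosing $r=\sqrt[4]{t-t'}$ in the Gaussian estimates of Proposition~\ref{Prop2}(4) together with the volume formula \eqref{35}, and (iii) the cancellation condition \eqref{30} by differentiating the kernel once more along an intrinsic path and integrating. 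Your algebraic reduction of the size bound to the equalities $\ell=2k+|\beta|-2$ in both the boundary regime $(\sqrt{\rho(z)}\lesssim r$, $\theta\sim r$, $\rho\lesssim r^2)$ and the interior regime $(\sqrt{\rho(z)}\gg r$, $\theta=\sqrt{\rho}$, $\rho\gtrsim r^2)$ is correct and is exactly the reason the list $\E$ is what it is.

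The one place you are candid about not having closed the argument is the cancellation bound, and that is indeed where the real work sits: the increments in $z$, $z'$, and $t$ must be treated by a chaining argument through overlapping intrinsic balls because $\theta(r,\tacka)$ is only slowly varying along the curve, not constant, and one must decide, for each increment, whether to use the derivative estimate (when $D(x,x')$ or $D(y,y')$ is small compared to $D(x,y)$) or simply two applications of the size bound (when it is not). You name these issues but do not resolve them; as a proof this is a sketch. Given that the published paper itself defers precisely these details to \cite{Seis15b}, your proposal is an accurate and faithful account of what the omitted proof would contain, and there is no error in the parts you do carry out.
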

%

The proof is almost identical to the one in the porous medium setting, see Lemmas 4.20 and 4.21 in \cite{Seis15b}. We will thus refrain from repeating the argument and refer the interested reader to the quoted paper.

\subsection{The inhomogeneous problem}\label{SS:inhom}

In this subsection, we consider the inhomogeneous problem with zero initial datum,
\begin{equation}\label{32}
\left\{\begin{array}{rcll}\partial_t w + \L_{\sigma}^2 w + n \L_{\sigma} w& = &f\qquad &\mbox{in }(0,\infty)\times B_1(0)\\
w(0,\tacka) &=&0\qquad &\mbox{in }B_1(0)\end{array}\right.
\end{equation}

Our first observation is that the kernel representation from Proposition \ref{Prop2} carries over to the inhomogeneous setting.

\begin{lemma}[Duhamel's principle]\label{Lemma10}
If $f\in L^2(L^2_{\sigma})$ and $w$ is the solution to the initial value problem \eqref{32}, then
\[
w(t,z) = \int_0^t \int G(t-t',z,z')f(t',z')\, d\mu_{\sigma}(z')dt'
\]
for all $(t,z)\in (0,\infty)\times \overline{B_1(0)}$.
\end{lemma}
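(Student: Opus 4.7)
The strategy is to define $\tilde w$ by the right-hand side of the claimed identity, verify that $\tilde w$ is a weak solution of the initial-value problem \eqref{32}, and invoke the uniqueness assertion of Lemma \ref{Lemma1}. Writing $T_s$ for the semigroup operator $T_s h(z) = \int G(s,z,z') h(z')\, d\mu_\sigma(z')$, Proposition \ref{Prop2} says that for every $h \in L^2_\sigma$ the map $s \mapsto T_s h$ is the weak solution of the homogeneous problem \eqref{15b} with initial datum $h$. Thus the candidate is
\[
\tilde w(t,z) = \int_0^t [T_{t-t'} f(t',\cdot)](z)\, dt'.
\]

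First I would verify that $\tilde w$ lies in the class where uniqueness applies, i.e., $\tilde w \in L^2((0,T); L^2_\sigma)$ with $\L_\sigma \tilde w \in L^2((0,T); L^2_\sigma)$. By Proposition \ref{Prop2}(4)--(5) the operator norm of $T_s$ on $L^2_\sigma$ is uniformly bounded, so Minkowski's inequality yields $\|\tilde w(t,\cdot)\|_\sigma \lesssim \int_0^t \|f(t',\cdot)\|_\sigma\, dt' \lesssim \sqrt{t}\,\|f\|_{L^2(L^2_\sigma)}$. For the control of $\L_\sigma \tilde w$, the energy estimate from Lemma \ref{Lemma1} (applied to the homogeneous problem with initial datum $f(t',\cdot)$, or equivalently obtained directly from accretivity of $\L_\sigma^2 + n\L_\sigma$) gives $\int_0^\infty \|\L_\sigma T_s f(t',\cdot)\|_\sigma^2\, ds \lesssim \|f(t',\cdot)\|_\sigma^2$, and a further Minkowski-type argument produces the desired $L^2(L^2_\sigma)$ bound on $\L_\sigma \tilde w$.

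Next I would check the weak formulation \eqref{8a} with $g=0$ for $\tilde w$. Fix a smooth test function $\zeta$ supported in $[0,T)\times \overline{B_1(0)}$, substitute the expression for $\tilde w$ into the left-hand side of \eqref{8a}, and apply Fubini to exchange the order of integration. For each fixed $t' \in (0,T)$ the resulting inner integral is precisely the weak formulation of the homogeneous equation satisfied by $v^{(t')}(s,z) := [T_{s-t'} f(t',\cdot)](z)$ on $(t',T)$ tested against the time-shifted restriction of $\zeta$; by Proposition \ref{Prop2}(3) this inner expression equals $\int \zeta(t',z) f(t',z)\, d\mu_\sigma(z)$. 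Integrating in $t'$ reproduces the right-hand side of \eqref{8a}, and the vanishing initial condition $\tilde w(0,\cdot) = 0$ is immediate from the lower limit of integration.

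The main obstacle is the rigorous justification of Fubini and of the identification of the inner integral as a weak-formulation identity for $v^{(t')}$, both of which require absolute integrability and a careful treatment of the time-shifted test function. These issues can be handled either by directly invoking the pointwise Gaussian bounds of Proposition \ref{Prop2} together with Cauchy--Schwarz, or more expediently by approximating $f$ by $f_n \in C_c^\infty((0,T)\times B_1(0))$, establishing Duhamel's formula first for the smooth approximants (where all manipulations are classical), and then passing to the limit using the continuity of both sides in $f$ guaranteed by the linear estimates of Lemma \ref{Lemma1}. Once the identity is established, uniqueness from Lemma \ref{Lemma1} forces $w = \tilde w$, completing the proof.
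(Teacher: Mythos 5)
Your argument is correct and is exactly what the paper's one-line proof gestures at: represent the candidate $\tilde w$ as an integral of solutions of the homogeneous problem via the Green's function, verify the weak formulation (8a) using the fundamental-solution property from Proposition \ref{Prop2}, and conclude $\tilde w = w$ by the uniqueness in Lemma \ref{Lemma1}. The only minor quibble is that the identification of $s\mapsto T_s h$ as \emph{the} weak solution of the homogeneous problem is formally Proposition \ref{Prop2}(1) (combined with Lemma \ref{Lemma1}) rather than Proposition \ref{Prop2}(3), but the substance is the same.
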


\begin{proof}
The statement follows from the fact that $G$ is a fundamental solution, see statement 3 of Proposition \ref{Prop2}.
\end{proof}

\begin{prop}\label{Prop3}
Let $w$ be the solution to the initial value problem \eqref{32}. Then, for any $p\in(1,\infty)$ it holds
\begin{equation}
\label{33}
\|\partial_t w\|_{L^p} + \|\grad w\|_{L^p}  + \|\grad^2 w\|_{L^p} + \|\rho\grad^3 w\|_{L^p} + \|\rho^2\grad^4 w\|_{L^p}\lesssim \|f\|_{L^p}.
\end{equation}
\end{prop}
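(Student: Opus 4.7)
My strategy is to combine Duhamel's representation (Lemma \ref{Lemma10}) with the Calder\'on--Zygmund character of the derivative kernels (Lemma \ref{Lemma12}), and to recover the first-order term $\grad w$ by an auxiliary (non-singular) argument.

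\textbf{Step 1: Reduction to singular integrals.} Differentiating the Duhamel formula under the integral sign identifies, for each Calder\'on--Zygmund exponent $(\ell,k,|\beta|)\in\E=\{(0,1,0),(0,0,2),(1,0,3),(2,0,4)\}$,
\[
\rho(z)^\ell\partial_t^k\partial_z^\beta w(t,z)=T_{\ell,k,\beta}f(t,z).
\]
This accounts for four of the five quantities in \eqref{33}. Lemma \ref{Lemma12} says that each $T_{\ell,k,\beta}$ is a Calder\'on--Zygmund operator on the space of homogeneous type $(X,D,\mu)$ with $\mu=\lambda^1\otimes\mu_\sigma$, hence the general theorem recalled in Subsection \ref{SS:CZ} yields
\[
\|T_{\ell,k,\beta}f\|_{L^p(\mu)}\lesssim\|f\|_{L^p(\mu)}\qquad\text{for every }p\in(1,\infty).
\]

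\textbf{Step 2: Change of reference measure.} To pass from $L^p(\mu)$ to the unweighted $L^p((0,\infty)\times B_1(0))$ appearing in \eqref{33}, I would invoke the Muckenhoupt extension: any Calder\'on--Zygmund operator on $(X,D,\mu)$ is bounded on $L^p(\mu\mres\omega)$ whenever $\omega\in A_p(X,D,\mu)$. Applied to $\omega=\rho^{-\sigma}$ this gives exactly the inequality \eqref{33} for the four Calder\'on--Zygmund exponents, provided that $\rho^{-\sigma}\in A_p(X,D,\mu)$. The verification of this last statement is, in my view, the main technical point (see below).

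\textbf{Step 3: The first-order term $\grad w$.} The tuple $(0,0,1)$ is absent from $\E$, and so $\|\grad w\|_{L^p}$ is not directly supplied by Steps 1--2. Here the kernel $\grad_zG$ is actually strictly less singular than the Calder\'on--Zygmund exponents: by statement (4) of Proposition \ref{Prop2} its prefactor carries an extra factor $\sqrt[4]{t}^{\,-1}\theta(\sqrt[4]{t},z)^{-1}$ compared with the borderline scaling of $\partial_tG$ or $\grad^2G$, and therefore Young-type convolution estimates are sufficient to deliver the desired $L^p$ bound. Alternatively, once $\|\grad^2w\|_{L^p}$ and a bound on $\|w\|_{L^p}$ (obtained by integrating the Gaussian kernel and using the Hardy--Poincar\'e inequality from \cite[Lemma 3]{Seis14} as in the proof of Lemma \ref{Lemma7}) are in hand, $\|\grad w\|_{L^p}$ follows by interpolation in the spirit of \eqref{23}, extended from $L^2$ to $L^p$.

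\textbf{Main obstacle.} The only non-routine point is the Muckenhoupt verification that $\rho^{-\sigma}\in A_p(X,D,\mu)$. The geometry is tailored to this purpose: the dichotomy \eqref{8b}--\eqref{8c} asserts that on any intrinsic cylinder $Q_r^d(z_0)$ the weight $\rho$ is essentially constant (either $\rho\sim r^2$ in the ``boundary'' regime $\sqrt{\rho(z_0)}\lesssim r$, or $\rho\sim\rho(z_0)$ in the ``interior'' regime), and the intrinsic volume formula \eqref{8f} is explicit; combining these with the parabolic product structure $D((t,z),(t',z'))^4=|t-t'|+d(z,z')^4$ allows one to evaluate both averages in the $A_p$ test directly, with the contributions $\rho^{-\sigma}$ and $\rho^{\sigma/(p-1)}$ balancing up to a universal constant. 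The same philosophy underlies the analogous step in the porous medium setting of \cite[Section 4]{Seis15b}, which would serve as my blueprint.
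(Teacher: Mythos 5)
Your Steps 1 and 2 coincide essentially verbatim with the paper's argument: identify $\rho^\ell\partial_t^k\partial_z^\beta w$ with $T_{\ell,k,\beta}f$ for the four exponents in $\E$ via Duhamel (Lemma \ref{Lemma10}), invoke Lemma \ref{Lemma12} to conclude boundedness on $L^p(\mu)$, and then pass to unweighted $L^p$ by the Muckenhoupt criterion with $\omega=\rho^{-\sigma}$. The paper records the admissible window $-(\sigma+1)<\gamma<(p-1)(\sigma+1)$ for weights $\rho^\gamma$ and notes $\gamma=-\sigma$ lies in it since $\sigma>0$; it does not carry out the $A_p$ verification either, so your ``main obstacle'' assessment is accurate and matches the paper's level of detail.

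Step 3 is where you and the paper part ways. The paper handles $\|\grad w\|_{L^p}$ by rewriting the equation as a \emph{second-order} inhomogeneous problem, $\partial_t w + n\L_\sigma w = f-\L_\sigma^2 w$, observing that the new right-hand side is controlled in $L^p$ by the four already-established bounds (since $\L_\sigma^2 w$ is a linear combination of $\grad^2 w$, $\rho\grad^3 w$, $\rho^2\grad^4 w$ with bounded coefficients), and then applying the porous-medium maximal-regularity estimate of \cite[Prop.\ 4.23]{Seis15b}. Your Alternative (a) --- a Young-type estimate on the subcritical kernel $\grad_z G$, using the exponential large-time decay from Proposition \ref{Prop2}(5) for the tail --- is a plausible direct route and is in spirit what the cited porous-medium result does under the hood, but you would need to actually verify the two $L^1$ kernel bounds; as stated it is an announcement rather than a proof.

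Your Alternative (b) contains a genuine gap: $\|w\|_{L^p((0,\infty)\times B_1(0))}$ is \emph{not} controlled by $\|f\|_{L^p}$. The operator $\L_\sigma^2+n\L_\sigma$ has a zero mode (constants), and Duhamel gives $\avint w(t)\,d\mu_\sigma = \int_0^t \avint f(s)\,d\mu_\sigma\,ds$; for $f$ with $\int_0^\infty \avint f\,d\mu_\sigma\,ds\neq 0$ this tends to a nonzero constant, so $\|w(t,\cdot)\|_{L^p_z}$ does not decay and the space-time $L^p$ norm over $(0,\infty)$ is infinite, while $\|f\|_{L^p}$ is finite. Any interpolation of the form $\|\grad w\|_{L^p}\lesssim\|w\|_{L^p}^{1/2}\|\grad^2 w\|_{L^p}^{1/2}$ therefore cannot close. (The Hardy--Poincar\'e inequality bounds $w-\avint w$, not $w$, and its use here would be circular since that is precisely a bound in terms of $\grad w$.) Either adopt the paper's reduction to the second-order problem, or flesh out Alternative (a).
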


\begin{proof}The purpose of this lemma is to carry the energy estimates from Lemma \ref{Lemma4} over to the standard $L^p$ setting. This is achieved by applying the abstract theory recalled in the previous subsection. In fact, as a consequence of Lemma \ref{Lemma10}, any function  $\rho^\ell \partial_t^k\partial_z^{\beta} w$ has the kernel representation
\[
T_{\ell,k,\beta} f(t,z)  = \int_0^{\infty} \int K_{\ell,k,\beta}((t,z),(t',z')) f(t',z')\, d\mu_{\sigma}(z')dt',
\]
where 
\[
K_{\ell,k,\beta}((t,z),(t',z'))  = \chi_{(0,t)}(t')\rho(z)^{\ell}\partial_t^k\partial_z^{\beta}G(t-t',z,z').
\]
If $(\ell,k,\beta)$ are Calder\'on--Zygmund exponents \eqref{34}, by Lemma \ref{Lemma12}, the energy estimates from Lemma \ref{Lemma4} carry over to the any $L^p(L^p(\mu_{\sigma}))$ space with $p\in(1,\infty)$. Moreover, if $\nu$ is a Muckenhoupt weight in $A_p(B_1,d,\mu_{\sigma})$, then the operators $T_{\ell,k,\beta}$ are bounded on $L^p(L^p(\mu_{\sigma}\mres \nu))$. Notice that this is the case for weights of the form $\nu = \rho^{\gamma}$ precisely if $-(\sigma+1) <\gamma < (p-1)(\sigma+1)$. In particular, choosing $\gamma = -\sigma$, we see that  $T_{\ell,k,\beta}$ is bounded on $L^p = L^p(L^p)$ for any $p\in(1,\infty)$   because $\sigma>0$. This is the  statement of the proposition apart from the term $\|\grad w\|_{L^p}$. The control of this term can be deduced, for instance, from the analogous estimates for the porous medium equation, see Proposition 4.23 in \cite{Seis15b}, applied to $\partial_t w + n\L_{\sigma} w = f - \L_{\sigma}^2w$. This concludes the proof.
 \end{proof}

In the following, we consider the larger cylinders 
\[
\widehat Q_r^d(z_0) : = \left(\frac{r^4}4,r^4\right)\times B_{2r}^d(z_0)\quad\mbox{and}\quad \widehat Q(T)  = \left(\frac{T}4,T\right)\times \overline{B_1(0)}.
\]

\begin{lemma}\label{Lemma11}
\begin{enumerate}
\item   Suppose that $\spt f\subset \widehat Q_r^d(z_0)$ for some $z_0\in \overline{B_1(0)}$ and $0<r\lesssim 1$. Then for any $(\ell,k,\beta)$ satisfying \eqref{34} and any $p\in(1,\infty)$, it  holds that
\[
r^4 |Q_r^d(z_0)|^{-\frac1p} \|\rho^{\ell}\partial_t^k\partial_z^{\beta}  w\|_{L^p(Q_r^d(z_0))} \lesssim \|f\|_{Y(p)}.
\]
\item Suppose that $\spt f\subset \widehat Q(T)$ for some $T\ge 1$.  Then it holds for any $p\in(1,\infty)$ that
\[
\sum_{(\ell,k,|\beta|)\in\E} T\|\rho^{\ell}\partial_t^k\partial_z^{\beta}  w\|_{L^p(Q(T))} \lesssim \|f\|_{Y(p)}.
\]
\end{enumerate}
\end{lemma}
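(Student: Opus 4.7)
The plan is to invoke Duhamel's formula (Lemma \ref{Lemma10}) together with the global $L^p$ maximal regularity from Proposition \ref{Prop3}, which asserts $\|\rho^{\ell}\partial_t^k\partial_z^{\beta} w\|_{L^p((0,\infty)\times B_1(0))} \lesssim \|f\|_{L^p}$ for every Calder\'on--Zygmund exponent $(\ell,k,|\beta|)\in\E$ and every $p\in(1,\infty)$. What remains is to translate the support assumption on $f$ into the appropriate Whitney scaling of the $Y(p)$ seminorm.

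For statement (1), I would cover $\widehat Q_r^d(z_0)$ by boundedly many cubes $Q_{r_j}^d(z_j)$ with $r_j \sim r$ and $z_j \in B_{Cr}^d(z_0)$: splitting the time interval $(r^4/4, r^4)$ into $(r^4/2, r^4) \cup (r^4/4, r^4/2)$ takes care of the time direction, while the doubling of $\mu_{\sigma}$ handles $B_{2r}^d(z_0)$. By \eqref{8b}, \eqref{8c}, and \eqref{8f} one has $\theta(r_j, z_j) \sim \theta(r, z_0) =: \theta_0$ and $|Q_{r_j}^d(z_j)|_{\sigma} \sim |Q_r^d(z_0)|_{\sigma}$, and the $Y(p)$ definition delivers
\[
\|f\|_{L^p(\widehat Q_r^d(z_0))} \lesssim \frac{\theta_0}{r^3}\, |Q_r^d(z_0)|^{1/p}\,\|f\|_{Y(p)}.
\]
Combining with Proposition \ref{Prop3}, multiplying through by $r^4 |Q_r^d(z_0)|^{-1/p}$, and using $r\theta_0 \lesssim 1$ (valid since $r\lesssim 1$ and $\theta_0 \le 1$) closes the estimate.

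For statement (2), I would split $f = f_1 + f_2$, where $f_1$ denotes the restriction of $f$ to $(T-1,T)\times B_1(0)$ and $f_2$ its restriction to $(T/4, T-1)\times B_1(0)$. For $f_1$, Proposition \ref{Prop3} paired with the $Y(p)$ bound $\|f_1\|_{L^p} \le \|f\|_{L^p(Q(T-1))} \lesssim \|f\|_{Y(p)}/T$ yields the target after multiplying by $T$. For $f_2$, I would exploit that $t\in Q(T)=(T,T+1)$ and $t'\in\spt f_2$ force $t-t'\ge 1$, so the large-time estimate of Proposition \ref{Prop2}(5) applies. Since every exponent in $\E$ involves at least one derivative, the constant $|B_1(0)|_{\sigma}^{-1}$ is annihilated, and since $\rho^\ell\le 1$ one obtains $|\rho(z)^\ell \partial_t^k\partial_z^\beta G(t-t', z, z')| \lesssim e^{-\lambda_1(t-t')}$ uniformly in $(z,z')$. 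Plugging this into the Duhamel representation, using $\|f(t',\cdot)\|_{L^1(B_1(0))} \lesssim \|f(t',\cdot)\|_{L^p(B_1(0))}$, and summing the geometric-type series $\sum_j e^{-\lambda_1(T-T_j)}/T_j \lesssim 1/T$ over unit time slabs $Q(T_j)\subset (T/4, T-1)\times B_1(0)$ gives the pointwise bound
\[
\|\rho^{\ell}\partial_t^k\partial_z^{\beta} w_2\|_{L^\infty(Q(T))} \lesssim \|f\|_{Y(p)}/T,
\]
which, since $Q(T)$ has unit volume, upgrades to the desired $L^p$ estimate.

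The main obstacle is the far-time contribution in (2): a direct use of Proposition \ref{Prop3} after covering $\widehat Q(T)$ by unit time slabs picks up a factor $T^{1/p}$ that grows with $T$, and only the exponential kernel decay of Proposition \ref{Prop2}(5)—available precisely because each Calder\'on--Zygmund exponent carries a derivative—recovers the sharp $1/T$ scaling required by the statement.
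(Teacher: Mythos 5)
Your argument for part (1) is exactly the paper's: apply Proposition \ref{Prop3}, cover $\widehat Q_r^d(z_0)$ by boundedly many balls $Q_{r_i}^d(z_i)$ with $r_i\sim r$, invoke the definition of $\|f\|_{Y(p)}$ on each, and absorb the extra $r\theta_0\lesssim 1$. Nothing to add there.

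For part (2) you depart from the paper, and you are right to. The paper's proof says ``the argument for the second one is very similar,'' meaning: apply Proposition \ref{Prop3} with $f$ supported in $\widehat Q(T)=(T/4,T)\times\overline{B_1(0)}$, cover $\widehat Q(T)$ by unit time slabs $Q(T_j)$, and use the $Y(p)$ seminorm on each. But $\widehat Q(T)$ requires about $3T/4$ slabs, each contributing $\|f\|_{L^p(Q(T_j))}\le\|f\|_{Y(p)}/T_j$ with $T_j\gtrsim T$, so that $\|f\|_{L^p(\widehat Q(T))}^p\lesssim T\cdot T^{-p}\|f\|_{Y(p)}^p$, i.e.\ $\|f\|_{L^p(\widehat Q(T))}\lesssim T^{1/p-1}\|f\|_{Y(p)}$; after multiplying by $T$ this leaves a growing factor $T^{1/p}$, not a uniform bound. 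You have identified this correctly. The issue is really in the choice of $\widehat Q(T)$: if the paper had taken a fixed-width neighbourhood of $Q(T)$ (e.g.\ $(T-1,T+1)\times\overline{B_1(0)}$) and pushed the rest to Lemma~\ref{Lemma14}(2), the covering argument would have been genuinely parallel to part~(1), with a bounded number of slabs.

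Your fix is the right one and uses exactly the mechanism the paper deploys in Lemma~\ref{Lemma14}(2): split $f$ into the near-time piece on $(T-1,T)$, which Proposition~\ref{Prop3} handles with a single application of the $Y(p)$ bound, and the far-time piece on $(T/4,T-1)$, for which $t-t'\ge 1$ so the large-time kernel bound of Proposition~\ref{Prop2}(5) applies. Since every tuple in $\E$ has $k+|\beta|\ge1$, the constant $|B_1(0)|_\sigma^{-1}$ is differentiated away, and $\rho^\ell\lesssim1$ gives a uniform kernel bound of order $e^{-\lambda_1(t-t')}$; the sum $\sum_j e^{-\lambda_1(T-T_j)}/T_j\lesssim T^{-1}$ (using $T_j\ge T/4$) then produces the required $1/T$, and the pointwise bound on a unit-measure cube upgrades to $L^p$. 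This is essentially the Lemma~\ref{Lemma14}(2) computation transplanted to the time window $(T/4,T-1)$, so the two approaches are the same in substance. The difference is that you make the step explicit where the paper leaves it to a ``very similar'' that, taken literally against the stated definition of $\widehat Q(T)$, does not close.
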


\begin{proof}
We will only prove the first statement. The argument for the second one is very similar. The desired estimate is an immediate consequence of Proposition \ref{Prop3}. Indeed, the latter implies that
\[
\|\rho^{\ell}\partial_t^k\partial_z^{\beta}  w\|_{L^p(Q_r^d(z_0))} \lesssim \| f\|_{L^p(\widehat Q_r^d(z_0))}.
\]
If now $\left\{Q_{r_i}^d(z_i)\right\}_{i\in I}$ is a finite cover of $\widehat Q_r^d(z_0)$ with radii $r_i\sim r$ and such that $\sum_{i} |Q_{r_i}^d(z_i)|\lesssim |\widehat Q_r^d(z_0)|$, then
\[
\| f\|_{L^p(\widehat Q_r^d(z_0))} \le \sum_{i\in  I} \| f\|_{L^p(Q_{r_i}^d(z_i))} \lesssim \frac1{r^4} |\widehat Q_r^d(z_0)|^{\frac1p} \| f\|_{Y(p)}.
\]
Notice that $|\widehat Q_r^d(z_0)| \lesssim |Q_r^d(z_0)|$, because $\mu_{\sigma}\otimes \lambda^1$ is doubling, which concludes the proof.
\end{proof}

In view of the definition of the $X(p)$ norm, the estimates on the second and third order spatial derivative derived in the  previous lemma are not strong enough for balls $B_r^d(z_0)$ that are relatively far away from the boundary in the sense that $\sqrt{\rho(z_0)}\gg r$. Estimates in such ball as well as uniform bounds on $w$ and $\grad w$ are derived in the lemma that follows.

\begin{lemma}\label{Lemma13}
\begin{enumerate}
\item Suppose that $\spt f\subset \widehat Q_r^d(z_0)$ for some $z_0\in \overline {B_1(0)}$ and $0<r\lesssim1$ and let $p>N+4$.  
Then it holds for any $0<t\lesssim r^4$ that
\[
|w(t,z_0)| + |\grad w(t,z_0)| \lesssim \|f\|_{Y(p)}.
\]
If moreover $\sqrt{\rho(z_0)} \gg r$, then it holds
\[
\begin{aligned}
\MoveEqLeft r \theta(r,z_0) |Q_r^d(z_0)|^{-\frac1p} \|\grad^2 w\|_{L^p(Q_r^d(z_0))}\\
& + r^2 \theta(r,z_0)^2 |Q_r^d(z_0)|^{-\frac1p} \|\grad^3 w\|_{L^p(Q_r^d(z_0))} \lesssim \|f\|_{Y(p)}.
\end{aligned}
\]
\item Suppose that $\spt f\subset \widehat Q(T)$ for some $T\ge 1$. Then it holds for any $p>1+N/2$ that
\[
\|w\|_{L^{\infty}(Q(T))}  + \|\grad w\|_{L^{\infty}(Q(T))}\lesssim \|f\|_{Y(p)}.
\]
\end{enumerate}
\end{lemma}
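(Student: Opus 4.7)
All three estimates follow from Duhamel's representation (Lemma~\ref{Lemma10}) combined with the Gaussian and spectral-gap bounds on the kernel $G$ from Proposition~\ref{Prop2}, via H\"older's inequality. The pointwise bounds are obtained by integrating the kernel against $f$, while the improved interior bounds exploit the fact that the operator degenerates only near the boundary.

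\emph{Pointwise bounds in part (1).} Writing
\[
w(t,z_0) \;=\; \int_{r^4/4}^{r^4 \wedge t}\!\int_{B_{2r}^d(z_0)} G(t-s,z_0,z')\,f(s,z')\,d\mu_{\sigma}(z')\,ds
\]
and applying H\"older's inequality in space-time,
\[
|w(t,z_0)| \;\le\; \|G(t-\cdot,z_0,\cdot)\|_{L^{p'}(\widehat Q_r^d(z_0))}\;\|f\|_{L^p(\widehat Q_r^d(z_0))},
\]
with the analogous bound for $|\nabla w(t,z_0)|$ via $\nabla_z G$. A covering of $\widehat Q_r^d(z_0)$ by finitely many intrinsic cylinders $Q_{r'}^d(z_i)$ together with the definition of $\|f\|_{Y(p)}$ yields $\|f\|_{L^p(\widehat Q_r^d(z_0))} \lesssim \theta(r,z_0)\,r^{-3}\,|Q_r^d(z_0)|^{1/p}\,\|f\|_{Y(p)}$. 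For the kernel, substituting $\tau = t-s$ and using Proposition~\ref{Prop2}(4) and the volume formula~\eqref{8f}, the Gaussian exponential localizes the $z'$-integration to an intrinsic ball of radius $\sqrt[4]{\tau}$ and reduces the task to bounding
\[
\int_0^{r^4} |B_{\sqrt[4]{\tau}}^d(z_0)|_{\sigma}^{1-p'}\,d\tau
\qquad\text{(resp.\ with an extra weight $\sqrt[4]{\tau}^{-p'}\theta(\sqrt[4]{\tau},z_0)^{-p'}$ for $\nabla w$).}
\]
Under the assumption $p > N+4$ these integrals converge even in the worst case $z_0 \in \partial B_1(0)$ where $\theta(\sqrt[4]{\tau},z_0) = \sqrt[4]{\tau}$; the resulting power of $r$ cancels precisely with the scaling prefactor coming from $\|f\|_{L^p}$, giving the clean pointwise bound in terms of $\|f\|_{Y(p)}$.

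\emph{Improved interior estimates in part (1).} When $\sqrt{\rho(z_0)} \gg r$, \eqref{8c} gives $\rho \sim \theta(r,z_0)^2$ on $Q_r^d(z_0)$, and \eqref{8d} shows that the intrinsic cylinder is equivalent to a Euclidean parabolic cylinder of spatial size $r\,\theta(r,z_0)$ and time size $r^4$. Freezing the top-order coefficient in $\L_{\sigma}^2 + n\L_{\sigma}$ produces the constant-coefficient operator $\theta(r,z_0)^4\,\Delta^2$ up to lower-order perturbations. Rescaling to unit size, applying the $L^p$ Calder\'on--Zygmund theory of Proposition~\ref{Prop3} and undoing the rescaling replaces the $r^4$-prefactor of Lemma~\ref{Lemma11} by $(r\theta(r,z_0))^2$ for $\nabla^2 w$ and by $r^2\theta(r,z_0)^2$ for $\nabla^3 w$, which is the claimed improvement.

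\emph{Large-time bounds in part (2).} For $t \in Q(T)$ and $f$ supported in $\widehat Q(T)$, split the Duhamel integral at $s = t-1$. On the long range $s \in (T/4, t-1)$, Proposition~\ref{Prop2}(5) gives $|G(t-s,z_0,z') - |B_1(0)|_{\sigma}^{-1}| \lesssim e^{-\lambda_1(t-s)}$; the constant part contributes $\lesssim \|f\|_{L^1(\widehat Q(T))}$, which H\"older's inequality combined with the large-time term in the $Y(p)$-norm bounds uniformly by $\|f\|_{Y(p)}$, while the exponentially decaying remainder is absorbed the same way. On the short range $s \in (t-1, T)$, we repeat the argument of part~(1) at unit scale; the milder threshold $p > 1 + N/2$ is sufficient here because the cylinder has unit spatial and temporal sizes, so the worst-case integrand is $|B_{\sqrt[4]{\tau}}^d|_{\sigma}^{1-p'}$ without the additional $\sqrt[4]{\tau}^{-p'}\theta^{-p'}$ weight for the bound on $w$. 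The gradient bound proceeds analogously using derivatives of $G$ together with Proposition~\ref{Prop2}(5).

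\textbf{Main obstacle.} The technical heart of the argument is the H\"older computation for $\nabla w$ in part~(1) when $z_0$ lies near the boundary $\partial B_1(0)$: there $\theta(\sqrt[4]{\tau},z_0)$ collapses to $\sqrt[4]{\tau}$ and the integrand $\sqrt[4]{\tau}^{-p'}\theta(\sqrt[4]{\tau},z_0)^{-p'}|B_{\sqrt[4]{\tau}}^d(z_0)|_{\sigma}^{1-p'}$ becomes borderline singular at $\tau = 0$, the threshold $p > N+4$ being precisely what ensures integrability and, simultaneously, exact cancellation of all $r$- and $\theta$-prefactors. A secondary bookkeeping challenge appears in part~(1)(b), where one must verify that the freezing-coefficients rescaling argument is valid uniformly in $z_0$ subject to $\sqrt{\rho(z_0)} \gg r$, so that the standard bi-Laplacian $L^p$ estimate can be applied on cylinders of varying anisotropy.
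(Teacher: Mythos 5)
Your overall strategy for the pointwise bounds in part (1) and for part (2) coincides with the paper's: Duhamel's principle (Lemma \ref{Lemma10}), H\"older's inequality against the Gaussian kernel estimate from Proposition \ref{Prop2}(4), a covering argument to bound $\|f\|_{L^p(\widehat Q_r^d(z_0))}$ by the $Y(p)$-norm, and the splitting at $s=t-1$ together with Proposition \ref{Prop2}(5) for large times. That part is fine, up to two technical slips: after absorbing the localizing exponential via the volume formula \eqref{8f}, what survives is $|B_{\sqrt[4]{\tau}}^d(z)|^{1-q}$ in the \emph{unweighted} Lebesgue volume (the $\sigma$-weighted factors cancel against the $|B|_\sigma^{-q}$ coming from the kernel prefactor), not $|B_{\sqrt[4]{\tau}}^d(z_0)|_\sigma^{1-p'}$ as you wrote; and the integrability threshold in the boundary regime $\sqrt{\rho(z_0)}\lesssim r$ for $|\beta|=1$ is $N+2<(2-|\beta|)p$, i.e.\ $p>N+2$, not $p>N+4$. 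The stronger requirement $p>N+4$ originates from the \emph{interior} case below, where you need the condition $(4-|\beta|)p>N+4$ for $|\beta|\le 3$.

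For the interior estimates in part (1), the paper does not freeze coefficients or invoke a local constant-coefficient $L^p$ theory. It simply runs the \emph{same} Duhamel/kernel/H\"older computation for $|\beta|\in\{2,3\}$ and observes that when $\sqrt{\rho(z_0)}\gg r$ the factor $\theta(\sqrt[4]{\tau},z_0)$ stays comparable to $\sqrt{\rho(z_0)}$, so the time integral is $\int_0^{r^4}\sqrt[4]{\tau}^{\,-|\beta|q-N(q-1)}d\tau$ and converges precisely under $(4-|\beta|)p>N+4$; the pointwise bound then trivially dominates the $L^p$ average over $Q_r^d(z_0)$. Your rescaling/freezing route could in principle be carried out, but it is substantially heavier than what is needed, and your citation is misplaced: Proposition \ref{Prop3} is the global $L^p$ maximal regularity for the degenerate operator on $B_1(0)$, not an $L^p$ estimate for the constant-coefficient bi-Laplacian on a rescaled cylinder. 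You would instead need a separate local constant-coefficient parabolic $L^p$ theory and a careful bookkeeping of the commutator/lower-order errors (including controls on intermediate derivatives $\nabla^2 w, \nabla^3 w$ coming from interpolation plus boundary terms on the cut-off). The paper's observation that the kernel estimate already covers $|\beta|\le 3$ in the interior regime is what makes the proof short; your sketch bypasses this insight and is consequently incomplete as written.
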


\begin{proof} 1.
As a consequence of Lemma \ref{Lemma10} and H\"older's inequality, we have that
\begin{equation}
\label{45}
|\partial_z^{\beta} w(t,z)| \le \left(\int_0^{r^4} \|\partial_z^{\beta} G(\tau,z,\cdot)\|_{L^q_{q\sigma}}^q\, d\tau\right)^{1/q} \|f\|_{L^p},
\end{equation}
where $q$ is such that $1/p+1/q = 1$ and $\beta\in \N_0^N$. The statements thus follow from suitable estimates for the kernel functions. From Proposition \ref{Prop2} we recall that
\begin{equation}
\label{38}
|\partial_z^{\beta}G(\tau,z,z')|\lesssim \sqrt[4]{\tau}^{\,-|\beta|} \theta(\sqrt[4]{\tau},z)^{-|\beta|} |B_{\sqrt[4]{\tau}}^d(z)|_{\sigma}^{-1} e^{-C\left(d(z,z')/\sqrt[4]{\tau}\right)^{4/3}}.
\end{equation}
Let $\left\{B_{j\sqrt[4]{\tau}}^d(z)\right\}_{j\in J}$ be a finite cover of $\overline B_1$. Then
\[
\int e^{-qC\left(d(z,z')/\sqrt[4]{\tau}\right)^{4/3}}\, d\mu_{q\sigma}(z') \le \sum_{j\in J} e^{-qC(j-1)^{4/3}} |B_{j\sqrt[4]{\tau}}^d(z)|_{q\sigma}.
\]
Notice that by the virtue of \eqref{8f},
\[
|B_{j\sqrt[4]{\tau}}^d(z)|_{q\sigma} \lesssim j^{2N} |B_{\sqrt[4]{\tau}}^d(z)|_{q\sigma} \sim j^{2N} |B_{\sqrt[4]{\tau}}^d(z)|^{1-q} |B_{\sqrt[4]{\tau}}^d(z)|_{\sigma}^q,
\]
which in turn implies
\[
 \int e^{-qC\left(d(z,z')/\sqrt[4]{\tau}\right)^{4/3}}\, d\mu_{q\sigma}(z') \lesssim \left(\sum_{j\in J}e^{-q(j-1)^{4/3}} j^{2N}\right) |B_{\sqrt[4]{\tau}}^d(z)|^{1-q} |B_{\sqrt[4]{\tau}}^d(z)|_{\sigma}^q .
\]
The sum is converging and can thus be absorbed in the (suppressed) constant.
We now integrate \eqref{38} over time and space and obtain
\begin{equation}
\label{39}
\int_0^{r^4} \|\partial_z^{\beta} G(\tau,z,\cdot)\|_{L^q_{q\sigma}}^q\, d\tau 
\lesssim \int_0^{r^4} \sqrt[4]{\tau}^{\, -|\beta|q} \theta(\sqrt[4]{\tau},z)^{-|\beta|q} |B_{\sqrt[4]{\tau}}^d(z)|^{1-q}\, d\tau
\end{equation}
for any $z\in \overline B_1$.

First, if $\sqrt{\rho(z)}\lesssim r $, then by \eqref{8b}, estimate \eqref{39} turns into
\begin{align*}
\int_0^{r^4} \|\partial_z^{\beta}G(\tau,z,\cdot)\|_{L_{q\sigma}^q}^q\, d\tau  
&\lesssim \int_0^{r^4} \sqrt[2]{\tau}^{-q|\beta| - (q-1)N}\, d\tau \lesssim r^{4 -2q|\beta| - 2(q-1)N},
\end{align*}
provided that $N+2 <(2-|\beta|)p$, which is consistent with the assumptions in the lemma only if $|\beta|\in \{0,1\}$. It remains to notice that
\[
r^{4 - 2(q-1)N} \lesssim r^{4q}|Q_r^d(z)|^{1-q}
\]
by the virtue of \eqref{8f}. From this and \eqref{45}, we easily derive the first estimate in the first part of the lemma in the case where $\sqrt{\rho(z)} \lesssim r$.

Second, if $\sqrt{\rho(z)}\gg r$, then \eqref{39} becomes
\begin{align*}
\int_0^{r^4} \|\partial_z^{\beta} G(\tau,z,\cdot)\|_{L^q_{q\sigma}}^q\, d\tau &\lesssim \sqrt{\rho(z)}^{\, -|\beta|q -(q-1)N} \int_0^{r^4} \sqrt[4]{\tau}^{-|\beta|q -N(q-1)}\, d\tau\\
& \lesssim \sqrt{\rho(z)}^{\, -|\beta|q -(q-1)N} r^{4 - |\beta|q -(q-1)N},
\end{align*}
provided that $N+4<(4-|\beta|)p$, which is consistent with the assumptions only if $|\beta|\in \{0,1,2,3\}$. Now we notice that
\[
\sqrt{\rho(z)}^{\, -|\beta|q -(q-1)N} r^{4 - |\beta|q -(q-1)N} \lesssim r^{(4-|\beta|)q}\theta^{-|\beta|q} |Q_r^d(z)|^{1-q},
\]
using \eqref{8f} again. It is not difficult to see that the latter estimates in combination with \eqref{45} imply remaining estimates in the first part of the lemma.

2. By Duhamel's principle in Lemma \ref{Lemma10} and the fact that $f$ is concentrated on $\widehat Q(T)$, we have for any $(t,z)\in Q(T)$ and $\beta\in \N_0^N$ that
\begin{align*}
|\partial_z^{\beta}w(t,z)| & \le \int_{\frac{T}4}^{t-1} \int |\partial_z^{\beta}G(t-t',z,z')||f(t',z')|\, d\mu_{\sigma}(z')dt'\\
& \quad +  \int_{t-1}^{t} \int |\partial_z^{\beta}G(t-t',z,z')||f(t',z')|\, d\mu_{\sigma}(z')dt',
\end{align*}
with the convention that the first integral is zero if $\frac{T}4\ge t-1$. If it is non-zero, we use   Proposition \ref{Prop2}(5) and estimate the latter   by
\[
\int_{\frac{T}4}^{t-1}\int |f|\, d\mu_{\sigma} dt' \lesssim T^{1-\frac1p} \|f\|_{L^p(\widehat Q(T))}\le T \|f\|_{L^p(\widehat Q(T))}.
\]
Similarly, applying the same strategy as in part 1 above, we bound the second term by
\[
\|\partial_z^{\beta}G(\cdot,z,\cdot)\|_{L^q((0,1);L^q_{q\sigma})} \|f\|_{L^p(\widehat Q(T))} \lesssim \|f\|_{L^p(\widehat Q(T))}.
\]
The statement thus follows by choosing $|\beta|\in \{0,1\}$.
 \end{proof}

We need some estimates for the off-diagonal parts.

\begin{lemma}\label{Lemma14}
\begin{enumerate}
\item Suppose that $\spt f\subset \left[0,r^4\right)\times \overline{B_1(0)}\setminus \widehat Q_{r}^d(z_0)$ for some $z_0\in \overline{B_1(0)}$ and $0<r\lesssim1 $. Then it holds for any   $p\in (1,\infty)$ that
\[
\|w\|_{L^{\infty}(Q_r^d(z_0))} +\|\grad w\|_{L^{\infty}(Q_r^d(z_0))} +\sum_{(\ell,k,|\beta|)\in \E} \frac{r^{4k+|\beta|}}{\theta(r,z_0)^{2\ell - |\beta|}} \|\rho^{\ell} \partial_t^k\partial_z^{\beta}  w \|_{L^p(Q_r^d(z_0))} \lesssim \|f\|_{Y(p)}.
\]
\item Suppose that $\spt f\subset \left[\frac12,\frac{T}4\right]\times \overline{B_1(0)}$ for some $T\ge 2$. Then it holds for any $p\in(1,\infty)$ that
\[
\|w\|_{L^{\infty}(Q(T))}+ \|\grad w\|_{L^{\infty}(Q(T))} + \sum_{(\ell,k,|\beta|)\in\E} T\|\rho^{\ell}\partial_t^k\partial_z^{\beta}  w\|_{L^p(Q(T))} \lesssim \|f\|_{Y(p)}.
\]
\end{enumerate}
\end{lemma}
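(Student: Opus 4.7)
Both assertions will follow from Duhamel's principle (Lemma~\ref{Lemma10}) combined with the Gaussian kernel bounds of Proposition~\ref{Prop2}, exploiting the separation between the support of $f$ and the cylinder where $w$ is estimated. The plan is to exchange the lack of direct $L^p \to L^p$ mapping of the singular integral operator (which has already been done in Proposition~\ref{Prop3}) for the off-diagonal setting, where the exponential Gaussian decay allows Hölder's inequality and a dyadic decomposition to do the work.

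For part (1), I would split the support of $f$ into a time-far piece $[0,r^4/4) \times \overline{B_1(0)}$ and a space-far piece $[r^4/4,r^4) \times (\overline{B_1(0)} \setminus B_{2r}^d(z_0))$. In the time-far piece, for any $(t,z) \in Q_r^d(z_0)$ the separation $t-t' \gtrsim r^4$ places the kernel at the native scale $(t-t')^{1/4} \sim r$ of the Gaussian estimate, which supplies exactly the weight $r^{-(4k+|\beta|)}\theta(r,z_0)^{-|\beta|}|B_r^d(z_0)|_{\sigma}^{-1}$ after invoking \eqref{8b}--\eqref{8g}. In the space-far piece, I would dyadically partition $\overline{B_1(0)} \setminus B_{2r}^d(z_0)$ into intrinsic annuli $A_j = B_{2^{j+1}r}^d(z_0) \setminus B_{2^j r}^d(z_0)$, cover each $A_j$ by a bounded number of cylinders $Q_{2^j r}^d(z_{\alpha,j})$, and apply Hölder with exponents $(p,q)$ to obtain
\[
|\rho(z)^\ell \partial_t^k\partial_z^\beta w(t,z)| \lesssim \sum_j \bigl\|\rho^\ell \partial_t^k \partial_z^\beta G(t-\cdot,z,\cdot)\bigr\|_{L^q}\, \bigl\|f\bigr\|_{L^p(A_j \times [r^4/4,r^4])},
\]
where the first factor is bounded via the Gaussian estimate (using the exponential $\exp(-C(2^j r/(t-t')^{1/4})^{4/3})$ to beat the polynomial volume growth $|A_j|_\sigma \lesssim 2^{jN}|B_r^d(z_0)|_\sigma$), and the second factor is estimated by $\theta(2^jr,z_{\alpha,j})(2^jr)^{-3}|Q_{2^jr}^d(z_{\alpha,j})|^{1/p}\|f\|_{Y(p)}$ via the definition of the $Y(p)$ seminorm. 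Summing the resulting geometric series in $j$ yields the pointwise bound; integrating $p$-th powers over $Q_r^d(z_0)$ gives the $L^p$ estimates for each tuple in $\E$, and specializing to $(k,|\beta|) \in \{(0,0),(0,1)\}$ delivers the $L^\infty$ bounds on $w$ and $\grad w$.

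For part (2), for any $(t,z) \in Q(T)$ and $t' \in [1/2,T/4]$ the separation satisfies $t-t' \ge 3T/4$, putting us in the long-time regime of Proposition~\ref{Prop2}(5). Every tuple in $\E$ carries at least one derivative, so $\partial_t^k\partial_z^\beta G = \partial_t^k\partial_z^\beta(G - |B_1(0)|_\sigma^{-1})$, yielding decay $\lesssim e^{-\lambda_1(t-t')}$. Hölder's inequality in space then gives
\[
|\rho(z)^\ell \partial_t^k \partial_z^\beta w(t,z)| \lesssim \int_{1/2}^{T/4} e^{-\lambda_1(t-t')} \|f(t',\cdot)\|_{L^1_\sigma}\, dt',
\]
and covering $[1/2,T/4]$ by unit-time slabs $Q(T_j)$ with $T_j\|f\|_{L^p(Q(T_j))} \lesssim \|f\|_{Y(p)}$ reduces the problem to summing an exponentially weighted geometric series. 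The $L^\infty$ bound on $\grad w$ is obtained identically (the gradient kills the constant); for $w$ itself one must split into the spatial mean $\avint w\, d\mu_\sigma$, which by self-adjointness of $\L_\sigma$ and integration by parts solves the ODE $\partial_t \avint w\, d\mu_\sigma = \avint f\, d\mu_\sigma$ and is trivially controlled by $\|f\|_{Y(p)}$, and the zero-mean remainder, which benefits from the exponential decay.

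The main obstacle I foresee is the bookkeeping in part (1): matching the factors $r^{4k+|\beta|}$, $\theta(r,z_0)^{2\ell-|\beta|}$, and the volume $|B_r^d(z_0)|_\sigma$ appearing in the target norm against the weights produced by the Gaussian kernel and the $Y(p)$ norm in each dyadic shell, keeping track of the distinction $\sqrt{\rho(z_0)} \lesssim r$ versus $\sqrt{\rho(z_0)} \gg r$ via \eqref{8b}--\eqref{8c}, and verifying that for every Calderón--Zygmund exponent $(\ell,k,|\beta|) \in \E$ the dyadic sum converges uniformly in the base point $z_0$ and radius $r$.
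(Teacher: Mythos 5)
Your overall strategy --- Duhamel plus the Gaussian bounds, trading the singular-integral theory for exponential decay in the far field --- is the right one and matches the paper in spirit. But the way you propose to extract $\|f\|_{Y(p)}$ from the far-field pieces in part~(1) breaks down. Recall that the $Y(p)$ seminorm only sees $f$ through cylinders of the very specific form $Q_s^d(\zeta)=(\tfrac12 s^4,s^4)\times B_s^d(\zeta)$, whose temporal extent is pinned to the spatial scale $s$. Your annulus decomposition covers $A_j\times[r^4/4,r^4)$ by cylinders $Q_{2^jr}^d(z_{\alpha,j})$; but for $j\ge1$ the time window $(\tfrac12(2^jr)^4,(2^jr)^4)$ lies entirely above $r^4$, so these cylinders are disjoint from the support of the piece of $f$ you are trying to estimate, and the bound $\|f\|_{L^p(A_j\times[r^4/4,r^4))}\lesssim \theta(2^jr,\cdot)(2^jr)^{-3}|Q_{2^jr}^d|^{1/p}\|f\|_{Y(p)}$ does not follow. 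The same issue is latent in your ``time-far piece'' $[0,r^4/4)\times\overline{B_1(0)}$: to control $\int_0^{r^4/4}\int|f|$ by the $Y(p)$ norm you again need cylinders with time windows fitting inside $(0,r^4/4)$, hence at spatial scales $\lesssim r$ (in fact \emph{decreasing} scales), not increasing ones. You flag ``bookkeeping'' as the obstacle, but this is more than bookkeeping: it is the step that actually makes contact with the norm.

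What the paper does instead (and what you are missing) is to reduce everything to a single fixed spatial scale $r$ first: the elementary estimate \eqref{40} shows that the kernel weight $\sqrt[4]{t-t'}^{\,-\theta}e^{-C(d/\sqrt[4]{t-t'})^{4/3}}$ may be replaced by $r^{-\theta}e^{-\tilde C(d/r)^{4/3}}$ on the entire off-diagonal support (this single inequality handles both your time-far and space-far cases). After that, $\overline{B_1(0)}$ is covered at scale $r$ and the exponential factor $e^{-\tilde C(d(z,z_i)/r)^{4/3}}$ is summed. Finally --- and this is the key point you omit --- for each fixed $r$-ball $B_r^d(\tilde z)$, the slab $(0,r^4)\times B_r^d(\tilde z)$ is covered by a Vitali family of cylinders $Q_{r_j}^d(z_{ij})$ at \emph{geometrically decreasing} radii $r_j$, precisely so that the shrinking time windows $(\tfrac12 r_j^4, r_j^4)$ together tile $(0,r^4)$ while the controlled overlap \eqref{41} keeps the count under control; H\"older and \eqref{8f} then convert these into a convergent series bounded by $\|f\|_{Y(p)}$. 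Your proof needs this multi-scale covering step (or an equivalent substitute) before it can close. Part~(2) of your proposal is closer to the paper, although the claim that the spatial mean is ``trivially controlled by $\|f\|_{Y(p)}$'' via the ODE $\partial_t\avint w\,d\mu_\sigma=\avint f\,d\mu_\sigma$ still requires a dyadic-in-time argument to turn the $L^1$-in-time integral of $f$ into the $Y(p)$ norm, which is exactly the computation the paper performs directly on $|w(t,z)|$.
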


\begin{proof}
1. We begin our proof with a helpful elementary estimates: If $\theta$ and $C$ are given positive constants, then there exists a new constant $\tilde C$ such that
\begin{equation}
\label{40}
\sqrt[4]{t-t'}^{\,-\theta} e^{-C\left(d(z,z')/\sqrt[4]
{t-t'}\right)^{4/3}} \lesssim r^{-\theta} e^{-\tilde C\left(d(z,z')/r\right)^{4/3}}
\end{equation}
for all $(t,z)\in   Q_r^d(z_0)$ and $(t',z') \in 
\left[0,r^4\right)\times \overline B_1\setminus \widehat Q_r^d(z_0)$. The argument for \eqref{40} runs as follows: To simplify the notation slightly, we write $\tau = t-t'$ and $d = d(z,z')$. If $z'\in B_{2r}^d(z_0)$, then necessarily $t'\not \in \left(r^4/4,r^4\right)$, and therefore $\tau \gtrsim r^4$. It follows that
\[
\sqrt[4]{\tau}^{\, -\theta} e^{-C\left(d/\sqrt[4]{\tau}\right)^{4/3}} \le \sqrt[4]{\tau} \lesssim r^{-\theta} e^{-C(d/r)^{4/3}},
\]
because $d(z,z') \le d(z,z_0) + d(z_0,z') \le 3r$. Otherwise, if $z'\not \in B_{2r}^d(z_0)$, it holds that $2r\le d(z',z_0)\le d(z,z') + r$, and thus $\sqrt[4]{\tau}\lesssim r \lesssim d$. Using the fact that $\tau\mapsto \sqrt[4]{\tau}^{\, -\theta} e^{-C\left(d/\sqrt[4]{\tau}\right)^{4/3}} $ is increasing for $0<\tau \lesssim d^4$, we then estimate
\[
\sqrt[4]{\tau}^{\,-\theta} e^{-C\left(d/\sqrt[4]{\tau}\right)^{4/3}} \lesssim r^{-\theta} e^{-\tilde C\left(d/r\right)^{4/3}}.
\]
This completes the proof of \eqref{40}.

In the following, $C$ will be a uniform constant whose value may change from line to line.

Because $f=0$ in $ Q_r^d(z_0)$, Duhamel's principle (Lemma \ref{Lemma10}) and the Gaussian estimates from Proposition \ref{Prop2} imply that
\[
\begin{aligned}
\MoveEqLeft |\partial_t^k\partial_z^{\beta} w(t,z)|\\
&\lesssim \int_0^t \int \frac{\sqrt[4]{\tau}^{\, -4k-|\beta|}\theta(\sqrt[4]{\tau},z)^{-|\beta|} }{|B_{\sqrt[4]{\tau}}^d(z)|_{\sigma}} e^{-C\left(d(z,z')/\sqrt[4]{\tau}\right)^{4/3}} |f(t-\tau,z')|\, d\mu_{\sigma}(z')d\tau\\
&\lesssim \int_0^{r^4} \int \left(\frac{\sqrt[4]{\tau}}{\sqrt[4]{\tau} + \sqrt{\rho(z)}}\right)^{|\beta| + N + 2\sigma }\\
&\qquad \times  \sqrt[4]{\tau}^{\, -4k-2|\beta| -2N -2\sigma}  e^{-C\left(d(z,z')/\sqrt[4]{\tau}\right)^{4/3}} |f(t-\tau,z')|\, d\mu_{\sigma}(z')d\tau.
\end{aligned}
\]
As a consequence of \eqref{40}, Remark \ref{Remark1} and the monotonicity of the function $s\mapsto \frac{s}{s+c}$ for any fixed $c>0$, we may substitute any $\sqrt[4]{\tau}$ by $r$ and find
\[
\begin{aligned}
\MoveEqLeft \frac{r^{4k+|\beta|-1}}{\theta(r,z)^{2\ell -|\beta|+1}}|\rho(z)^{\ell}\partial_t^k\partial_z^{\beta} w(t,z)|\\
& \lesssim \frac{1}{|B_r^d(z)|_{\sigma}} 
 \int_0^{r^4}\int  e^{-C\left(d(z,z')/r\right)^{4/3}}r^{-1} \theta(r,z')^{-1}|f(t',z')|\, d\mu_{\sigma}(z')dt'.
\end{aligned}
\]
We consider now a finite  family of balls $\left\{B_r^d(z_0)\right\}_{i\in I}$ covering $\overline{B_1(0)}$. Since $d(z,z_i)\le d(z,z') + r$ for any  $z'\in B_r^d(z_i)$ and
\[
\sum_{i\in I} e^{-C\left(d(z,z_i)/r\right)^{4/3}}<\infty
\]
uniformly in $r$ and $z$, we further estimate the right-hand side of the last inequality by
\begin{equation}
\label{42}
 \sup_{\tilde z\in B_1} \frac1{|B_r^d(\tilde z)|_{\sigma}} \int_0^{r^4}\int_{B^d_r(\tilde z)} r^{-1} \theta(r,z')^{-1}|f(t',z')|\, d\mu_{\sigma}(z')dt'.
\end{equation}
We claim that this term is controlled by $\|f\|_{Y(p)}$.  To see this, we fix $\tilde z\in B_1(0)$ and let $r_j = \sqrt{2/3}^j r$. Applying a non-Euclidean version of Vitali's covering lemma, cf.\ Lemma 2.2.2 in \cite{Koch99}, we find a finite family of balls $\{B_{r_j}^d(z_{ij})\}_{i\in I_j}$ covering $B_r^d(\tilde z)$ and such that
\begin{equation}
\label{41}
\sum_{i\in I_j }|B_{r_j}^d(z_{ij})|_{\sigma} \lesssim |B_r^d(\tilde z)|_{\sigma}
\end{equation}
uniformly in $j$, $r$, and $\tilde z$. Then $\left(0,r^4\right)\times B_r^d(\tilde z)$ is contained in the countable  union $ \bigcup_{j\in \N_0}\bigcup_{i\in  I_j} Q_{r_j}^d(z_{ij})$. Invoking H\"older's inequality we thus find
\[
\begin{aligned}
\MoveEqLeft \int_0^{r^4} \int_{B_r^d(\tilde z)} r^{-1}\theta(r,z')^{-1}|f(t',z')|\, d\mu_{\sigma}(z')dt'\\
& \le \sum_{j\in\N_0}\sum_{i\in I_j} \sqrt{2/3}^j r_j^{-1} \|\theta(r,\tacka)^{-1}\|_{L^q_{q\sigma}(Q_{r_j}^d(z_{ij}))} \|f\|_{L^p(Q_{r_j}^d(z_{ij}))},
\end{aligned}
\]
where, as usual, $1/p+1/q=1$. Notice that $\mu_{q\sigma}$ is a finite measure for any $q\in(1,\infty)$. From \eqref{8b} and \eqref{8c} we deduce that $\theta(r_j,z_{ij}) \sim \theta(r_j,z') \le \theta(r,z')$ for any $z'\in B_j^d(z_{ij})$, and thus, as a consequence of \eqref{8f},
\[
\|\theta(r,\tacka)^{-1}\|_{L^q_{q\sigma}(Q_{r_j}^d(z_{ij}))} \lesssim r^4_j \theta(r_j,z_{ij})^{-1} |Q_{r_j}^d(z_{ij})|^{-\frac1p} |B_{r_j}^d(z_{ij})|_{\sigma}.
\]
Combining the previous two estimates, using \eqref{41} and the convergence of the geometric series finally yields that the term in \eqref{42} bounded by $\|f\|_{Y(p)}$. We have thus proved that
\[
\frac{r^{4k+|\beta|-1}}{\theta(r,z)^{2\ell -|\beta|+1}}|\rho(z)^{\ell}\partial_t^k\partial_z^{\beta} w(t,z)|
\lesssim \|f\|_{Y(p)}.
\]
We easily deduce the statement of the lemma.

2. To prove the second statement, we use Lemma \ref{Lemma10} and Proposition \ref{Prop2}(5) to estimate
\[
|\partial_t^k\partial_z^{\beta} w(t,z)| \lesssim \int_{\frac12}^{\frac{T}4} e^{-(t-t')\lambda_1} \int |f(t',z')|\, d\mu_{\sigma}(z')dt'
\]
for any $(t,z)\in Q(T)$ and any $k\in\N_0$ and $\beta\in \N_0^N$ with $k+|\beta|\ge1$. Let $M\in \Z$ be such that $2^M\le \frac{T}4< 2^{M+1}$. We then split and compute 
\begin{align*}
|\partial_t^k\partial_z^{\beta}  w(t,z)| &\lesssim \sum_{m=0}^M \int_{2^{m-1}}^{2^m} e^{-(t-t')\lambda_1}\int |f|\, d\mu_{\sigma}dt
+ \int_{\frac{T}8}^{\frac{T}4} e^{-(t-t')\lambda_1}\int |f|\, d\mu_{\sigma}dt'\\
&\lesssim \sum_{m=0}^M e^{-(t-2^m)\lambda_1} \|f\|_{L^p(Q(2^m))} + e^{-\frac{T}4\lambda_1}\|f\|_{L^p(Q(T/4))}\\
&\lesssim e^{-\frac{T}4 \lambda_1} \|f\|_{Y(p)}.
\end{align*}
We easily infer all estimates but the uniform bound on $w$. To gain control on $\|w\|_{L^{\infty}}$, we argue similarly and get
\[
|w(t,z)| \lesssim \sum_{m=0}^{M+1} \int_{2^{m-1}}^{2^m} \int |f|\, d\mu_{\sigma}dt' \lesssim \left(\sum_{m=0}^{M+1} \frac1{(2^p)^m}\right) \|f\|_{Y(p)}.
\]
The desired estimate follows from the convergence of the geometric series.
\end{proof}

A combination of the results in this subsection yields the maximal regularity estimate for the inhomogeneous problem \eqref{32}.

\begin{prop}\label{Prop4}
Suppose that $p>N+4$. Let $w$ be a solution to the homogeneous problem \eqref{32}. Then
\[
\|w\|_{L^{\infty}(W^{1,\infty})} + \|w\|_{X(p)} \lesssim \|f\|_{Y(p)}.
\]
\end{prop}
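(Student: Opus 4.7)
The proof combines the three preceding lemmas via a ``near--far'' decomposition of the source, using the linearity of \eqref{32}. Throughout, the terms in $\|w\|_{X(p)}+\|w\|_{L^\infty(W^{1,\infty})}$ split naturally into a small-scale portion, indexed by $(z_0,r)$ with $z_0\in\overline{B_1(0)}$ and $0<r\le 1$, and a large-time portion indexed by $T\ge 1$. Because the linear equation is a linear functional of the source and initial datum, it suffices to estimate each group of terms after a suitable splitting of $f$.

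\textbf{Small-scale portion.} Fix $(\ell,k,|\beta|)\in\E$, $z_0\in\overline{B_1(0)}$, and $0<r\le 1$, and write $f=f_1+f_2$ with $f_1:=f\chi_{\widehat Q_r^d(z_0)}$, so that $w=w_1+w_2$ by linearity and $\|f_i\|_{Y(p)}\le\|f\|_{Y(p)}$ by monotonicity. For $w_2$ every required $L^p$ local bound as well as the $L^\infty$ bounds on $w$ and $\grad w$ in $Q_r^d(z_0)$ come immediately from Lemma~\ref{Lemma14}(1). For $w_1$ I would distinguish the two geometric regimes. When $\sqrt{\rho(z_0)}\lesssim r$ the weight $\theta(r,z_0)\sim r$ collapses all four $X(p)$ prefactors to a common scaling, and Lemma~\ref{Lemma11}(1) supplies the matching bound uniformly in $(\ell,k,|\beta|)$. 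When $\sqrt{\rho(z_0)}\gg r$ the same lemma still covers the $(0,1,0)$ and $(2,0,4)$ components, while the sharper interior estimates of Lemma~\ref{Lemma13}(1) --- namely the weights $r\theta(r,z_0)$ and $r^2\theta(r,z_0)^2$ --- are exactly what is needed for the $(0,0,2)$ and $(1,0,3)$ components, also yielding the pointwise bounds on $w$ and $\grad w$ at $z_0$ (which is where $p>N+4$ enters, via the Morrey embedding inherent in Lemma~\ref{Lemma13}(1)).

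\textbf{Large-time portion.} For $T\ge 1$ I would use a temporal split $f=f_a+f_b+f_c$ with $f_a:=f\chi_{[0,1/2)}$, $f_b:=f\chi_{[1/2,T/4]}$, $f_c:=f\chi_{\widehat Q(T)}$, and decompose $w=w_a+w_b+w_c$. The piece $w_c$ is controlled on $Q(T)$ by Lemma~\ref{Lemma11}(2) together with Lemma~\ref{Lemma13}(2), and $w_b$ by Lemma~\ref{Lemma14}(2). The remaining $w_a$ is driven by a source supported in $[0,1/2)$ and therefore evolves according to the homogeneous equation on $[1/2,\infty)$ with initial datum $w_a(1/2)$ whose $W^{1,\infty}$ norm is already controlled by the small-scale analysis at unit radius; Proposition~\ref{Prop1} together with the exponential decay of Lemma~\ref{Lemma9} then converts this into the required $T$-weighted bounds over $Q(T)$.

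\textbf{Anticipated difficulty.} Each individual step relies on estimates already established in the preceding subsection, so the substantive difficulty is organizational: one must verify that the scale-dependent prefactors $r^{4k+|\beta|-1}\theta(r,z_0)^{-(2\ell-|\beta|+1)}$ appearing in the definition of $X(p)$ are matched exactly by the weights produced by Lemmas~\ref{Lemma11}, \ref{Lemma13}, and \ref{Lemma14} for each of the four Calder\'on--Zygmund exponents in $\E$ and in both regimes $\sqrt{\rho(z_0)}\lesssim r$ and $\sqrt{\rho(z_0)}\gg r$. Once this book-keeping is carried out, adding the contributions from $w_1$, $w_2$ (respectively $w_a$, $w_b$, $w_c$) and taking suprema over $(z_0,r)$ and $T$ concludes the proof.
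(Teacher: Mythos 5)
Your proposal is correct and follows the same near/far decomposition strategy that the paper uses, with more of the book-keeping actually made explicit (the paper's own proof is a two-sentence sketch that defers the details to \cite{Seis15b}). The one small technical deviation is that you use a sharp (indicator-function) spatial cutoff $f_1=f\chi_{\widehat Q_r^d(z_0)}$ rather than the paper's smooth cutoff $\eta f$; this is actually slightly cleaner here, since Lemmas \ref{Lemma11}(1) and \ref{Lemma14}(1) only impose support conditions, and the characteristic-function splitting makes them exactly complementary (for a smooth cutoff, $(1-\eta)f$ is supported only off $Q_r^d(z_0)$, not off $\widehat Q_r^d(z_0)$, so one would have to shift the geometry slightly before invoking Lemma \ref{Lemma14}(1)). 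Two places where you should make the implicit assumptions explicit: when applying Lemma \ref{Lemma14}(1) to $f_2$ you should point out that, by Duhamel, $w$ on $Q_r^d(z_0)\subset(0,r^4)\times B_1(0)$ only sees $f$ on $(0,r^4)$, so one may freely truncate $f_2$ in time to $[0,r^4)$; and in the large-time block, when controlling $w_a(1/2)$ in $W^{1,\infty}$, one must split $f_a=f\chi_{[0,1/2)}$ once more around $\widehat Q_r^d(z_0)$ with $r^4\approx 1/2$ since $f_a$ is not supported inside that cylinder, so both Lemma \ref{Lemma13}(1) and Lemma \ref{Lemma14}(1) are needed there, not just the former. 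Your regime distinction $\sqrt{\rho(z_0)}\lesssim r$ versus $\gg r$ and the assignment of exponents $(0,1,0),(2,0,4)$ to Lemma \ref{Lemma11}(1) and $(0,0,2),(1,0,3)$ to Lemma \ref{Lemma13}(1) in the far-from-boundary case match the exact power counting required by the $X(p)$ prefactors, and the identification of where $p>N+4$ enters is right.
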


\begin{proof}
The statement follows immediately from Lemmas \ref{Lemma11}, \ref{Lemma13} and \ref{Lemma14} and the superposition principle for linear equations: For small times, we split $f$ into $\eta f +(1-\eta)f$ with $\eta$ being a smooth cut-off function such that $\eta=1$ on $Q_r^d(z_0)$ and $\eta=0$ outside $\widehat Q_r^d(z_0)$ for some arbitrarily fixed $r\lesssim 1 $ and $z_0\in \overline{B_1(0)}$. For large time, we make a hard temporal cut-off by splitting $f$ into $\chi f + (1-\chi)f$, where $\chi $ is the characteristic function on $\widehat Q(T)$. Notice that to estimate the large times, it is enough to study such $f$'s that are zero in the initial time interval $(0,1/2)$. For details, we refer to \cite{Seis15b}.
\end{proof}

\section{The nonlinear problem}\label{S:Nonlinear}

Our goal is this section is the derivation of Theorems \ref{T1} and \ref{T2}. 
The existence of a unique solution to the nonlinear problem is a consequence of a fixed point argument. We need the following lemma:

\begin{lemma}\label{Lemma15}
Let $w_1$ and $w_2$ be two functions satisfying 
\begin{equation}
\label{44}
\|w_i\|_{L^{\infty}(W^{1,\infty})} + \|w_i\|_{X(p)} \le \eps,\quad i=1,2,
\end{equation}
for some small $\eps>0$, then
\[
\|f[w_1] - f[w_2]\|_{Y(p)} \lesssim \eps \left(\|w_1-w_2\|_{L^{\infty}(W^{1,\infty})} + \|w_1-w_2\|_{X(p)}\right).
\]
\end{lemma}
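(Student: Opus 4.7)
The plan is to exploit the multilinear structure of $f[w]=f^1[w]+f^2[w]+f^3[w]$. Each summand is a $\star$-product of a polynomial in $z$, a rational factor $R_i[w]$, possibly a power of $\rho$, and a product of derivatives $\grad^{m_\nu} w$ of total order four in which at least two factors involve $w$. First I would telescope the difference,
\[
f[w_1]-f[w_2]=\sum_\alpha T_\alpha[w_1,w_2],
\]
where each $T_\alpha$ is obtained from a monomial of $f^j$ by replacing exactly one $w$-dependent factor with the corresponding difference (e.g.\ $\grad^{m_\nu}(w_1-w_2)$ or $R_i[w_1]-R_i[w_2]$) and evaluating the remaining factors at $w_1$ or $w_2$. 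For the rational blocks, the smallness $\|w_\ell\|_{W^{1,\infty}}\le\eps\ll1$ keeps $(1+w+z\cdot\grad w)^{k+i}$ uniformly away from zero, so that $R_i$ is a smooth, Lipschitz function of $(w,\grad w)$ near the origin and
\[
\|R_i[w_\ell]\|_{L^\infty}\lesssim 1,\qquad \|R_i[w_1]-R_i[w_2]\|_{L^\infty}\lesssim \|w_1-w_2\|_{L^\infty(W^{1,\infty})}.
\]

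On an arbitrary intrinsic cylinder $Q_r^d(z_0)$ I would then apply H\"older's inequality to each $T_\alpha$, placing the low-order factors $w,\grad w$ (or their differences) in $L^\infty$, where they are bounded by $\eps$ or by $\|w_1-w_2\|_{L^\infty(W^{1,\infty})}$, and the single highest-order factor in $L^p(Q_r^d(z_0))$, where it is controlled using the $X(p)$-seminorms read off from $(\ell,k,|\beta|)\in\E$,
\[
|Q_r^d(z_0)|^{-\frac1p}\|\grad^2 w\|_{L^p}\lesssim \tfrac{1}{r\theta(r,z_0)}\|w\|_{X(p)},\quad |Q_r^d(z_0)|^{-\frac1p}\|\rho\grad^3 w\|_{L^p}\lesssim \tfrac{1}{r^2}\|w\|_{X(p)},
\]
\[
|Q_r^d(z_0)|^{-\frac1p}\|\rho^2\grad^4 w\|_{L^p}\lesssim \tfrac{\theta(r,z_0)}{r^3}\|w\|_{X(p)}.
\]
The cubic-in-derivatives monomials in $f^3$ of the form $\rho^2(\grad^2 w)^{3\star}$ and $\rho^2\grad w\star\grad^2 w\star\grad^3 w$ require an intermediate $L^\infty$-bound on $\grad^2 w$, which is supplied by the Morrey-type embedding implicit in the $X(p)$-norm for $p>N+4$ (the same ingredient that underlies Lemma~\ref{Lemma13}). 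Since each $T_\alpha$ retains at least one non-difference factor of $w$ or $\grad w$, an extra factor $\eps$ is always available.

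The main obstacle is the combinatorial bookkeeping of weights: one has to verify, monomial by monomial in $f^1,f^2,f^3$, that the powers of $\rho$ explicitly present in the formula, the $\rho^\ell$ weights embedded in the $X(p)$-control, and the target prefactor $r^3/\theta(r,z_0)$ from the $Y(p)$-seminorm combine with matching exponents, any leftover positive power of $r\le1$ or $\theta\le1$ being harmless. This verification is streamlined by the fact that $\E$ was chosen precisely to mirror the intrinsic parabolic scaling of $\L_\sigma^2+N\L_\sigma$, which each monomial in $f^j$ inherits by construction. The long-time part of the $Y(p)$-norm is handled identically, with $r^3/\theta$ replaced by the prefactor $T$ and the $X(p)$-estimates $T\|\rho^\ell\partial_t^k\partial_z^\beta w\|_{L^p(Q(T))}\lesssim\|w\|_{X(p)}$ taking the place of the local ones. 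Summing over $\alpha$ and the finitely many monomial types yields the claimed bound.
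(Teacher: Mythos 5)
Your telescoping strategy and the treatment of the rational blocks $R_i$ (uniform boundedness and Lipschitz continuity in $(w,\nabla w)$ for $\|w\|_{W^{1,\infty}}\ll1$) are exactly the paper's approach, and your three weighted-$L^p$ estimates read off from the $X(p)$-seminorm are correct. The gap is in how you handle the cubic-in-derivatives monomials of $f^3$.

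You propose to place a single highest-order factor in $L^p$ and absorb the remaining $\nabla^2 w$ and $\nabla^3 w$ factors in $L^\infty$, claiming that an $L^\infty$-bound on $\nabla^2 w$ follows from the Morrey embedding "implicit in the $X(p)$-norm" and underlying Lemma~\ref{Lemma13}. This does not work. The $X(p)$-seminorm controls $\|\nabla^2 w\|_{L^p(Q_r^d(z_0))}$ only with the degenerating prefactor $|Q_r^d(z_0)|^{-1/p}\|\nabla^2 w\|_{L^p}\lesssim (r\theta(r,z_0))^{-1}\|w\|_{X(p)}$. Feeding this into the Morrey inequality \eqref{47c} produces a bound that blows up like $(r\theta)^{-1}$ near the free boundary; $\nabla^2 w$ is simply not in $L^\infty$ for $w\in \bar X(p)$. (Lemma~\ref{Lemma13} gives $L^\infty$ only for $w$ and $\nabla w$, not for $\nabla^2 w$.) Consequently a monomial such as $\rho^2(\nabla^2 w)^{3\star}$ or $\rho^2\,\nabla w\star\nabla^2 w\star\nabla^3 w$ cannot be estimated by the naive $L^\infty\times L^\infty\times L^p$ split you propose.

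The paper instead invokes a Gagliardo--Nirenberg-type interpolation (quoted from Appendix~A of \cite{Seis15b}),
\[
\|\nabla^i\zeta\|_{L^r_\sigma}^m \lesssim \|\zeta\|_{L^\infty}^{m-i}\,\|\nabla^m\zeta\|_{L^p_\sigma}^i,\qquad mp=ir,\ i<m,
\]
applied with $\zeta=\eta\nabla w$ for an intrinsic cut-off $\eta$. This trades the three factors of $\nabla^2 w$ for two powers of $\|\nabla w\|_{L^\infty}$ (bounded by $\eps$) and one power of the top-order $L^p$-quantity $\|\rho^2\nabla^4 w\|_{L^p}$ plus cut-off commutator terms — precisely the combination that the $Y(p)$-weight $r^3/\theta$ can absorb. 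The same interpolation supplies the missing $\eps$-factor for the difference terms. You would need to replace your Morrey step with this interpolation to make the argument close; without it, the $(\nabla^2 w)^{3\star}$ and $\nabla w\star\nabla^2 w\star\nabla^3 w$ contributions are not controlled.
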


\begin{proof}
For notational convenience, we write $f^j_i = f^j[w_i]$ for any $i\in\{1,2\}$ and $j\in\{1,2,3\}$. We will also just write $w$ instead of $w_1$ or $w_2$ if the index doesn't matter. We remark that by the virtue of \eqref{44}, it holds that
\[
|R_k[w_1]-R_k[w_2]|\lesssim \|w_1-w_2\|_{L^{\infty}(W^{1,\infty})}
\]
and 
\[
|R_k[w]|\lesssim 1
\]
of any value of $k$.

The estimates of the differences of the $f^j_i$ is very similar. We focus on the leading order terms, i.e., $f^3_1-f_2^3$. Using \eqref{44} and the previous bounds on the $R_k$'s, we first notice that
\begin{align*}
|f_1^3-f_2^3| &\lesssim \rho^2 \|w_1-w_2\|_{L^{\infty}(W^{1,\infty})} \left(|\grad^2 w|^3 + |\grad^2 w||\grad^3 w| + |\grad w||\grad^4 w|\right)\\
&\quad +\rho^2  |\grad^2 w_1-\grad^2 w_2| \left(|\grad^2 w|^2 + |\grad w||\grad^3 w|\right)\\
&\quad+ \rho^2 |\grad^3 w_1-\grad^3 w_2| |\grad w||\grad^2 w| + \rho^2 |\grad^4 w_1 - \grad^4 w_2||\grad w|^2.
\end{align*}
The control of the individual terms is derived very similarly. There are a few obvious cases, for instance the last term, which is simply controlled by using \eqref{44}:
\[
\|\rho^2 |\grad^4 w_1 - \grad^4 w_2||\grad w|^2\|_{Y(p)} \le \|w\|_{L^{\infty}(W^{1,\infty})}^2 \|w_1-w_2\|_{X(p)} \le \eps \|w_1-w_2\|_{X(p)}.
\]
For most of the remaining terms, we have to make use of the following interpolation inequality
\[
\|\grad^i \zeta\|_{L^r_{\sigma}}^m \lesssim \|\zeta\|_{L^{\infty}}^{m-i} \|\grad^m\zeta\|_{L^p_{\sigma}}^i,
\]
provided that $ mp = ir$ for some integers $i<m$,
which has been proved in Appendix A of \cite{Seis15b}. For instance, setting $\zeta =\eta \grad w$ for some smooth cut-off function $\eta$ satisfying $\eta=1$ in $B_r^d(z_0)$ and $\eta = 0$ outside $B_{2r}^d(z_0)$, we have that
\[
\|\rho^2 |\grad^2 w|^3\|_{L^p(B_r^d(z_0))}  = \|\grad^2 w\|_{L^{3p}_{2p}(B_r^d(z_0))}^3 \le \|\grad \zeta\|_{L_{2p}^{3p}}^3.
\]
Applying the above interpolation inequality and using the fact that $\eta$ varies on the scale $r \theta(r,z_0)$ and $\rho \lesssim \theta(r,z_0)^2$ in $B_{2r}^d(z_0)$ (see \eqref{8b} and \eqref{8c}), we then get
\begin{align*}
\|\rho^2 |\grad^2 w|^3\|_{L^p(B_r^d(z_0))} &\lesssim \|\grad w\|_{L^{\infty}}^2\left(\|\rho^2 \grad^4 \|_{L^p(B_{2r}^d(z_0))} + \frac{\theta}{r} \|\rho \grad^3 w\|_{L^p(B_{2r}^d(z_0))}\right.\\
&\quad \quad \left. + \ \frac{\theta^2}{r^2}\| \grad^2 w\|_{L^p(B_{2r}^d(z_0))} + \frac{\theta}{r^3} \| \grad w\|_{L^p(B_{2r}^d(z_0))}\right),
\end{align*}
where $\theta = \theta(r,z_0)$. Integrating in time over $\left(\frac{r^4}2,r^4\right)$, multiplying by $r^3/\theta$ and using \eqref{44} then yields
\[
\sup_{r,z_0} \frac{r^3}{\theta} |Q_r^d(z_0)|^{-\frac1p} \|\rho^2 |\grad^2 w|^3\|_{L^p(Q _r^d(z_0))} \lesssim \eps \|w\|_{X(p)}.
\]
This type of estimate can be used, for instance, to bound the first term in the above estimate for $f_1^3-f_2^3$ for small times. The remaining terms and the large time parts of the $Y(p)$ norm can be controlled in a similar way. 
\end{proof}

We are now in the position to prove Theorems \ref{T1} and  \ref{T2}.

\begin{proof}[Proof of Theorem \ref{T1} and \ref{T2}]
 To simplify the notation in the following, we denote by $\bar X(p)$ the intersection $X(p)\cap L^{\infty}(W^{1,\infty})$ and set $\|\cdot\|_{\bar X(p)} = \|\cdot \|_{X(p)} + \|\cdot \|_{L^{\infty}(W^{1,\infty})}$. 
Let $\eps$ and $\eps_0$ be two positive constants. We denote by $M_{\eps}$ the set of all functions $w$ in $\bar X(p)$ such that $\|w\|_{\bar X(p)}\le \eps$ and by $N_{\eps_0}$ the set of all functions $g$ such that $\|g\|_{W^{1,\infty}}\le \eps_0$. We divide the proof into several steps.

\emph{Step 1. Existence and uniqueness.}
For $w\in M_{\eps}$ and $g\in N_{\eps_0}$ given, we denote by $\tilde w := I(w,g)$ the unique solution to the linear problem \eqref{32} with inhomogeneity $f = f[w]$. By Theorem \ref{Thm1}, we have the estimate $\|\tilde w\|_{\bar X(p)}\lesssim \|f[w]\|_{Y(p)} + \|g\|_{W^{1,\infty}}$. Applying Lemma \ref{Lemma15} with $w_1=w$ and $w_2=0$ and using the assumptions on $w$ and $g$, we find that $\|\tilde w\|_{\bar X(p)} \le C(\eps^2 +\eps_0)$ for some positive constant $C$. We choose $\eps$ and $\eps_0$ small enough so that $C\eps^2 \le \frac{\eps}2$ and $C\eps_0\le \frac{\eps}2$, with the consequence that $\tilde w\in M_{\eps}$. This reasoning implies that for any fixed $g\in N_{\eps_0}$, the function $\tilde w(\cdot,g)$ maps the set $M_{\eps}$ into itself. Moreover, given $w_1$ and $w_2$ in $M_{\eps}$, we find by linearity and Lemma \ref{Lemma15} that 
\[
\|I(w_1,g) - I(w_2,g)\|_{\bar X(p)} \lesssim \|f[w_1]-f[w_2]\|_{Y(p)} \lesssim \eps \|w_1-w_2\|_{\bar X(p)}.
\]
Thus, choosing $\eps$ even smaller, if necessary, the previous estimate shows that $I(\cdot ,g)$ is a contraction on $M_{\eps}$. By Banach's fixed point argument, there exists thus a unique $w^*\in M_{\eps}$ such that $w^* =I(w^*,g)$. In particular, $w^*$ solves the nonlinear equation. From the previous choice of $\eps$, we moreover deduce that $
\|w^*\|_{\bar X(p)} \lesssim \|g\|_{W^{1,\infty}}$.

\emph{Step 2. Analytic dependence on initial data.} In order to show that $w^*$ depends analytically on $g$, we will apply the analytic implicit function theorem, cf.~\cite[Theorem 15.3]{Deimling85}. Because the nonlinearity $f=f[w]$ is a rational function of $w$ and $\grad w$, and thus analytic away from its poles, the contraction map $I$ is an analytic function on $M_{\eps}\times N_{\eps_0}$. We consider the map $J : M_{\eps}\times N_{\eps_0}\to M_{\eps}$ defined by $J(w,g) = w - I(w,g)$. Because $I$ is analytic, so is $J$. It holds that $I(0,0)=0$ and $D_w I(0,0)  = \id$. From the analytic implicit function theorem we deduce the existence of two constants $\tilde \eps <\eps$ and $\tilde \eps_0<\eps_0$ and of an analytic map $A :N_{\tilde \eps_0}\to M_{\tilde \eps}$ with $A(0)=0$ and such that $J(w,g) = 0$ if and only if $A(g)=w$. From the uniqueness of the fixed point and the definition of $J$  we then conclude that the map $g\mapsto w^*$ is analytic from $N_{\tilde \eps_0}$ to $M_{\tilde \eps}$. 

\emph{Step 3. Analytic dependence on time and tangential coordinates.} Let us now change from Euclidean to spherical coordinates. For $z = (z_1,\dots, z_N)^T\in \overline B_1(0)$, we find radius $s\in [0,1]$ and an angle vector $\phi = (\phi_1,\dots,\phi_{N-1})^T\in \A_{N-1} := [0,\pi]^{N-2}\times [0,2\pi]$ such that $z_n = s (\prod_{i=1}^{n-1}\sin \phi_i) \cos \phi_n$ for $n\le N-1$ and $z_N = s\prod_{i=1}^{N-1} \sin \phi_i$. By a slight abuse of notation, we write $w(t,z) = w(t,s,\phi)$. For $\lambda\in \R$ and $\psi \in \A_{N-1}$ we define
\[
w_{\lambda,\psi}^*  := w^*\circ \Xi_{\lambda,\psi},\quad\Xi_{\lambda,\psi}(t,s,\phi): = (\lambda t, s, \phi+t\psi).
\]
A short computation reveals that $w_{\lambda,\psi}$ solves the equation
\[
\partial_t w_{\lambda,\psi}^* + \H_{\sigma}w_{\lambda,\psi}^* = f_{\lambda,\psi}[w_{\lambda,\psi}^*],
\]
where
\[
f_{\lambda,\psi}[w]: = \lambda f[w] + (1-\lambda) \H_{\sigma} w + \psi\cdot \grad_{\phi}w, \quad
\H_{\sigma} = \L_{\sigma}^2 + n\L_{\sigma}.
\]
Clearly, $f_{1,0} = f$. Similarly as above, we denote by $I_{\lambda,\psi}(w,g)$ the solution to the linear equation with  inhomogeneity $f_{\lambda,\psi}[w]$ and initial datum $g$. We furthermore set $J_{\lambda,\psi}(w,g)  : = w-I_{\lambda,\psi}(w,g)$. It is obvious that $J_{1,0}(0,0) = 0$ and $D_w J_{1,0}(0,0) = \id$. Applying the analytic implicit function theorem once more, we find positive constants $\delta$, $\hat \eps < \tilde \eps$, $\hat \eps_0<\tilde \eps_0$ and an analytic function $A_{\lambda,\psi}(g) = A(\lambda,\psi,g)$ from $B_{\delta}^{\R}(1)\times B_{\delta}^{\R^{N-1}}(0) \times N_{\hat \eps_0}$ to $M_{\hat \eps}$ such that $J_{\lambda,\psi}(A_{\lambda,\psi}(g),g)  =0$. In particular, the above uniqueness result entails that $A_{\lambda,\psi}(g) = A(g)\circ \Xi_{\lambda,\psi}$. We conclude that $w_{\lambda,\psi}\in \bar X(p)$ depends analytically on $\lambda$ and $\psi$ in a neighborhood of $(1,0)\in \R\times \R^{N-1}$. In particular, there exists a constant $\Lambda$ dependent only on $N$ such that  for any $k\in \N_0$ and $\beta'\in \N_0^{N-1}$, it holds that
\[
\|\partial_{\lambda}^k\partial_{\psi}^{\beta'} \big|_{(\lambda,\psi)=(1,0)} w_{\lambda,\psi} \|_{\bar X(p)} \lesssim \Lambda^{-k-|\beta'|} k!\beta! \|g\|_{W^{1,\infty}}.
\]
It remains to notice that 
\[
\partial_{\lambda}^k\partial_{\psi}^{\beta'} \big|_{(\lambda,\psi)=(1,0)} w_{\lambda,\psi} (t,z) = t^{k+|\beta'|} \partial_t^k\partial_{\phi}^{\beta'} w(t,r,\phi)
\]
to deduce
\begin{equation}
\label{47}
t^{k+|\beta'|} |\partial_t^k\partial_{\phi}^{\beta'} \grad w(t,r,\phi)| \lesssim \Lambda^{-k-|\beta'|}k!\beta'! \|g\|_{W^{1,\infty}}.
\end{equation}

\emph{Step 4. Regularity in transversal direction.}
The derivation of the transversal regularity relies on  the analyticity bounds established above together with the Morrey estimate
\begin{equation}
\label{47c}
\begin{aligned}
\|v\|_{L^{\infty}(Q_r^d(z))} & \lesssim |Q_r^d(z)|_{\sigma}^{-\frac1p} \|v\|_{L^p_{\sigma}(Q_r^d(z))}\\
&\quad + r\theta |Q_r^d(z)|_{\sigma}^{-\frac1p} \|\grad v\|_{L^p_{\sigma}(Q_r^d(z))} + r^4 |Q_r^d(z)|_{\sigma}^{-\frac1p} \|\partial_t v\|_{L^p_{\sigma}(Q_r^d(z))},
 \end{aligned}
 \end{equation}
which holds for any $p>N$ uniformly in $r$ and $z$.
The proof of this estimate proceeds analogously to the Euclidean case, see, e.g., \cite[Chapter 4.5]{EvansGariepy92}. We omit the argument.

In the following discussion, we keep $r$ and $z$ fixed and we set $\theta = \theta(r,z)$. For $b\in \{2,3\}$, we choose $\sigma = (b-1)p$ and apply \eqref{47c} to the effect that
\begin{align*}
\|\grad_{\psi}^{4-b} \partial_s^b w\|_{L^{\infty}(Q_r^d(z))} & \lesssim |Q_r^d(z)|^{-\frac1p}_{(b-1)p} \|\rho^{b-1}\grad_{\psi}^{4-b} \partial_s^b w\|_{L^p(Q_r^d(z))}\\
&\quad + r\theta |Q_r^d(z)|^{-\frac1p}_{(b-1)p} \|\rho^{b-1} \grad_{\psi}^{4-b} \partial_r^b\grad w\|_{L^p(Q_r^d(z))}\\
&\quad + r^4 |Q_r^d(z)|^{-\frac1p}_{(b-1)p} \|\rho^{b-1} \grad_{\psi}^{4-b} \partial_r^b\partial_t w\|_{L^p(Q_r^d(z))}.
\end{align*}
We recall from \eqref{8f} that $|Q_r^d(z)|_{\sigma}\sim \theta^{2\sigma} |Q_r^d(z)|$ and that $\sqrt{\rho(\tilde z)} \lesssim \theta$ for any $\tilde z\in B_r^d(z)$ by the virtue of \eqref{8c}. Therefore,
\begin{align*}
\|\grad_{\psi}^{4-b} \partial_s^b w\|_{L^{\infty}(Q_r^d(z))} & \lesssim \theta^{4-2b} |Q_r^d(z)|^{-\frac1p}  \|\rho^{b-2}\grad_{\psi}^{4-b} \partial_s^b w\|_{L^p(Q_r^d(z))}\\
&\quad + r\theta^{3-2b} |Q_r^d(z)|^{-\frac1p} \|\rho^{b-1} \grad_{\psi}^{4-b} \partial_r^b\grad w\|_{L^p(Q_r^d(z))}\\
&\quad + r^4 \theta^{4-2b} |Q_r^d(z)|^{-\frac1p} \|\rho^{b-2} \grad_{\psi}^{4-b} \partial_r^b\partial_t w\|_{L^p(Q_r^d(z))}.
\end{align*}
With the help of the analyticity estimates \eqref{47}, we easily deduce that
\begin{equation}
\label{47a}
r\theta \|t^2 \grad_{\psi}^2 \partial_s^2 w\|_{L^{\infty}(Q_r^d(z))} + (r\theta)^2 \|t \grad_{\psi}\partial_s^3 w\|_{L^{\infty}(Q_r^d(z))} \lesssim \| g\|_{W^{1,\infty}}.
\end{equation}
An analogous argument yields the corresponding control of the time derivatives, namely
\begin{equation}
\label{47b}
\frac{r^3}{\theta} \|\partial_t w\|_{L^{\infty}(Q_r^d(z))} \lesssim \| g\|_{W^{1,\infty}}.
\end{equation}
In order to deduce control over the fourth order vertical derivatives, we rewrite the nonlinear equation \eqref{50} as
\[
\rho\partial_r^2 (\rho \partial_r^2 w) = f[w] - \partial_t w + \mbox{l.o.t.}.
\]
The terms on the right-hand side are all uniformly controlled thanks to \eqref{47},\eqref{47a} and \eqref{47b}. Similarly, we may write
\[
-\rho^{-1} \partial_r(\rho^2 \partial_r v) = h
\]
for some $h$ such that $t^{\kappa} h\in L^{\infty}$ for some $\kappa>0$, and where $v = -\rho^{-1}\partial_r(\rho^2 \partial_r w)$. This identity can be integrated so that
\[
\partial_r v = \rho^{-2}\int_r^1 \rho h\, d\tilde r.
\]
The expression on the right is differentiable with
\[
\partial_r^2 v = 2\rho^{-3}r\int_r^1\rho h\, d\tilde r -\rho^{-1}h.
\]
We deduce that $\rho t^{\kappa}\partial_r^2 v\in L^{\infty}$ and thus $\rho^2 t^{\kappa} \partial_r^4 w\in L^{\infty}$. 

This argument can be iterated an yields smoothness of $w$.

\end{proof}

\section*{Appendix: Derivation of the transformed equation}
Let us write $z = \Phi_t(x)$. We will first verify that $\Phi$ defines a diffeomorphism. For this purpose, we compute the 
 derivatives of $\Phi$ in terms of $x$ and $v$,
\[
\partial_i\Phi^j = \frac{\delta_{ij}}{(2 v +|x|^2)^{1/2}} - \frac{x_j(\partial_i v + x_i)}{(2v +|x|^2)^{3/2}}.
\]
Recalling the elementary formula $\det(I-a\otimes b) = I- a\cdot b$ for any two vectors $a$ and $b$, we compute that
\[
\det \grad \phi(x) = \frac{2v  -x\cdot \grad v}{(2v+|x|^2)^{\frac{N}2+1}}.
\]
If $v$ is close to the Smyth--Hill solution in the sense that
\[
\|v-v_*\|_{L^{\infty}(\P(v))} + \|\grad v+x\|_{L^{\infty}(\P(v))} \le \epsilon
\]
for some small $\eps$, we find that $2v-x\cdot \grad v \ge 1-3\eps$ and $2v+|x|^2\ge 1-2\eps$, which implies that the Jacobi determinant is finite if $\eps$ is sufficiently small.

Let us express the derivative of $\Phi$ in terms of the new variables $z$ and $w$. Differentiating \eqref{2a} yields
\[
\partial_i v +x_i =(1+ w)\grad  w \cdot \partial_i \Phi = \partial_i  w -\frac{z\cdot \grad  w}{1+ w} (\partial_i v+x_i),
\]
and thus
\[
 \partial_i  v +x_i  = \frac{1+ w}{1+ w+z\cdot \grad  w} \partial_i  w.
\]
Plugging this and \eqref{2a} into the expression for the derivatives of $\Phi$, we find
 \[
\partial_i\Phi^j  = \frac{\delta_{ij}}{1+ w} - \frac{z_j\partial_i  w}{(1+w)(1+ w+z\cdot \grad  w)}.
\]
Under the assumption that $w$ is such that 
\[
\|w\|_{L^{\infty}} + \|\grad w\|_{L^{\infty}} \le \eps
\]
for some small $\eps$, we see by a calculation similar as the one above that $\Phi$ is a diffeomorphism.

We will now compute how the change of variables acts on the confined thin film equation \eqref{3}. For notational convenience, we set
\[
\rho(z)  =\frac12(1-|z|^2),
\]
and $\tilde w  = 1+w$, with the effect that
\begin{equation}
\label{5}
\rho\tilde w^2 = v = \gamma u^{1/2}.
\end{equation}

For an arbitrary function $f=f(z)$, it thus holds that
\begin{equation}
\label{5a}
\partial_i\left(f(\Phi)\right) = \frac{\partial_i f}{\tilde w}  -\frac{(z\cdot \grad f)\partial_i\tilde w}{\tilde w(\tilde w+z\cdot\grad\tilde  w)}.
\end{equation}
Now, differentiating \eqref{5} with respect to $x_i$ yields
\begin{eqnarray*}
\gamma^2\partial_i u  &=& \frac1{\tilde w} \partial_i (\rho^2\ \tilde w^4) - \frac{\partial_i\tilde w}{\tilde w(\tilde w + z\cdot \grad \tilde w)} z\cdot \grad(\rho^2 \tilde w^4)\\
&=& -2\rho\tilde w^3 z_i + 2\frac{\rho \tilde w^3 \partial_i \tilde w}{\tilde w + z\cdot \grad \tilde w}.
\end{eqnarray*}
Differentiating with respect to $x_i$ again, we obtain that
\begin{eqnarray*}
\frac{\gamma^2}2\partial_i^2 u &=& -(\rho-z_i^2)\tilde w^2 + \frac{\tilde w^2}{\tilde w+z\cdot \grad\tilde w} \rho^{-1} \partial_i(\rho^2\partial_i \tilde w)\\
&&\mbox{} -\frac{\tilde w^2}{\tilde w +z\cdot \grad\tilde w} \rho z\cdot\grad\left(\frac{(\partial_i \tilde w)^2}{\tilde w+z\cdot \grad\tilde w}\right)\\
&&\mbox{} + (\rho + |z|^2) \frac{\tilde w^2(\partial_i \tilde w)^2}{(\tilde w +z\cdot \grad\tilde w)^2}.
\end{eqnarray*}
Hence, summing over $i$ and rearranging terms yields
\begin{align*}
\frac{\gamma^2}2\laplace u \times \frac{\tilde w + z\cdot \grad\tilde w}{\tilde w^2}&= (1-(N+2)\rho)(\tilde w + z\cdot \grad\tilde w)  -\L \tilde w \\
&\quad + (1-\rho) \frac{|\grad\tilde w|^2}{\tilde w + z\cdot \grad\tilde w} - \rho z\cdot \grad \left(\frac{|\grad\tilde w|^2}{\tilde w+z\cdot \grad\tilde w}\right).
\end{align*}
 With the help of the $\star$-notation, the (nonlinear) term in the second line of the above identity can be rewritten as
\[
 p\star \sum_{k=1}^2 \frac{(\grad\tilde w)^{(k-1)\star}}{(\tilde w +z\cdot \grad\tilde w)^k} \left((\grad\tilde w)^{2\star} + \rho\grad\tilde w\star \grad^2\tilde w\right).
\]
In what follows, it should become clear why this way or writing drastically simplifies the notation.

With the help of \eqref{5a}, we compute 
\[
\partial_i \left(\left(\frac{\tilde w^2}{\tilde w+z\cdot \grad\tilde w}f\right)(\Phi)\right) = \frac{\tilde w}{\tilde w+z\cdot \grad\tilde w}\left(\partial_i f - z\cdot \grad\left(\frac{\partial_i\tilde w f}{\tilde w+z\cdot\grad\tilde w}\right)\right)
\]
for any function $f=f(z)$, and thus
\[
\begin{aligned}
\MoveEqLeft \frac{\gamma^2}2\left(\partial_i \laplace u-x_i\right) \times \frac{\tilde w + z\cdot \grad \tilde w}{ \tilde w}\\
 & =  - N\partial_i\tilde w -\partial_i\L\tilde w\\
&\quad + p\star \frac{ (\grad \tilde w)^{2\star} +  \grad \tilde w\star \grad^2\tilde w + \rho  (\grad^2\tilde w)^{2\star} + \rho \grad\tilde w\star \grad^3\tilde w}{\tilde w+z\cdot \grad\tilde w}\\
&\quad + p\star \frac{(\grad \tilde w)^{3\star} + (\grad \tilde w)^{2\star}\star\grad^2\tilde w + \rho \grad\tilde w\star(\grad^2\tilde w)^{2\star} +\rho (\grad\tilde w)^{2\star}\star\grad^3\tilde w}{(\tilde w+z\cdot \grad\tilde w)^2}\\
&\quad + p\star\frac{(\grad\tilde w)^{4\star} + (\grad\tilde w)^{3\star}\star \grad^2\tilde w + \rho (\grad \tilde w)^{2\star}\star(\grad^2\tilde w)^{2\star} +\rho (\grad\tilde w)^{3\star}\star\grad^3\tilde w}{(\tilde w+z\cdot \grad\tilde w)^3}\\
&\quad + p\star \frac{(\grad\tilde w)^{5\star} + \rho(\grad\tilde w)^{4\star} \star\grad^2\tilde w + \rho(\grad\tilde w)^{3\star}\star(\grad^2\tilde w)^{2\star}}{(\tilde w+z\cdot \grad\tilde w)^4}.
\end{aligned}
\]
We notice that the nonlinearity belongs to the class
\[
p\star \sum_{k=1}^4 \frac{(\grad \tilde w)^{(k-1)\star}}{(\tilde w +z\cdot \grad\tilde w)^k} \star \left((\grad\tilde w)^{2\star} + \grad\tilde w\star \grad^2\tilde w + \rho(\grad^2\tilde w)^{2\star} +\rho\grad\tilde w\star\grad^3\tilde w\right).
\]

Similarly as above, we compute for an arbitrary function $f=f(z)$ that
\[
\begin{aligned}
\MoveEqLeft \partial_i \left(\left(\frac{\tilde w^5}{\tilde w+z\cdot \grad\tilde w}f\right)(\Phi)\right)\\ &= \frac{\tilde w^4}{\tilde w+z\cdot \grad\tilde w}\left(\partial_i f  + 3 \frac{\partial_i\tilde w f}{\tilde w+z\cdot \grad\tilde w} -z\cdot \grad\left( \frac{\partial_i\tilde w f}{\tilde w+z\cdot\grad\tilde w}\right)\right),
\end{aligned}
\]
and thus
\begin{align*}
\MoveEqLeft \frac{\gamma^4}2\div\left(u\grad\laplace u - ux\right) \times \frac{\tilde w+z\cdot\grad \tilde w}{\rho\tilde w^4} \\
& = (N+\L)\L\tilde w\\
&\quad  + p\star\tilde R_{-1}[\tilde w] \star \left((\grad \tilde w)^{2\star} +  \grad\tilde w\star \grad^2\tilde w \right)\\
&\quad  + p\star \tilde R_{-1}[\tilde w] \star  \rho\left( (\grad^2\tilde w)^{2\star} + \grad\tilde w\star \grad^3\tilde w \right)\\
&\quad + p\star \tilde R_{-2}[\tilde w] \star  \rho^2\left( (\grad^2\tilde w)^{3\star}+ \grad\tilde w\star  \grad^2\tilde w \star \grad^3 \tilde w + (\grad\tilde w)^{2\star} \star\grad^4\tilde w\right),
\end{align*}
where $\tilde R_i[\tilde w] = r_i(\grad \tilde w,\tilde w+z\cdot\grad\tilde w)$ for some rational functions $r_i$ that are homogeneous of degree $i$, i.e., $r_i(s a,s b) = s^ir_i(a,b)$.

We finally turn to the computation of the time derivative. For this notice first that
\[
\partial_t \Phi_t(x) = -\frac{\gamma^2}2 \frac{z}{\rho \tilde w^4}\partial_t u,
\]
and thus, a short computation shows that
\[
\frac{\gamma^2}2\partial_t u = \frac{\rho\tilde w^4}{\tilde w+z\cdot \grad \tilde w}{\partial_t \tilde w}.
\]

After a rescaling of time $t\to \gamma^2  t$, and recalling that $\tilde w = 1+w$, we find the transformed equation \eqref{50}.

\section*{Acknowledgement}
The author thanks Herbert Koch for helpful discussions.

\bibliography{tfe_lit}
\bibliographystyle{abbrv}
\end{document}